\newif\ifpictures
\numberwithin{equation}{section}
\DeclareMathOperator{\im}{im}
\DeclareMathOperator{\RE}{Re}
\DeclareMathOperator{\IM}{Im}
\DeclareMathOperator{\res}{Res}
\DeclareMathOperator{\conv}{conv }
\DeclareMathOperator{\Log}{Log}
\DeclareMathOperator{\New}{New}
\DeclareMathOperator{\trop}{Trop}
\DeclareMathOperator{\ord}{ord}
\DeclareMathOperator{\eq}{eq}
\DeclareMathOperator{\app}{app}
\newcommand{\eps}{\varepsilon}
\newcommand{\alp}{\alpha}
\newcommand{\lam}{\lambda}
\newcommand{\sig}{\sigma}
\newcommand{\Sig}{\Sigma}
\newcommand{\lf}{\left}
\newcommand{\ri}{\right}
\newcommand{\ra}{\rightarrow}
\newcommand{\Ra}{\Rightarrow}
\newcommand{\Lera}{\Leftrightarrow}
\newcommand{\La}{\Leftarrow}
\newcommand{\w}{\wedge}
\newcommand{\wh}{\widehat}
\newcommand{\ti}{\tilde}
\newcommand{\bs}{\backslash}
\newcommand{\ovl}{\overline}
\newcommand{\lan}{\langle}
\newcommand{\ran}{\rangle}
\newcommand\fC{{\ensuremath{\mathbb{C}}}\xspace}
\newcommand\fN{{\ensuremath{\mathbb{N}}}\xspace}
\newcommand\fQ{{\ensuremath{\mathbb{Q}}}\xspace}
\newcommand\fR{{\ensuremath{\mathbb{R}}}\xspace}
\newcommand\fZ{{\ensuremath{\mathbb{Z}}}\xspace}
\newcommand\cA{{\ensuremath{\mathcal{A}}}\xspace}
\newcommand\cB{{\ensuremath{\mathcal{B}}}\xspace}
\newcommand\cC{{\ensuremath{\mathcal{C}}}\xspace}
\newcommand\cE{{\ensuremath{\mathcal{E}}}\xspace}
\newcommand\cF{{\ensuremath{\mathcal{F}}}\xspace}
\newcommand\cL{{\ensuremath{\mathcal{L}}}\xspace}
\newcommand\cP{{\ensuremath{\mathcal{P}}}\xspace}
\newcommand\cS{{\ensuremath{\mathcal{S}}}\xspace}
\newcommand\cT{{\ensuremath{\mathcal{T}}}\xspace}
\newcommand\cV{{\ensuremath{\mathcal{V}}}\xspace}
\numberwithin{equation}{section}
\newtheorem{satz}{Theorem}[section]
\newtheorem{lem}[satz]{Lemma}
\newtheorem{cor}[satz]{Corollary}
\newtheorem{thm}[satz]{Theorem}
\newtheorem{prop}[satz]{Proposition}
\newtheorem*{satz*}{Theorem}
\newtheorem*{lem*}{Lemma}
\newtheorem*{cor*}{Corollary}
\newtheorem*{thm*}{Theorem}
\newtheorem*{prop*}{Proposition}
\newtheorem*{rem*}{Remark}
\theoremstyle{definition}
\newtheorem{defn}[satz]{Definition}
\newtheorem*{defn*}{Definition}
\newtheorem{exa}[satz]{Example}
\newtheorem*{exa*}{Example}
\newcommand{\Fa}[1][\mathbf{w},f]{\cF_{#1}}
\newcommand{\PDe}{\cP_{\Delta}}
\newcommand{\PD}[1][y]{\PDe^{#1}}
\title{Amoebas of genus at most one}
\author{Thorsten Theobald}
\author{Timo de Wolff}
\address{Goethe-Universit\"at, FB 12 -- Institut f\"ur Mathematik,
Postfach 11 19 32, D--60054 Frankfurt am Main, Germany}
\email{\{theobald,wolff\}@math.uni-frankfurt.de}
\thanks{Research supported by DFG grant TH 1333/2-1.}
\subjclass[2010]{14M25, 14Q10, 14T05, 52B20}
\keywords{Amoebas, genus~1, space of amoebas, lopsidedness, $A$-discriminants}
\begin{document}

\begin{abstract}

The amoeba of a Laurent polynomial $f \in \fC[z_1^{\pm 1}, \ldots, z_n^{\pm 1}]$ is the
image of its zero set $\mathcal{V}(f)$ under the log-absolute-value map. Understanding
the space of amoebas (i.e., the decomposition of
the space of all polynomials, say, with given support or 
Newton polytope, with regard to the existing complement components)
is a widely open problem.

In this paper we investigate the class
of polynomials $f$ whose Newton polytope $\New(f)$ is a simplex and whose 
support $A$ contains exactly one point in
the interior of $\New(f)$. Amoebas of polynomials in this class may have at 
most one bounded complement component.
We provide various results on the space of these amoebas.
In particular, we give upper and lower bounds in terms of the coefficients of 
$f$ for the existence of this complement component and show that the upper 
bound 
becomes sharp under some extremal condition. We establish connections
from our bounds to Purbhoo's lopsidedness 
criterion and to the theory of $A$-discriminants.

Finally, we provide a complete classification of the space of amoebas for the case that the exponent of the inner monomial is the barycenter of the simplex Newton polytope. 
In particular, we show that the set of all polynomials with amoebas of genus 1 is path-connected in the corresponding space of amoebas, which proves a special case of the question on connectivity (for general Newton polytopes) stated by H. Rullg{\aa}rd.

\end{abstract}

\maketitle

\begin{center}
	\textit{Dedicated to Mikael Passare (1959 -- 2011)}
\end{center}
\section{Introduction}
Given a complex Laurent polynomial
$f \in \fC[\mathbf{z}^{\pm 1}] = \fC[z_1^{\pm 1},\ldots,z_n^{\pm 1}]$
the amoeba $\cA(f)$ (introduced by Gel$'$fand, Kapranov, and
Zelevinsky \cite{GelKapZel})
is the image of its variety $\cV(f)$ under the $\log$-absolute-value map
\begin{equation}
\label{eq:amoebadef}
	\Log|\cdot| : \lf(\fC^*\ri)^n \ra \fR^n, \ (z_1,\ldots,z_n) \mapsto (\log|z_1|, \ldots, \log|z_n|) \, ,
\end{equation}
where $\cV(f)$ is considered as a subset of the algebraic torus 
$ \lf(\fC^*\ri)^n = \lf( \fC \setminus \{0\} \ri)^n$. 
Amoebas occur in and have rich connections to various fields of mathematics
(including complex analysis \cite{FoPaTsi1},
the topology of real algebraic curves \cite{Mikh2},
discriminants and hypergeometric 
functions \cite{Nil1,NiPa1}, 
or dynamical systems \cite{EiLiMiWa1}) and in particular form
a cornerstone of tropical geometry (see, e.g., 
\cite{maclagan-sturmfels,Mikh1,PaTsi1}).

By Forsberg, Passare, and Tsikh \cite{FoPaTsi1}, $\cA(f)$ has finitely 
many complement components
whose orders (as introduced in Section~\ref{SubSecAmoebas})
map injectively to the integer points in the 
Newton polytope $\New(f)$ (i.e., the convex hull of the exponents of $f$).
For $\alpha \in \New(f) \cap \fZ^n$ let $E_{\alpha}(f)$ be the (possibly empty) complement
component with order $\alpha$.
Only very little is known concerning the existence and characterization
of the complement components $E_{\alp}(f) \subset \fR^n$  with orders $\alp$
in terms of the coefficients of $f$ (see Section~\ref{SubSecAmoebas}
for some known properties), and thus understanding the
space of amoebas is a widely open field. 
For amoebas of linear polynomials an explicit characterization exists (see \cite{FoPaTsi1}). 
Since in this case there does not exist a bounded complement component
those amoebas are particular instances of amoebas of genus~0. Note that for amoebas of genus 0 all recession cones of complement components can be described explicitly (see \cite[pp. 195-197]{GelKapZel}).

As a step towards better understanding the structure of amoebas of general,
nonlinear varieties, we study a class of polynomials whose amoebas can
have at most one bounded complement component.
For a full-dimensional lattice simplex $\Delta \subset \fR^n$, let
$\PDe$ denote the class of all Laurent polynomials 
with Newton polytope $\Delta$.
Let $\alp(0),\ldots,\alp(n) \in \fZ^n$ be the vertices of an 
$n$-simplex $\Delta$ and $y \in \fZ^n$ be contained in the interior of $\Delta$. Then let
$\PD \subset \PDe$ denote the class of Laurent polynomials of the form
\begin{equation}
	f \ = \ b_0 \cdot \mathbf{z}^{\alp(0)} + b_1 \cdot \mathbf{z}^{\alp(1)} + \cdots + b_n \cdot \mathbf{z}^{\alp(n)} + c \cdot \mathbf{z}^y \, , \quad b_i \in \fC^*, c \in \fC.\label{EquStdPolyn}
\end{equation}
Since $b_0 \in \fC^*$ and $\cV(f) \subset (\fC^*)^n$ we can assume that $\alp(0)$ is the origin and $b_0 = 1$ (otherwise divide $f$ by $b_0 \cdot \mathbf{z}^{\alp(0)}$). Polynomials in $\PD$ have exactly $n+2$ monomials. Note that we do not require that $\# ( \Delta \cap \fZ^n) = n+2$, since the simplex $\Delta$ may contain further lattice points 
as long as the corresponding coefficients are 0. 
For general background on lattice point simplices (with one inner lattice point) see \cite{Averkov, Reznik}), and we remark that $f$ can be regarded as
supported on a circuit (an affinely dependent set whose proper subsets are affinely independent; see, e.g., \cite{bihan-07,prt-2011}).
As explained in Section \ref{SubSecSpine}, $\cA(f)$ can have at most one bounded complement
component and thus there are only two possible homotopy types for $\cA(f)$. 

Our goal is to characterize the  space of the amoebas
of the class of polynomials $\PD$.
After reviewing various properties of amoebas in Section~\ref{SecPreliminaries},
in Section~\ref{SecEquilibPoints} we provide bounds on the coefficients
for the existence and non-existence of the inner complement component.
These bounds -- which are stated in Theorem~\ref{ThmRoughBounds} --
are based on investigating the equilibrium points (as defined
in Definition~\ref{DefEquilibriumPoint}).
We remark that, as a special case, Theorem \ref{ThmRoughBounds} implies 
that maximally sparse polynomials with simplex Newton polytope have 
solid amoebas (Corollary \ref{KorMaximallysparse}); see Nisse \cite{Nisse3}
for a treatment on the solidness of amoebas for more general Newton polytopes.

In Section \ref{SecSharpBounds} we study the points
where (for varying value of $|c|$) the complement component 
appears which provides improved coefficient bounds that even become tight in certain
cases.
Our main results are given in Theorems \ref{ThmImprovedLowerBound} and 
\ref{ThmSharpUpperBound}.

In Section \ref{SecLopsidedness} we connect our results
to Purbhoo's lopsidedness criterion \cite{Purb1} and to the theory of
$A$-discriminants (e.g. \cite{GelKapZel}). Lopsidedness provides a
sufficient criterion for membership to the complement of an amoeba,
and based upon this Purbhoo provided a sequence of approximations which
converge to the amoeba. In our situation we can provide an exact
characterization for genus~1 for all arguments of the inner monomial 
in terms of lopsidedness. See
Theorem~\ref{ThmEquivPerforLopsided}.
With regard to $A$-discriminants we show that a polynomial
$f$ in our class has a complement component of order $y$
such that the upper bound from Theorem \ref{ThmSharpUpperBound} becomes 
sharp if and only if its coefficient vector belongs to the 
$A$-discriminant (Corollary~\ref{co:adisc1}).

In Section \ref{SecBarycenter} we restrict to polynomials in $\PD$ with the additional property that the exponent $y$ of the inner monomial is the barycenter of the simplex spanned by 
$\{\alp(0),\ldots,\alp(n)\}$. For this class we can characterize
the space of amoebas completely
and in particular can show that the set of polynomials
whose amoebas has a complement component of order $y$ is path-connected
(Corollary \ref{KorJackpot}). The question whether the set of polynomials
(w.r.t.\ a fixed support set $A$) having a certain complement component is connected
was marked as an open problem by Rullg{\aa}rd \cite{Rull1}
and is still widely open for non-vertices $\alp(i)$ of $\conv A$.

\section{Preliminaries}
\label{SecPreliminaries}

\subsection{Amoebas}
\label{SubSecAmoebas}

Let $A = \{\alp(1),\ldots,\alp(d)\} \subset \fZ^n$ 
and $f = \sum_{i=1}^d b_i \mathbf{z}^{\alpha(i)} \in \fC[\mathbf{z}^{\pm 1}]$.
The amoeba $\cA(f) \subset \fR^n$ as defined in~\eqref{eq:amoebadef}
is a closed set with non-empty complement 
and each complement component of $\cA(f)$ is convex 
(see \cite{FoPaTsi1,GelKapZel}). The \emph{order map} is given by
$\ord: \fR^n \setminus \cA(f) \to \New(f) \cap \fZ^n$,
\begin{equation}
  \label{eq:ordermap}
  \mathbf{w} \ \mapsto \ \frac{1}{(2\pi i)^n} \int_{\Log|\mathbf{z}| = |\mathbf{w}|} \frac{z_j \partial_j f(\mathbf{z})}{f(\mathbf{z})}
    \frac{dz_1 \cdots dz_n}{z_1 \cdots z_n} \, , \quad 1 \le j \le n \, .
\end{equation}
Since points in the same complement component have the same order,
\eqref{eq:ordermap}
induces an injective map from the set of complement components to
$\New(f) \cap \fZ^n$, and thus the notation
$E_{\alpha}(f)$ for $\alpha \in \New(f) \cap \fZ^n$ (as provided
in the Introduction) is well defined. In particular,
the number of complement components of $\cA(f)$ is bounded by the 
number of lattice points in $\New(f)$.

For the vertices $\alp$ of $\New(f)$, the complement component
$E_{\alp}(f)$ is always non-empty (for every choice of the coefficients
of $f$), while the non-emptiness of $E_{\alpha}(f)$ for non-vertices $\alpha$
depends on the choice of the coefficients of $f$
(see \cite{FoPaTsi1}). For any $\alpha \in A$ it is known 
that there exists some polynomial $f$ supported on $A$ for which
the complement component $E_{\alpha}(f)$ is non-empty \cite{Rull1}.

In order to study the space of amoebas, we can 
identify a polynomial $f = \sum_i b_i \mathbf{z}^{\alpha(i)}$ with its coefficient
vector in $\fC^A$. In our case it is useful and relevant to
consider the subset $\fC^{A}_\Diamond$ of $\fC^A$ with $\New(f) = \conv A$.
Note that for $A := \{\alpha(0), \ldots, \alpha(n),y\}$ with 
$\alpha(0), \ldots, \alpha(n)$ the vertices of an $n$-simplex
and $y$ a lattice point in the interior of $\Delta := \conv A$, 
the space $\fC^{A}_{\Diamond}$ is precisely $\PD$.

For $\alpha \in \New(f) \cap \fZ^n$ let 
$U_{\alp}^A = \{f \in \fC^A_{\Diamond} : E_{\alp}(f) \neq \emptyset\}$
be the set of all polynomials in $\fC^{A}_{\Diamond}$ whose amoeba
has a non-empty complement component of order $\alp$.
Note that the map $\fC^A_{\Diamond} \ra \fN, f \mapsto \# \{E_{\alp}(f) \neq \emptyset\}$ is lower semicontinuous and thus the sets $U_{\alpha}^A$ are open sets (see \cite[Prop. 1.2]{FoPaTsi1}, \cite{Rull1}). Furthermore, 
all $U_{\alp}^A$ are non-empty and semialgebraic sets (see \cite{Rull1}).

If one considers the image of a variety under the argument map, rather than
the $\Log|\cdot|$-map, the resulting set is called \emph{coamoeba} and has
recently also attracted attention (see~\cite{NiPa1,
Nisse3,nisse-sottile}).

\subsection{The tropicalization, the spine, and the complement-induced tropicalization}
\label{SubSecSpine}

We introduce four polyhedral complexes which are naturally associated with an amoeba:
the \textit{tropical hypersurface}, the \textit{equilibrium}, the \textit{complement-induced tropical hypersurface} 
and the \textit{spine}.

Recall that the \textit{tropical semiring} $(\fR \cup \{-\infty\},\oplus,\odot)$ is given by the operations $a \oplus b := \max(a,b)$ and  $a \odot b:= a + b$ (where some expositions prefer the minimum instead of the maximum). For a tropical polynomial $h$, the \textit{tropical hypersurface} $\cT(h)$  is the set of points where the maximum is attained at least twice (see, e.g., \cite{Gath1, RGStuTh1}). It is well known that tropical hypersurfaces are polyhedral complexes which are geometrically dual to a subdivision of the Newton polytope of $h$.

Let $f = \sum_{i = 1}^d m_i(\mathbf{z}) = \sum_{i = 1}^m b_i \mathbf{z}^{\alp(i)}$ with terms $m_i$ and coefficients $b_i \in \fC$, and 
$C := \{\alp \in \New(f) \cap \fZ^n \, : \, E_{\alp}(f) \neq \emptyset\}$ 
be the set of orders of the existing complement components. The \textit{tropicalization} of $f$ is the tropical polynomial (say, in the variables $\mathbf{w}$)
\begin{eqnarray*}
		\trop(f)	& =	& \bigoplus_{i = 1}^d \, \log|b_i| \odot \mathbf{w}^{\alp(i)},
	\end{eqnarray*}
and the \textit{complement-induced tropicalization} is
	\begin{eqnarray*}
		\trop(f_{|C})	& =	& \bigoplus_{\alp{(i)} \in C} \log|b_i| \odot \mathbf{w}^{\alp(i)}
	\end{eqnarray*}
(see, e.g., \cite{PR1,PaTsi1,Rull1}).
We set $\cC(f) = \cT(\trop(f_{|C}))$.

Define the \textit{(norm-induced) equilibrium} $\cE(f)$ of $f$ as the following superset of $\cT(\trop(f))$
(and of $\cT(\trop(f_{|C})$),
\begin{equation}
\label{eq:modulareq}
\cE(f) \, = \, \lf\{\mathbf{w} \in \fR^n \, : \,   
  | m_i(\Log^{-1}|\mathbf{w}|) | = | m_j(\Log^{-1}|\mathbf{w}|) | 
  \text{ for some } 1 \le i \neq j \le d \ri\}.
\end{equation}

The Ronkin function
\begin{eqnarray*}
N_f : \fR^n \ra \fR, \qquad \mathbf{w} \ \mapsto \ \frac{1}{(2 \pi i)^n} \int_{\Log^{-1}|\mathbf{w}|} \frac{\log|f(z_1,\ldots,z_n)|}{z_1 \cdots z_n} \ dz_1 \cdots dz_n
\end{eqnarray*}
of a polynomial $f$ is a convex function which is affine linear on the complement components of $\cA(f)$ and can be interpreted as the average value of the fiber $\Log^{-1}|\mathbf{w}|$ (\cite{Ron1}, 
cf.\ \cite{PR1}).
The gradient of $N_f(\mathbf{w})$ for a $\mathbf{w} \in \fR^n \bs \cA(f)$ coincides with the order of the corresponding complement component (\cite{FoPaTsi1}).

By the affine linearity of $N_f(\mathbf{w})$ 
on every $E_{\alp}(f)$, we have for all $\mathbf{w} \in E_{\alp}(f)$ 
that $N_f(\mathbf{w}) = \beta_{\alp} + \lf\langle \alp,\mathbf{w} \ri\rangle$
with Ronkin coefficient
\begin{eqnarray}
	\beta_{\alp}	& =	& \log|b_i| + \RE \lf[\frac{1}{(2\pi i)^n} \int_{\Log^{-1}|0|} \log\lf(\frac{f(\mathbf{z})}{b_i \cdot \mathbf{z}^{\alp}}\ri) \frac{d z_1 \w \ldots \w d z_n}{z_1 \cdots z_n}\ri].\label{EquRonkinCoeff}
\end{eqnarray}
The \textit{spine} $\cS(f)$ of $\cA(f)$ 
is the tropical hypersurface of the tropical polynomial 
$\bigoplus_{\alp \in C} \beta_{\alp} \odot \mathbf{w}^{\alp}$
and is therefore dual to an integral, regular subdivision of $\New(f)$ (cf. \cite{PR1,Rull1}).
 
The spine $\cS(f)$ is a strong deformation retract of the amoeba
$\cA(f)$ (see \cite{PR1}). In general, the complement-induced
tropical hypersurface $\cC(f)$ 
is not a deformation retract of $\cA(f)$. 
However, for a certain rich subclass of Laurent polynomials 
we have (\cite[Theorem 8, p. 33 and the proof of Theorem 12, p. 36]{Rull1}):

\begin{lem}[Rullg{\aa}rd]
Let $f \in \fC[\mathbf{z}^{\pm 1}]$ with at most $2n$ monomials such that for all $k \in \{1,\ldots,n-1\}$ no $k+2$ of its exponent vectors lie in an affine 
$k$-dimensional subspace. Then $\cC(f)$ is a strong deformation 
retract of $\cA(f)$.
\label{LemRullgard}
\end{lem}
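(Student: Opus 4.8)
The plan is to prove the lemma by showing that, under the stated hypotheses, the amoeba $\cA(f)$ and the complex $\cC(f)$ are combinatorially indistinguishable, and then assembling the deformation retraction region by region. Write $A = \{\alp(1),\ldots,\alp(d)\}$ for the support of $f$, let $C \subseteq \New(f)\cap\fZ^n$ be the set of orders of the existing complement components, so that $\fR^n\setminus\cA(f) = \bigsqcup_{\alp\in C} E_{\alp}(f)$ with each $E_{\alp}(f)$ open, convex, and (recall) with recession cone the normal cone $N_{\alp}$ of $\New(f)$ at $\alp$. On the other side, $\cC(f) = \cT(\trop(f_{|C}))$ is dual to the regular subdivision of $\New(f)$ induced by the heights $(\log|b_i|)_{\alp(i)\in C}$, and its complement is a disjoint union of open convex polyhedral regions, one region $D_{\alp}$ for each vertex $\alp$ of that subdivision, with $D_{\alp}$ again having recession cone $N_{\alp}$. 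The goal is a bijection $\alp\mapsto D_{\alp}$ between $C$ and these regions together with the inclusions $E_{\alp}(f)\subseteq\ovl{D_{\alp}}$.

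First I would check that the vertices of the subdivision are exactly the elements of $C$. The vertices of $\New(f)$ are always in $C$ and are vertices of every subdivision, so the issue is only with a non-vertex element $\alp\in C$ (for example an interior lattice point): one must see that $\log|b_i|$ exceeds the value at $\alp$ of the affine function interpolating the neighbouring heights. This is forced, because otherwise $\trop(f_{|C})$ would never attain its maximum at the $\alp$-th term, the region $D_{\alp}$ would be absent, and $\cC(f)$ could not record the complement component $E_{\alp}(f)$ — contradicting that $\cC(f)$ is a deformation retract of $\cA(f)$. (This is the ``lower bound'' type necessary condition for the existence of an inner complement component.) With this, $\alp\mapsto D_{\alp}$ is the desired bijection.

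The heart of the proof, and the only place where the general-position hypothesis (at most $2n$ monomials, and no $k+2$ exponent vectors in an affine $k$-plane for $k\le n-1$) is genuinely needed, is the containment $E_{\alp}(f)\subseteq\ovl{D_{\alp}}$ for all $\alp\in C$, together with $\cC(f)\subseteq\cA(f)$. I would prove this by showing that on $E_{\alp}(f)$ the monomial $b_i\mathbf{z}^{\alp}$ dominates in absolute value each of the other monomials $b_j\mathbf{z}^{\alp(j)}$ with $\alp(j)\in C$, i.e. $\log|b_i|+\lan\alp,\mathbf{w}\ran\ge\log|b_j|+\lan\alp(j),\mathbf{w}\ran$ there; equivalently, the order-$\alp$ component cannot protrude beyond its combinatorial cell. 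The general-position assumption is exactly what rules out the degeneracies in which monomials with affinely dependent exponents conspire to push a complement component past its cell: already for $f=1+z+z^2$ the three collinear exponents produce such an overflow (the order-$1$ region spills out of the corresponding tropical cell), and precisely such polynomials are excluded here. Once the domination is in place, $\mathbf{w}\in E_{\alp}(f)$ forces $\mathbf{w}\in\ovl{D_{\alp}}$; and a point of $\cC(f)$, where at least two $C$-monomials tie for the maximum, cannot lie in any $E_{\alp}(f)$, so it lies in $\cA(f)$.

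Finally I would assemble the retraction. For each $\alp\in C$ the set $\ovl{D_{\alp}}\cap\cA(f)=\ovl{D_{\alp}}\setminus E_{\alp}(f)$ is trapped between the closed convex set $\ovl{D_{\alp}}$ and its convex subset $E_{\alp}(f)$, and the two share the recession cone $N_{\alp}$; an elementary convex-geometric argument — a radial homotopy from a point of $E_{\alp}(f)$ in the bounded case, and more generally a gauge-type homotopy adapted to the common recession cone — gives a strong deformation retraction of $\ovl{D_{\alp}}\setminus E_{\alp}(f)$ onto $\partial D_{\alp}=\ovl{D_{\alp}}\cap\cC(f)$ that fixes $\partial D_{\alp}$ pointwise and never leaves $\ovl{D_{\alp}}\setminus E_{\alp}(f)$. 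Since $\cC(f)$ is closed and is the common boundary of the finitely many regions $D_{\alp}$, and all these homotopies restrict to the identity on $\cC(f)$, they glue by the pasting lemma to a strong deformation retraction of $\cA(f)=\cC(f)\cup\bigsqcup_{\alp\in C}\big(D_{\alp}\setminus E_{\alp}(f)\big)$ onto $\cC(f)$. The main obstacle is the containment step above: once it is established that no complement component overflows its cell, the rest is essentially formal, and the one remaining subtlety — the case in which $\ovl{E_{\alp}(f)}$ touches $\cC(f)$ — needs only slightly more care in the last step and does not affect the conclusion.
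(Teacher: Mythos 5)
The paper does not prove this lemma; it is a black-box citation to Rullg{\aa}rd's thesis (\cite[Theorem 8, p.\ 33 and the proof of Theorem 12, p.\ 36]{Rull1}), so there is no in-paper proof to compare against. Your sketch has the right global shape --- match the complement regions of $\cC(f)$ with the complement components of $\cA(f)$, then retract region by region --- but it has genuine gaps at exactly the two steps that carry the mathematical content, and the ``essentially formal'' assembly at the end is indeed routine only once those are closed.

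\emph{Gap 1 (circularity).} To see that each $\alpha\in C$ is a vertex of the regular subdivision dual to $\cC(f)$, you argue ``otherwise $\cC(f)$ could not record $E_\alpha(f)$ --- contradicting that $\cC(f)$ is a deformation retract of $\cA(f)$.'' That is the conclusion you are trying to prove, so it cannot be invoked at an intermediate step. An independent argument (a quantitative lower bound on $|b_\alpha|$ forced by $\alpha\in C$) is needed here and is not supplied.

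\emph{Gap 2 (the heart, asserted rather than proved).} The containment $E_\alpha(f)\subseteq\ovl{D_\alpha}$, together with $\cC(f)\subseteq\cA(f)$, is by definition the dominance $\log|b_\alpha|+\langle\alpha,\mathbf{w}\rangle\ge\log|b_j|+\langle\alpha(j),\mathbf{w}\rangle$ on $E_\alpha(f)$ for all $\alpha(j)\in C$. Saying you ``would prove this by showing the $\alpha$-monomial dominates'' is therefore a restatement of the goal, not an argument; and the only justification offered for why it holds is that the general-position hypothesis ``rules out the degeneracies,'' with no mechanism for how ``at most $2n$ monomials, no $k+2$ in an affine $k$-plane'' actually enters. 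That is precisely where Rullg{\aa}rd's work lies. Note also that the containment one gets for free from the Passare--Rullg{\aa}rd machinery is $E_\alpha(f)\subseteq$ the spine cell of order $\alpha$, which uses the Ronkin heights $\beta_\alpha$, not $\log|b_\alpha|$; the paper's own Lemma~\ref{LemRelSpinEquilibPerforated}(b) records that the inner cells of $\cS(f)$ and $\cC(f)$ are merely \emph{similar}, not equal, so ``inside the spine cell'' does not automatically give ``inside $\ovl{D_\alpha}$.'' (Incidentally, $f=1+z+z^2$ does not exhibit the overflow you describe: both roots have modulus $1$, so $E_1=\emptyset$ and there is no order-$1$ region to spill out.)

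You have correctly located where the difficulty sits, but the proposal as written does not surmount it.
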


This implies in particular that for all polynomials $f$ in $\PD$ the 
complement-induced tropical hyperplane $\cC(f)$ is a deformation retract of their amoeba $\cA(f)$. Thus there are just two possible homotopy types for polynomials $f$ in $\PD$ since the tropical hypersurface 
$\cC(f)$ is dual to a regular subdivision of the point set $A$ which has, since it is a circuit, only two possible triangulations (see \cite[Chapter 7, p. 217]{GelKapZel}).

Although the spine (or in case of $\PD$ even $\cC(f)$) is a
tropical hypersurface, it is nevertheless difficult to compute 
the homotopy of $\cA(f)$. Both the definitions of $\cS(f)$ and $\cC(f)$ 
depend on $C$ and in general do not depend continuously 
on the coefficients of $f$ (\cite{PR1}). 

\begin{exa} Given $\Delta = \conv \{0,(2,1),(1,2)\}$, $y=(1,1)$, and
$f = 1 + z_1^2z_2 + z_1z_2^2 - 4z_1z_2$ in $\PD$, 
Figure~\ref{fi:bsp1} depicts
 $\cA(f)$, $\cS(f)$, $\cC(f)$ and $\cE(f)$.
  \ifpictures
\begin{figure}[t]
	\begin{center}
		\includegraphics[width=0.48\linewidth]{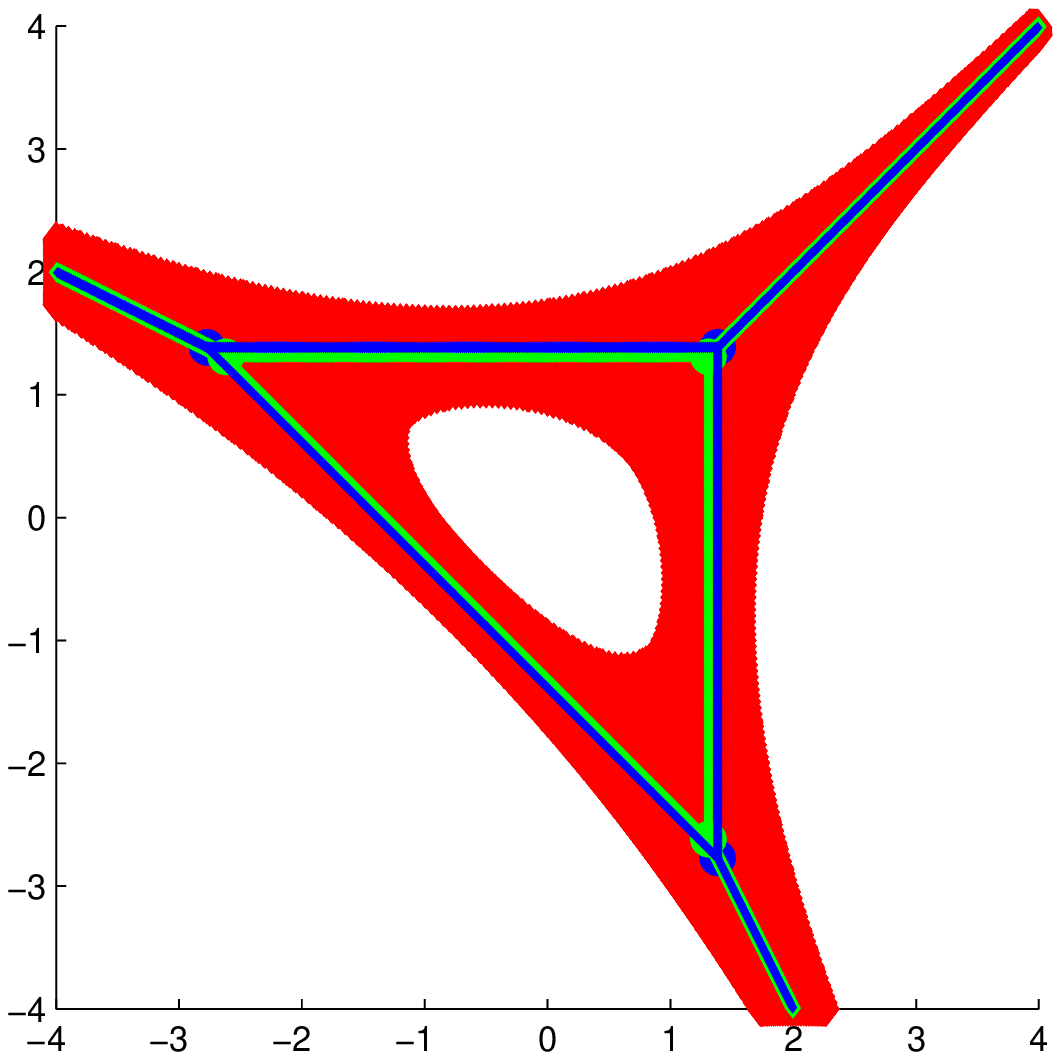}
		\includegraphics[width=0.48\linewidth]{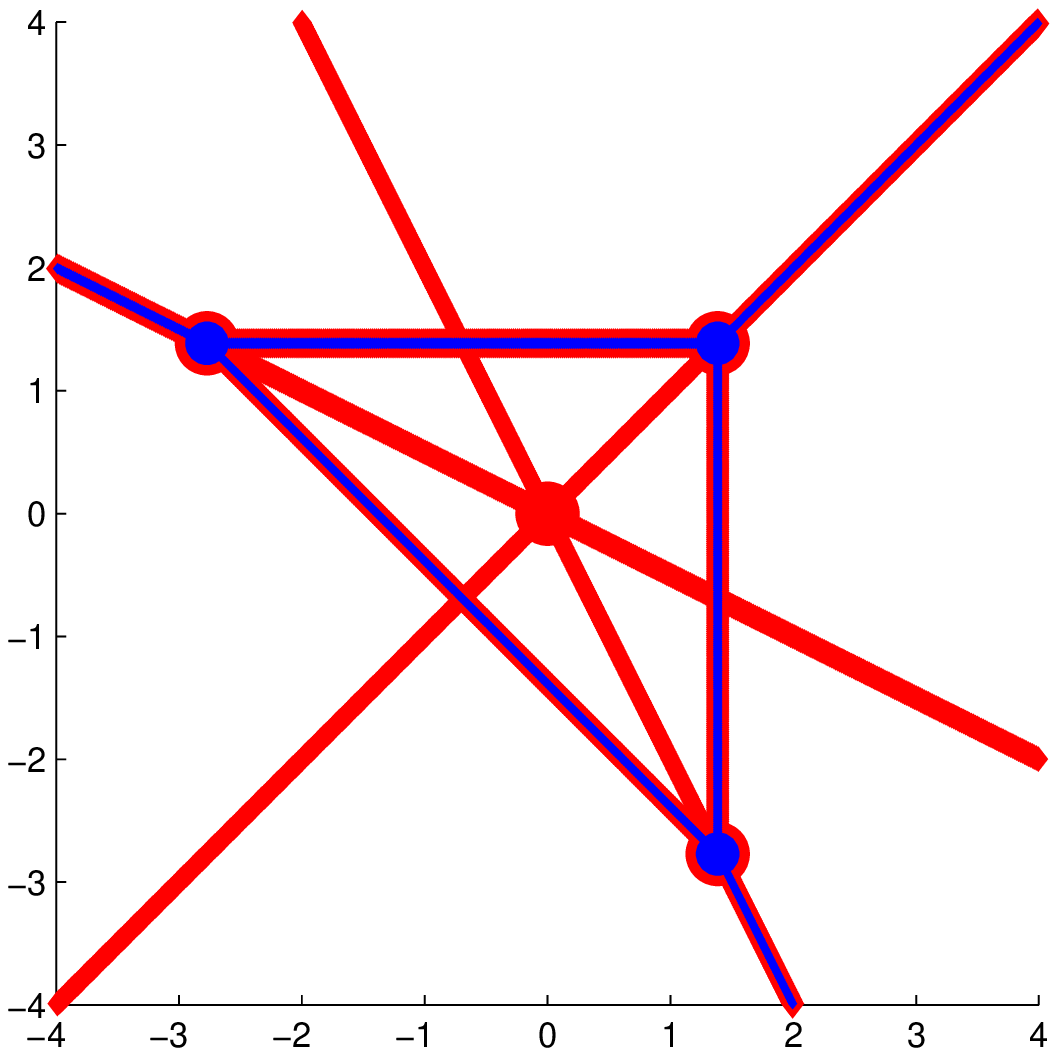}
	\caption{Let $f = 1 + z_1^2z_2 + z_1z_2^2 - 4z_1z_2$. Left picture: the amoeba $\cA(f)$ (red) with the spine $\cS(f)$ (green, light) and the complement-induced tropical hypersurface $\cC(f)$ (blue, dark). Note that on the outer tentacles $\cS(f)$
and $\cC(f)$ coincide.
Right picture: the equilibrium $\cE(f)$ (red) together with $\cC(f)$ (blue, dark). Note that $\cC(f) \subset \cE(f)$. The equilibrium points introduced in Definition \ref{DefEquilibriumPoint} are marked by big red points.}
	\label{fi:bsp1}
	\end{center}
\end{figure}
\fi
\end{exa}

\subsection{Fibers}
\label{SubsecFibers}
Let $f \in \fC\lf[\mathbf{z}^{\pm 1}\ri]$. For our investigations 
the fibers of certain points $\mathbf{w} \in \Log\lf|\lf(\fC^*\ri)^n\ri|$ under the $\Log|\cdot|$-map play a key role. Any such fiber is a real $n$-torus $[0,2\pi)^n$, and $f$ induces a function $\Fa$ 
on the fiber of a $\mathbf{w} \in \fR^n$:
\begin{eqnarray}
	\Fa : [0,2\pi)^n \ra \fC, \quad \phi \mapsto f\lf(e^{w_1} \cdot e^{i \cdot \phi_1},\ldots,e^{w_n} \cdot e^{i \cdot \phi_n}\ri).
\label{EquFiber}
\end{eqnarray}

Notice that a point $\mathbf{w}$ is contained in $\cA(f)$ if and only if there exists some $\phi \in [0,2\pi)^n$ with $\Fa(\phi) = 0$. 

\medskip

\section{Equilibrium points and bounds for the inner complement component}
\label{SecEquilibPoints}
From now on, we study polynomials $f \in \PD$ of the form (\ref{EquStdPolyn}). 
The monomials $b_i \mathbf{z}^{\alp(i)}$ are called 
the \emph{outer monomials} 
and $c \mathbf{z}^y$ is called the \emph{inner monomial}.
These polynomials form a ``simplest'' class
of the polynomials where the characterization of the amoeba becomes ``difficult''.
Since an exact description of the complement components (and, in particular, the homotopy) is not available, one of our main goals is to provide bounds on the coefficients to determine the homotopy type of $\cA(f)$.  In this section, we focus on bounds which are obtained by investigating \emph{equilibrium points} (as introduced in Definition~\ref{DefEquilibriumPoint}).

As a starting point, recall that the complement components of amoebas of linear polynomials are well understood. By Forsberg, Passare and Tsikh \cite[Proposition 4.2]{FoPaTsi1}, for a linear polynomial $f := b_0 + \sum_{i = 1}^{n} b_i  z_i$ and a point $\mathbf{z} \in (\fC^*)^n$, $\Log|\mathbf{z}| \in \fR^n \setminus \cA(f)$ if and only if $|b_0| > \sum_{j = 1}^n \lf|b_j  z_j \ri|$ or $\lf|b_i  z_i \ri| > |b_0| + \sum_{j \neq i} \lf|b_j  z_j \ri|$ for some $i \in \{1,\ldots,n\}$. The following statement captures a slight generalization of this result to Newton polytopes that might contain interior lattice points.

\begin{thm}
Let $f := \sum_{i = 0}^n b_i \mathbf{z}^{\alp(i)}$ such that the convex hull of $\{\alp(0),\ldots,\alp(n)\}$ is an $n$-simplex. For $\mathbf{z} \in (\fC^*)^n$ we have $\Log|\mathbf{z}| \in \fR^n \setminus \cA(f)$ if and only if $\lf|b_i  \mathbf{z}^{\alp(i)}\ri| > \sum_{j \neq i} \lf|b_j  \mathbf{z}^{\alp(j)}\ri|$ for some $i \in \{0,\ldots,n\}$.
\label{ThmMaximallSparseCase}
\end{thm}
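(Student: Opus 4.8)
The plan is to reduce the statement to the linear case already established in \cite[Proposition 4.2]{FoPaTsi1} via a monomial change of coordinates. First I would observe that after dividing $f$ by $b_0 \mathbf{z}^{\alp(0)}$ (legitimate on $(\fC^*)^n$, and it changes neither $\cA(f)$ nor the inequalities in the statement, which are homogeneous in the monomials) we may assume $\alp(0) = 0$ and $b_0 = 1$. The vectors $\alp(1) - \alp(0), \ldots, \alp(n) - \alp(0) = \alp(1), \ldots, \alp(n)$ form a basis of $\fR^n$ since $\conv\{\alp(0),\ldots,\alp(n)\}$ is an $n$-simplex; let $M \in \fZ^{n\times n}$ be the invertible matrix with columns $\alp(i)$. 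This $M$ need not be unimodular, so it does not induce an automorphism of $(\fC^*)^n$, but it does induce a surjective group homomorphism $\psi_M : (\fC^*)^n \to (\fC^*)^n$, $\mathbf{z} \mapsto (\mathbf{z}^{\alp(1)}, \ldots, \mathbf{z}^{\alp(n)})$, whose associated linear map on $\Log|\cdot|$-coordinates is $\mathbf{w} \mapsto M^{\top}\mathbf{w}$, a linear isomorphism of $\fR^n$. Writing $g := 1 + \sum_{i=1}^n b_i u_i$ for the linear polynomial in new variables $\mathbf{u}$, we have $f(\mathbf{z}) = g(\psi_M(\mathbf{z}))$, hence $\cA(f) = (M^{\top})^{-1}\,\cA(g)$, and in particular $\Log|\mathbf{z}| \notin \cA(f)$ if and only if $\Log|\psi_M(\mathbf{z})| = M^{\top}\Log|\mathbf{z}| \notin \cA(g)$.

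Next I would apply the linear characterization to $g$: $M^{\top}\Log|\mathbf{z}| \notin \cA(g)$ if and only if either $1 > \sum_{j=1}^n |b_j u_j|$ or $|b_i u_i| > 1 + \sum_{j\neq i}|b_j u_j|$ for some $i$, where $u_i$ is the $i$-th coordinate of $\psi_M(\mathbf{z})$, i.e. $u_i = \mathbf{z}^{\alp(i)}$. Translating back, $|b_i u_i| = |b_i \mathbf{z}^{\alp(i)}|$ and, using $b_0 = 1$ and $\alp(0)=0$ so that $|b_0 \mathbf{z}^{\alp(0)}| = 1$, the first alternative reads $|b_0 \mathbf{z}^{\alp(0)}| > \sum_{j=1}^n |b_j \mathbf{z}^{\alp(j)}|$ and the $i$-th instance of the second reads $|b_i \mathbf{z}^{\alp(i)}| > \sum_{j\neq i}|b_j \mathbf{z}^{\alp(j)}|$ (the term $1 = |b_0\mathbf{z}^{\alp(0)}|$ now appearing as the $j=0$ summand). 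These $n+1$ conditions are exactly the statement ``$|b_i \mathbf{z}^{\alp(i)}| > \sum_{j\neq i}|b_j \mathbf{z}^{\alp(j)}|$ for some $i \in \{0,\ldots,n\}$'', which completes the proof.

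The only subtlety — and the step I would be most careful about — is that $M$ is generally not unimodular, so $\psi_M$ is not an isomorphism of tori but a finite covering of degree $|\det M|$. This is harmless for the argument: $\psi_M$ is still surjective with $\Log|\psi_M(\mathbf{z})| = M^{\top}\Log|\mathbf{z}|$, the push-forward identity $\cV(f) = \psi_M^{-1}(\cV(g))$ holds, and therefore $\Log|\cV(f)| = (M^{\top})^{-1}\Log|\cV(g)|$, which is all that is needed. (Alternatively one can bypass coordinates entirely and mimic the proof of \cite[Proposition 4.2]{FoPaTsi1} directly, using that the reciprocals of the terms $b_j\mathbf{z}^{\alp(j)}$ behave under $\Log|\cdot|$ just like the linear terms do; but the change-of-variables route is shorter.) One should also note that the stated equivalence is insensitive to lattice points of $\Delta$ other than the $\alp(i)$, since only the $n+1$ monomials actually present in $f$ enter, so no genuine generalization beyond the linear case is occurring — just a coordinate-free repackaging.
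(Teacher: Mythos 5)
Your proof is correct, and it takes a genuinely different route from the paper's. The paper does \emph{not} invoke the linear result \cite[Proposition 4.2]{FoPaTsi1} as a black box; instead it re-runs the argument of that proposition directly in the new setting: it orders the monomials by modulus, finds the break index $m$, observes that $t_1 := \sum_{j<m}|b_j\mathbf{z}^{\alp(j)}|$, $t_2 := |b_m\mathbf{z}^{\alp(m)}|$, $t_3 := \sum_{j>m}|b_j\mathbf{z}^{\alp(j)}|$ satisfy the triangle inequalities, and then uses linear independence of $\alp(1),\ldots,\alp(n)$ to choose a phase vector $\phi$ realizing the corresponding cancellation. Your approach instead packages the linear-independence step once and for all as the monomial homomorphism $\psi_M : (\fC^*)^n \to (\fC^*)^n$, $\mathbf{z}\mapsto(\mathbf{z}^{\alp(1)},\ldots,\mathbf{z}^{\alp(n)})$, observes that $f = g\circ\psi_M$ with $g$ linear, and transports the answer back via $\Log|\psi_M(\mathbf{z})| = M^{\top}\Log|\mathbf{z}|$. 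Your attention to the fact that $\psi_M$ is only a finite covering (surjective, not injective) is exactly the right thing to check; surjectivity is what gives the ``if'' direction of the amoeba identity $\cA(f) = (M^{\top})^{-1}\cA(g)$, and you state this explicitly.

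What each route buys: yours is shorter, cites the linear result rather than reproving it, and makes the change-of-variables structure visible, which is how most readers would want to think about it. The paper's route is self-contained (useful since it re-derives the triangle/phase argument it will echo elsewhere) and, more to the point, lets the authors locate precisely where the method breaks when one adds an interior lattice point: they remark after the proof that the argument fails once the exponent set is no longer affinely independent. Your reduction hits the same wall in essentially the same place — with an extra monomial $c\,\mathbf{z}^y$ the map $\psi_M$ no longer linearizes $f$ — so neither route is strictly more general; they are equivalent in reach. One small point in the paper's favor for exposition: its proof never needs the dangling final sentence about extreme points of the closure (that sentence appears to be vestigial), and your version dispenses with any such loose end automatically since the linear case is assumed settled.
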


The $f$ from Theorem \ref{ThmMaximallSparseCase} are a particular kind of \emph{maximally sparse polynomial}, 
where an arbitrary polynomial $f$ is called \emph{maximally sparse} if 
for all non-vertices $\alpha$ of $\New(f)$ we have $b_i = 0$.

For the convenience of the reader we provide a proof of Theorem \ref{ThmMaximallSparseCase} which is analogous to the proof of statement \cite[Proposition 4.2]{FoPaTsi1}.

\begin{proof}
The direction ``$\La$'' is obvious. For the converse direction let $\mathbf{z} \in \left(\fC^*\right)^n$ with $|b_i \mathbf{z}^{\alp(i)}| \leq \sum_{j \neq i} |b_j \mathbf{z}^{\alp(j)}|$ for all $i \in \{1,\ldots,n\}$. Since the case $n = 1$ is trivial, assume $n \geq 2$. We normalize such that $\alp(0) = 0 \in \fZ^n$ and $\arg(b_0) = 0 \in [0,2\pi)$. 

Order the monomials by norm so that $|b_j  \mathbf{z}^{\alp(j)}| \leq |b_{j+1}  \mathbf{z}^{\alp(j+1)}|$ for $j \in \{0,\ldots,n\}$ and let $m$ denote the largest integer such that $\sum_{j = 0}^{m-1} |b_j  \mathbf{z}^{\alp(j)}| < \sum_{j = m}^n|b_j  \mathbf{z}^{\alp(j)}|$. By choice of $\mathbf{z}$ we have $m < n$. We denote $t_1 := \sum_{j = 0}^{m-1} |b_j  \mathbf{z}^{\alp(j)}|$, $t_2 := |b_m  \mathbf{z}^{\alp(m)}|$ and $t_3 := \sum_{j = m+1}^n |b_j  \mathbf{z}^{\alp(j)}|$. By the choice of $m$ we have $t_1 + t_2 \geq t_3$, $t_1 + t_3 \geq t_2$ and $t_2 + t_3 \geq t_1$. Hence, $t_1,t_2,t_3$ form the edge lengths of a triangle 
and thus there are $\psi_1,\psi_2 \in [0,2\pi)$ with 
\begin{eqnarray*}
	\sum_{j = 0}^{m-1} |b_j  \mathbf{z}^{\alp(j)}| + |b_m  \mathbf{z}^{\alp(m)}| \cdot  e^{i \cdot \psi_1} + \sum_{j = m+1}^{n} |b_j  \mathbf{z}^{\alp(j)}| \cdot e^{i \cdot \psi_2} & = & 0.
\end{eqnarray*}
Since the integer vectors
$\alp(1), \ldots, \alp(n)$ are linearly independent, we can
find $\phi \in [0,2 \pi)^n$ such that 
$\sum_{j = 0}^{n} b_j  |\mathbf{z}|^{\alp(j)} \cdot
e^{i \cdot \lf\langle \alp(i),\phi \ri\rangle} = 0$
and thus $\Log|\mathbf{z}| \in \cA(f)$.

Finally, one can show that all extreme points of the closure of $\cA(f)$ satisfy the required inequalities which we omit here.
\end{proof}

Thus, the class $\PD$ is a natural generalization of maximally sparse polynomials with simplex Newton polytope.
Note that the above proof technique does not extend to 
the case of supports with interior integer points since then
the set of all exponent vectors is not affinely independent.

In the following we often write $f$ as a sum of monomials:
\begin{eqnarray}
	f(\mathbf{z})	& =	& m_0(\mathbf{z}) + m_1(\mathbf{z}) + \cdots + m_n(\mathbf{z}) + m_y(\mathbf{z}) \label{EquStdPolynMonomNotation}
\end{eqnarray}
with each $m_i(\mathbf{z})$ representing the corresponding monomial of $f$ 
in the notation of (\ref{EquStdPolyn}).
By our remarks after Lemma~\ref{LemRullgard} 
there are only two possible homotopy types
for the amoeba of a polynomial $f \in \PD$, and it is useful to
introduce the following equilibrium points related to the 
equilibrium $\cE(f)$ from~\eqref{eq:modulareq}.

\begin{defn}
For $f \in \PD$ of the form~\eqref{EquStdPolynMonomNotation},
let $\eq(y)$ be the point of the equilibrium $\cE(f)$ 
where at least all monomials but $m_y$ have the same norm, i.e., $|m_0(\eq(y))| = \cdots = |m_n(\eq(y))|$.
Similarly, for $0 \le j \le n$ let $\eq(j)$
be the point in $\cE(f)$ where at least all monomials 
but $m_j$ have the same norm. We call $\eq(y), \eq(0), \ldots, 
\eq(n)$, the \emph{(norm-induced) equilibrium points}.
\label{DefEquilibriumPoint}	
\end{defn}

Let $M \in \fZ^{n \times n}$ be the matrix with columns $\alp(1), \ldots, \alp(n)$.

\begin{lem} If $\alpha(0) = 0$ and $b_0 = 1$ then
the equilibrium point $\eq(y) \in \fR^n$ is the unique solution $\mathbf{x} \in \fR^n$ of the system of linear equations $M^t \cdot \mathbf{x} = - \Log|(b_1,\ldots,b_n)^t|$.
\label{LemEquilibPointCoord}
\end{lem}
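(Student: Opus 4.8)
The plan is to unwind the definition of $\eq(y)$ directly. By Definition~\ref{DefEquilibriumPoint}, the point $\eq(y)$ is characterized by the condition that all outer monomials have equal norm there, i.e.\ $|m_0(\eq(y))| = |m_1(\eq(y))| = \cdots = |m_n(\eq(y))|$. Writing $\mathbf{x} = \Log|\mathbf{z}|$ for a point $\mathbf{z}$ in the fiber, we have $|m_i(\mathbf{z})| = |b_i| \cdot e^{\langle \alp(i), \mathbf{x}\rangle}$, so that $\log|m_i(\mathbf{z})| = \log|b_i| + \langle \alp(i),\mathbf{x}\rangle$. Under the normalization $\alp(0) = 0$ and $b_0 = 1$ we get $\log|m_0(\mathbf{z})| = 0$, and hence the defining equalities $\log|m_0| = \log|m_i|$ for $i = 1,\ldots,n$ become
\begin{eqnarray*}
\langle \alp(i), \mathbf{x}\rangle & = & -\log|b_i|, \qquad i = 1,\ldots,n.
\end{eqnarray*}

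The next step is simply to recognize this as the matrix equation in the statement. Since $M \in \fZ^{n\times n}$ has columns $\alp(1),\ldots,\alp(n)$, the $i$-th row of $M^t$ is $\alp(i)^t$, so the left-hand sides $\langle \alp(i),\mathbf{x}\rangle$ assemble into $M^t \cdot \mathbf{x}$, while the right-hand sides assemble into $-\Log|(b_1,\ldots,b_n)^t|$. Thus $\eq(y)$ solves $M^t \cdot \mathbf{x} = -\Log|(b_1,\ldots,b_n)^t|$.

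For uniqueness, I would invoke that $\alp(0),\ldots,\alp(n)$ are the vertices of an $n$-simplex $\Delta$; after translating $\alp(0)$ to the origin, the vectors $\alp(1),\ldots,\alp(n)$ are linearly independent, so $M$ (hence $M^t$) is invertible and the linear system has a unique solution. This also confirms existence of $\eq(y)$ as a genuine point of $\fR^n$, namely $\mathbf{x} = (M^t)^{-1}(-\Log|(b_1,\ldots,b_n)^t|)$, and that this $\mathbf{x}$ indeed lies in $\cE(f)$ because it makes $n+1 \geq 2$ of the monomial norms coincide. There is essentially no obstacle here: the only point requiring a word of care is that Definition~\ref{DefEquilibriumPoint} only demands "at least" all monomials but $m_y$ have equal norm, but the $n+1$ stated equalities already pin down $\mathbf{x}$ uniquely via the invertibility of $M^t$, so any such point is forced to be this one.
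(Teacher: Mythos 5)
Your proof is correct and follows the same route as the paper: unwind the definition of the equilibrium point, take logarithms of the monomial norms, and recognize the resulting $n$ equations as the rows of $M^t\mathbf{x} = -\Log|b|$. You are slightly more explicit than the paper about uniqueness (invoking linear independence of the $\alp(i)$ coming from the simplex hypothesis), which the paper leaves implicit, but this is the same argument.
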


\begin{proof}
The point $\eq(y)$ is the point where all monomials $m_0(\mathbf{z}),\ldots,m_n(\mathbf{z})$ are in equilibrium. Hence $\eq(y)$ satisfies the $n$ linear equations
\begin{eqnarray*}
	\log |b_i| + \lf\langle \mathbf{w}, \alp{(i)} \ri\rangle	& =	& \log|b_0| + \lf\langle \mathbf{w}, \alp{(0)} \ri\rangle.
\end{eqnarray*}
Since $\alp(0) = 0$, each of these coincides with one row of the linear system $M^t \cdot \mathbf{x} = - \Log|b|$.
\end{proof}

The following lemma states how the spine $\cS(f)$ of the amoeba $\cA(f)$ is related to $\cC(f)$.

\begin{lem}
Let $f \in \PD$.
\begin{enumerate}
	\item[(a)] If $\cA(f)$ is solid then the inner vertex of $\cS(f)$ is the equilibrium point $\eq(y)$ and $\cS(f)$ coincides with the complement-induced tropicalization $\cC(f)$.
	\item[(b)] If $\cA(f)$ has genus 1 then $\cS(f)$ and $\cC(f)$ are homotopy equivalent, their inner simplices $\Sig_{\cS(f)}$ and $\Sig_{\cC(f)}$ are similar and all faces not belonging to the inner simplices coincide in all points lying outside of both inner simplices.
\end{enumerate}
\label{LemRelSpinEquilibPerforated}
\end{lem}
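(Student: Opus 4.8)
The plan is to analyze the two tropical polynomials whose tropical hypersurfaces give $\cS(f)$ and $\cC(f)$ and compare their coefficient data. Recall that $\cS(f)$ is the tropical hypersurface of $\bigoplus_{\alp \in C} \beta_{\alp} \odot \mathbf{w}^{\alp}$ with the Ronkin coefficients $\beta_{\alp}$ from~\eqref{EquRonkinCoeff}, while $\cC(f) = \cT(\trop(f_{|C}))$ is the tropical hypersurface of $\bigoplus_{\alp(i) \in C} \log|b_i| \odot \mathbf{w}^{\alp(i)}$. In both cases $C$ is the set of orders of existing complement components, so the two tropical polynomials are supported on the same point set $C \subseteq A$; by Lemma~\ref{LemRullgard} and the circuit remark, $C$ is either $\{\alp(0),\ldots,\alp(n)\}$ (the solid case) or all of $A = \{\alp(0),\ldots,\alp(n),y\}$ (the genus~$1$ case).

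For part~(a): if $\cA(f)$ is solid, then $C = \{\alp(0),\ldots,\alp(n)\}$, which consists precisely of the vertices of the simplex $\Delta$. Thus $\trop(f_{|C})$ and $\bigoplus \beta_{\alp}\odot\mathbf{w}^{\alp}$ are both tropical polynomials supported on the vertex set of a simplex. Any such tropical polynomial (being dual to the trivial subdivision of $\Delta$) has a tropical hypersurface that is a single tropical hyperplane with one vertex, obtained by setting all $n+1$ affine-linear forms equal; the location of this vertex is the unique solution of the corresponding linear system. First I would observe that for the form with the $\log|b_i|$ this vertex is, by definition and by Lemma~\ref{LemEquilibPointCoord}, exactly $\eq(y)$. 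Then I must argue that the Ronkin coefficients $\beta_{\alp(i)}$ differ from $\log|b_i|$ only by an affine-linear function of the exponent, i.e. $\beta_{\alp(i)} = \log|b_i| + \langle \alp(i), \mathbf{v}\rangle + c_0$ for constants $\mathbf{v} \in \fR^n$, $c_0 \in \fR$ independent of $i$ --- equivalently, that on the region where the outermost monomial dominates (the region giving rise to $E_{\alp(i)}$), the integral term in~\eqref{EquRonkinCoeff} is affine in $\alp(i)$. This follows because in the solid case $\cA(f)$ is a small perturbation of the maximally sparse amoeba (Theorem~\ref{ThmMaximallSparseCase}) and the Ronkin function is affine on each complement component with gradient $\alp(i)$, so the constant-order shift is the same for all vertices. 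An affine reparametrization of the coefficients does not move the vertex of the tropical hyperplane, hence the inner vertex of $\cS(f)$ is also $\eq(y)$, and since two tropical hyperplanes with the same support and the same vertex coincide, $\cS(f) = \cC(f)$.

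For part~(b): now $C = A$, so both tropical polynomials are supported on the circuit $A$, which has exactly two triangulations; since $y$ lies in the interior of $\Delta$, a regular subdivision of $A$ either is the trivial one (not using $y$) or the ``star'' triangulation coning $y$ with the facets of $\Delta$. In the genus~$1$ situation the relevant subdivision is the star triangulation, whose dual tropical hypersurface has an inner bounded cell: the inner simplex $\Sig_{\cS(f)}$, resp. $\Sig_{\cC(f)}$. The key point is that the combinatorial type of the tropical hypersurface (which edges, which bounded cell) is determined by the regular subdivision, which is the same for both, so $\cS(f)$ and $\cC(f)$ are combinatorially isomorphic, hence homotopy equivalent (both homotopy equivalent to a circle). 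The inner bounded cell is in each case an $n$-simplex whose facet normals are the primitive outward normals of the facets of $\Delta$ --- these depend only on $A$, not on the coefficients --- so $\Sig_{\cS(f)}$ and $\Sig_{\cC(f)}$ are homothetic, i.e. similar. Finally, for the unbounded part: the rays and higher-dimensional unbounded faces of a tropical hypersurface dual to the star triangulation are indexed by faces of $\Delta$ not containing $y$, and along such an unbounded face only the two outer monomials $m_i, m_j$ straddling that face are relevant; there the defining condition $\log|b_i| + \langle \mathbf{w},\alp(i)\rangle = \log|b_j| + \langle\mathbf{w},\alp(j)\rangle$ for $\cC(f)$ and $\beta_{\alp(i)} + \langle\mathbf{w},\alp(i)\rangle = \beta_{\alp(j)} + \langle\mathbf{w},\alp(j)\rangle$ for $\cS(f)$ cut out parallel affine subspaces (same linear part $\langle\mathbf{w},\alp(i)-\alp(j)\rangle$), and I would show that outside both inner simplices the relevant constant offsets agree, using again that $\beta_{\alp}$ and $\log|b_\bullet|$ differ by the same affine shift on the portion of each outer region lying outside the inner simplices.

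The main obstacle I anticipate is the precise control of the Ronkin coefficients $\beta_{\alp(i)}$: showing that, restricted to the facet regions / vertex regions that actually occur, the map $\alp(i) \mapsto \beta_{\alp(i)} - \log|b_i|$ is the restriction of a single affine function, with the \emph{same} affine function governing both the inner-vertex location in~(a) and the outer-face agreement in~(b). This is where one genuinely uses that $f \in \PD$ (a circuit with one interior point) rather than an arbitrary polynomial, and where the earlier structural results about $\cC(f)$ being a deformation retract (Lemma~\ref{LemRullgard}) and about the two possible triangulations of a circuit are indispensable. Everything else is bookkeeping about tropical hypersurfaces dual to the star triangulation of a simplex.
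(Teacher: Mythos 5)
Your high-level strategy — compare the tropical polynomials defining $\cS(f)$ and $\cC(f)$ by relating the Ronkin coefficients $\beta_{\alp(i)}$ to $\log|b_i|$ — is the right one, and you correctly localize the heart of the matter to ``precise control of the Ronkin coefficients.'' But the claim you aim for there is both unproved and insufficient, and this is a genuine gap.

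You propose to show that $\beta_{\alp(i)} = \log|b_i| + \langle\alp(i),\mathbf{v}\rangle + c_0$ for some $\mathbf{v}\in\fR^n$, $c_0\in\fR$ independent of $i$. The justification offered (``$\cA(f)$ is a small perturbation of the maximally sparse amoeba... so the constant-order shift is the same for all vertices'') is not an argument: $\cA(f)$ solid is an open condition but not a perturbative one, and nothing in Theorem~\ref{ThmMaximallSparseCase} gives affine control of the integral term in \eqref{EquRonkinCoeff}. More importantly, even if you had this affine relation, it would not prove the lemma. Replacing the coefficients $\log|b_i|$ by $\log|b_i| + \langle\alp(i),\mathbf{v}\rangle + c_0$ in a tropical polynomial $\max_i(\beta_i + \langle\mathbf{w},\alp(i)\rangle)$ translates the tropical hypersurface by $-\mathbf{v}$. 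Thus your claim would only show $\cS(f)$ is a translate of $\cC(f)$; for (a) you would get that the inner vertex of $\cS(f)$ is $\eq(y) - \mathbf{v}$, not $\eq(y)$, and for (b) the unbounded faces would be parallel but not coincident. What is actually needed — and what the paper uses — is the \emph{exact} equality $\beta_{\alp(i)} = \log|b_i|$ whenever $\alp(i)$ is a vertex of $\New(f)$. This is a standard fact from \cite{PR1}: in the unbounded complement component $E_{\alp(i)}(f)$, the Ronkin function is affine with gradient $\alp(i)$, and in the recession cone of that component $f(\mathbf{z}) \sim b_i\mathbf{z}^{\alp(i)}$, so $N_f(\mathbf{w}) - \langle\alp(i),\mathbf{w}\rangle \to \log|b_i|$; affineness on all of $E_{\alp(i)}$ then forces the constant to be $\log|b_i|$ exactly, with no affine slack. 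Once you have this, (a) is immediate (in the solid case $C$ consists only of vertices, so the two tropical polynomials have identical coefficients, and the common vertex is $\eq(y)$ by Lemma~\ref{LemEquilibPointCoord}), and the outer-face coincidence in (b) is immediate as well.

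Two smaller points in (b): your remark that both are ``homotopy equivalent to a circle'' is only true for $n=2$; the bounded part of the tropical hypersurface dual to the star triangulation is the boundary of an $n$-simplex, hence homotopy equivalent to $S^{n-1}$. (The paper sidesteps computing the homotopy type by citing that $\cS(f)$ and $\cC(f)$ are both deformation retracts of $\cA(f)$, via the spine result from \cite{PR1} and Lemma~\ref{LemRullgard}.) And the similarity of the inner simplices does not require facet-normal bookkeeping: it follows directly once you know both tropical hypersurfaces are dual to the \emph{same} regular triangulation of $\New(f)$, since their inner cells are then bounded by parallel hyperplanes.
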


\begin{proof}
(a) If $\cA(f)$ is solid then the order of any complement component of $\cA(f)$ is a vertex of $\New(f)$ and hence for every Ronkin coefficient $\beta_{\alp{(i)}}$ we have $\beta_{\alp{(i)}} = \log|b_i|$ and therefore $\cS(f) = \cC(f)$.

(b) Let $\cA(f)$ have genus 1.
Since $n = 1$ is trivial, we can assume $n \geq 2$.
$\cS(f)$ and $\cC(f)$ coincide in all points lying outside of both inner simplices since for any vertex $\alp(i)$ of $\New(f)$ we have $\beta_{\alp(i)} = \log|b_i|$. As $n \ge 2$,
homotopy equivalence follows from Lemma \ref{LemRullgard}. Since $\cS(f)$ and $\cC(f)$ 
are tropical hypersurfaces dual to the same triangulation of $\New(f)$, $\Sig_{\cS(f)}$ and $\Sig_{\cC(f)}$ are similar.
\end{proof}

\begin{lem}
Let $n \geq 2$, $\alpha(0) = 0$, $b_0 = 1$ and $f \in \PD$ such that $\cA(f)$ has genus 1.
\begin{enumerate}
	\item[(a)] If $\mathbf{z} \in \lf(\fC^*\ri)^n$ with $\Log|\mathbf{z}| = \eq(y)$ then $|m_y(\mathbf{z})| > 1$.
	\item[(b)] The equilibrium point $\eq(y)$ is contained in the interior of the simplex with vertices $\eq(0),\ldots,\eq(n)$.
\end{enumerate}
\label{LemInnerEquiPoint}
\end{lem}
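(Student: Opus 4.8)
The plan is to exploit the genus-1 hypothesis, which forces the inner complement component $E_y(f)$ to be non-empty, together with the explicit description of equilibrium points and the behaviour of the Ronkin function. For part (a), observe that $\eq(y)$ is by definition the point where the outer monomials $m_0,\ldots,m_n$ all have equal absolute value, say $r := |m_i(\mathbf z)|$ for all $i$; after the normalization $b_0=1$, Lemma~\ref{LemEquilibPointCoord} pins $\eq(y)$ down uniquely, and in fact $r = 1$ there since $|m_0(\mathbf z)| = |b_0|\,|\mathbf z^{\alp(0)}| = |\mathbf z^{\mathbf 0}| = 1$ once $\Log|\mathbf z| = \eq(y)$ and $\alp(0)=0$. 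Now I would argue that if $|m_y(\mathbf z)| \le 1 = r$, then the point $\eq(y)$ lies on $\cC(f) = \cT(\trop(f_{|C}))$: indeed $C$ contains all the vertices $\alp(0),\ldots,\alp(n)$ (they always have non-empty complement component) and, since $\cA(f)$ has genus~1, also $y$; at $\eq(y)$ all the linearized moduli $\log|b_i| + \langle \alp(i),\mathbf w\rangle$ tie at the common value $0$, and the one coming from $y$ is $\le 0$, so the maximum in $\trop(f_{|C})$ is attained at least twice. But $\cC(f)$ is a deformation retract of $\cA(f)$ (by Lemma~\ref{LemRullgard} and the remarks following it), hence $\cC(f) \subseteq \cA(f)$, so $\eq(y) \in \cA(f)$. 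On the other hand, $\eq(y)$ is the inner vertex of the subdivision; when the amoeba has genus~1, the bounded complement component $E_y(f)$ is precisely the bounded region of $\cC(f)$, which contains $\eq(y)$ in its (relative) interior. This contradicts $\eq(y)\in\cA(f)$, and the only way out is $|m_y(\mathbf z)|>1$. I expect the delicate point here to be making rigorous the claim that $\eq(y)$ lies in the bounded complement region in the genus-1 case: one has to use that the inner simplex $\Sigma_{\cC(f)}$ is, by Lemma~\ref{LemRelSpinEquilibPerforated}(b), similar to $\Sigma_{\cS(f)}$ and dual to the triangulation of $\New(f)$ that uses the inner point $y$, and that $\eq(y)$ — the point where exactly the outer monomials balance — is the common vertex of the outer tentacles of $\cC(f)$, hence the centre of that bounded region.

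For part (b), I would argue by continuity/degree, varying only $|c|$ (equivalently $c$ along a ray) while keeping $b_0,\ldots,b_n$ fixed. The points $\eq(0),\ldots,\eq(n)$ depend only on $b_0,\ldots,b_n$ (each $\eq(j)$ is the solution of a linear system in the $\log|b_i|$, $i\ne j$, analogous to Lemma~\ref{LemEquilibPointCoord}), so the simplex with these vertices is fixed; call it $T$. I claim $\eq(y)$ — also independent of $c$ — is the barycentric-type "centre" of $T$ determined by the affine dependence $\sum \lambda_i \alp(i) = y$, $\sum\lambda_i = 1$, $\lambda_i>0$ (the $\lambda_i$ are positive precisely because $y$ is interior to $\conv\{\alp(0),\ldots,\alp(n)\}$). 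Concretely, subtracting the defining linear relations for $\eq(y)$ and for $\eq(j)$ and using $\sum\lambda_i\alp(i)=y$ with $\alp(0)=0$, one checks that $\eq(y) = \sum_{i} \lambda_i\, \eq(i)$, a convex combination with all weights strictly positive; hence $\eq(y)$ lies in the interior of $T$. This is essentially a linear-algebra identity: the map $\mathbf w \mapsto (\log|b_i|+\langle\alp(i),\mathbf w\rangle)_{i=0}^n$ is affine, $\eq(j)$ is where coordinate $j$ is (weakly) dominated and the rest are tied, and $\eq(y)$ is where coordinates $0,\ldots,n$ are tied; applying the affine functional $\sum\lambda_i(\log|b_i|+\langle\alp(i),\cdot\rangle)$ — which, since $\sum\lambda_i\alp(i)=y$, differs from the $y$-coordinate only by the constant $\log\bigl(\prod|b_i|^{\lambda_i}\bigr) - \log|c|$ — collapses the whole thing to the stated convex-combination formula. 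The main obstacle I anticipate is bookkeeping: correctly matching the "at least all but one monomial" clause in Definition~\ref{DefEquilibriumPoint} (an inequality, not an equality, for the excluded monomial) against the strict interiority I want to conclude, and handling the normalization $\alp(0)=0$, $b_0=1$ cleanly so that the linear systems line up. Part (a) may in fact be used as an input to (b) — knowing $|m_y(\eq(y))|>1$ guarantees the excluded-monomial inequalities at $\eq(y)$ are consistent with $\eq(y)$ being a genuine interior point rather than sitting on a facet of $T$ — so I would prove (a) first and then feed it into (b).

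Finally, I would wrap up by noting the edge cases: $n=1$ is excluded by hypothesis, and the uniqueness of $\eq(y)$ (needed so that "the" equilibrium point makes sense) is exactly Lemma~\ref{LemEquilibPointCoord}, since $M$ is invertible as $\alp(1),\ldots,\alp(n)$ span an $n$-simplex with $\alp(0)$ at the origin. The genus-1 hypothesis enters only through (i) $y \in C$, which is what puts the $y$-term into $\trop(f_{|C})$ in part (a), and (ii) the structural description of $\cC(f)$ from Lemma~\ref{LemRelSpinEquilibPerforated}(b); everything else is the linear algebra of circuits.
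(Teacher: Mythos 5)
Your argument for part (a) has a genuine gap. After correctly establishing $\eq(y) \in \cC(f)$ under the assumption $|m_y(\mathbf z)| \le 1$, you conclude $\eq(y) \in \cA(f)$ (since $\cC(f)\subseteq\cA(f)$) and then seek a contradiction by claiming $\eq(y)$ lies in the interior of the bounded complement region of $\cC(f)$. But that bounded region is exactly the set where the $y$-monomial strictly dominates, and $\eq(y)$ lies in it if and only if $|m_y(\eq(y))| > 1$ --- which is precisely what you are trying to prove. Worse, under the working assumption $|m_y(\eq(y))| \le 1$ the bounded region is \emph{empty}: writing $\ell_i(\mathbf w) = \log|b_i| + \langle\alp(i),\mathbf w\rangle$ and $\ell_y(\mathbf w) = \log|c| + \langle y,\mathbf w\rangle$, one has $\ell_y = \sum_i \lambda_i \ell_i + C$ with barycentric weights $\lambda_i>0$ and $C = \log|m_y(\eq(y))| \le 0$, so $\ell_y$ never strictly exceeds all the $\ell_i$. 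Your contradiction therefore does not materialize. The correct contradiction is topological, as in the paper: you rightly observe that $\eq(y)$ is ``the common vertex of the outer tentacles of $\cC(f)$'', and once $\eq(y) \in \cC(f)$ this forces $\cC(f)$ to be a genus-$0$ fan with apex $\eq(y)$, contradicting the fact that $\cC(f)$ must have the same homotopy type as the genus-$1$ amoeba by Lemma~\ref{LemRullgard}. So you see the key geometric fact but draw the wrong conclusion from it.

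Your route for part (b) is genuinely different from the paper's and the core idea is sound. The paper simply observes that the set where $m_y$ strictly dominates is $\mathrm{int}\,\Sigma'$ and applies (a). You instead propose the affine identity $\eq(y) = \sum_i \lambda_i\,\eq(i)$, which does hold (it follows cleanly from $\ell_y = \sum_i\lambda_i\ell_i + C$ by evaluating each $\ell_i$ at the prospective point $\sum_j\lambda_j\eq(j)$) and has the pleasant feature of making the strict positivity of the $\lambda_i$ do the work. Two caveats. First, your claim that the $\eq(j)$ ``depend only on $b_0,\ldots,b_n$'' is false: each $\eq(j)$ is a tie involving $m_y$ and hence depends on $|c|$ (indeed Theorem~\ref{ThmRoughBounds}(a) has all $\eq(j)$ collapsing onto $\eq(y)$ exactly when $\kappa = |\Theta|$); fortunately the linear-algebra derivation does not actually use this claim. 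Second, the convex-combination identity only places $\eq(y)$ in the \emph{relative} interior of $\conv\{\eq(0),\ldots,\eq(n)\}$; for ``interior of the simplex'' one also needs the $\eq(j)$ to be affinely independent. This does follow from (a) (an affine dependence among the $\eq(j)$, pushed through the functionals $\ell_y - \ell_i$, forces $|m_y(\eq(y))| = 1$), and you gesture at exactly this dependency in your last paragraph, but it is left unproved and should be made explicit.
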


\begin{proof}
(a) Assume that $|m_y(\mathbf{z})| \leq 1$. Due to definition of $\eq(y)$ and $\Log|\mathbf{z}| = \eq(y)$ we know $|m_i(\mathbf{z})| = 1$ for all $i \in \{0,\ldots,n\}$. Hence, we have $\eq(y) \in \cC(f)$. By Lemma \ref{LemEquilibPointCoord} $\eq(y)$ is the unique point where the infinite cells of $\cC(f)$ intersect. Thus, $\cC(f)$ has genus 0. This yields a contradiction since $\cA(f)$ has genus 1 and $\cC(f)$ is a deformation retract of $\cA(f)$ for $n \geq 2$ by Lemma \ref{LemRullgard}.

(b) Let $\Sig'$ be the simplex with vertices $\eq(0),\ldots,\eq(n)$. By definition of $\cC(f)$ we have for all $\mathbf{z} \in \lf(\fC^*\ri)^n$: If $|m_y(\mathbf{z})| > |m_i(\mathbf{z})|$ for all $i \in \{0,\ldots,n\}$, then $\Log|\mathbf{z}|$ is contained in the interior of $\Sig'$. With (a) the assertion follows.
\end{proof}

Let $f \in \PD$, and consider $f$ with a varying $\arg(c)$. 
An angle $\arg(c)$ is called in
\emph{extreme opposition} if there exists some $\mathbf{z} \in (\fC^*)^n$ with
\begin{equation} \label{Equ:extreme}
  \arg(m_y(\mathbf{z})) \ = \ \arg(m_i(\mathbf{z})) + \pi \pmod{2 \pi} \, , \quad 0 \le i \le n \, .
\end{equation}
Since condition~\eqref{Equ:extreme} is actually independent of the 
norm of $\mathbf{z}$ (and also of the norm of the coefficients), 
we call $\arg(\mathbf{z})$ an \emph{extremal phase}.

\begin{lem}
Let $f$ be in $\PD$, where we consider $\arg(c)$ as parameter.
Then there always exists some choice of $\arg(c)$ such that $\arg(c)$ is in extreme opposition.
\end{lem}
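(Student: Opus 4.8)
The plan is to reduce the claim to the solvability of a single linear system modulo $2\pi$, exploiting that the exponent matrix is invertible. Throughout I use the normalization $\alp(0) = 0$ and $b_0 = 1$ available in this section, so that $m_0(\mathbf{z}) \equiv 1$ and hence $\arg(m_0(\mathbf{z})) \equiv 0$ for every $\mathbf{z} \in (\fC^*)^n$. Taking $i = 0$ in~\eqref{Equ:extreme} then shows that an angle $\arg(c)$ is in extreme opposition if and only if there is a phase $\phi \in [0,2\pi)^n$ such that $\arg(m_i(\mathbf{z})) \equiv 0 \pmod{2\pi}$ for all $1 \le i \le n$ while $\arg(m_y(\mathbf{z})) \equiv \pi \pmod{2\pi}$, where $\mathbf{z}$ denotes any point with $\arg(\mathbf{z}) = \phi$ (condition~\eqref{Equ:extreme} being independent of $|\mathbf{z}|$).

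Writing $\arg(m_i(\mathbf{z})) = \arg(b_i) + \lan \alp(i), \phi \ran$, the first group of conditions is precisely the system $M^t \phi \equiv -(\arg(b_1), \ldots, \arg(b_n))^t \pmod{2\pi}$, i.e.\ the argument-analogue of the norm-equilibrium system appearing in Lemma~\ref{LemEquilibPointCoord}. Since $\alp(1), \ldots, \alp(n)$ are linearly independent (they span, together with $\alp(0) = 0$, the $n$-simplex $\Delta$), the matrix $M^t$ is invertible over $\fR$; I would solve the system over $\fR$ and then reduce the solution modulo $2\pi$ to obtain a phase $\phi \in [0,2\pi)^n$ with all outer monomials in mutual phase agreement at argument $0$. (Equivalently, $M^t$ induces a surjective self-map of the torus $(\fR/2\pi\fZ)^n$.)

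Finally, with such a $\phi$ fixed, I would put the inner monomial into opposition by choosing $\arg(c) := \pi - \lan y, \phi \ran \pmod{2\pi}$, so that for $\mathbf{z} = (e^{i\phi_1}, \ldots, e^{i\phi_n})$ one gets $\arg(m_y(\mathbf{z})) = \arg(c) + \lan y, \phi \ran \equiv \pi \pmod{2\pi}$. Then $\arg(m_y(\mathbf{z})) \equiv \arg(m_i(\mathbf{z})) + \pi \pmod{2\pi}$ holds for every $0 \le i \le n$, so this $\arg(c)$ is in extreme opposition (with associated extremal phase $\phi$), which proves the lemma.

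I do not expect a real obstacle here: the only substantive input is the invertibility of $M$, which is part of the hypothesis that $\New(f) = \Delta$ is an $n$-simplex, and the one point deserving a word of care — passing from the real solution of $M^t\phi = -(\arg(b_i))_i$ to a solution modulo $2\pi$ — is immediate.
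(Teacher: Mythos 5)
Your proposal is correct and follows essentially the same route as the paper: normalize so that $\alp(0)=0$, $b_0=1$, observe that extreme opposition forces all outer monomials into a common argument (which reduces to the integer linear system $M^t\phi \equiv -(\arg(b_1),\ldots,\arg(b_n))^t \pmod{2\pi}$), solve this via non-singularity of $M$, and then set $\arg(c) := \pi - \lan y,\phi\ran$. The only cosmetic difference is that the paper emphasizes the $|\det M|$-fold covering of the torus (hence counting the number of extremal phases), whereas you simply note that integrality of $M^t$ makes the real solution well-defined modulo $2\pi$ — the same fact, packaged for existence only.
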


\begin{proof} By multiplying $f$ with a Laurent monomial, we can assume
$\alp(0)=0$ and $b_0 = 1$.

Setting $\phi := \arg(\mathbf{z})$, the condition~\eqref{Equ:extreme} is 
a linear condition in $\phi$. Using the non-singular 
integral matrix $M$ introduced above, the image of $[0,2\pi)^n$
under the mapping $\phi \mapsto M \phi$ is a $D$-fold covering of
$[0,2 \pi)^n$ where $D := \det(M)$. Hence, there exists
$\phi \in [0,2\pi)^n$ with
\begin{eqnarray*}
  M^t \cdot \phi	& =	& - (\arg(b_1),\ldots,\arg(b_n))^t \mod 2 \pi,
\end{eqnarray*}
and indeed the number of distinct solutions for $\phi$
in $[0,2\pi)^n$ is $D$.
Setting $\arg(c) := \pi - \langle \phi,y \rangle$ we obtain the result.
\end{proof}

In order to study the amoebas of polynomials in $\PD$, we investigate the parametric family of polynomials
\begin{eqnarray}
	f_\kappa	& :=	& \left[|c| \cdot e^{i \cdot \arg(c)} \cdot \mathbf{z}^y + \sum_{i = 0}^n b_i \cdot \mathbf{z}^{\alp(i)}\right]_{|c| = \kappa} \ = \ \kappa \cdot e^{i \cdot \arg(c)} \cdot \mathbf{z}^y + \sum_{i = 0}^n b_i \cdot \mathbf{z}^{\alp(i)}
	\label{Equfkappa}	
\end{eqnarray}
 in  $\PD$. Recall that, for a fixed $\kappa_1 \in \fR_{> 0}$, $E_y(f_{\kappa_1}) \subset \fR^n$ denotes the set of all points belonging to the complement of $\cA(f_{\kappa_1})$ which have the order $y$.

For a parametric family $f_\kappa$ we are interested in those parameters $\kappa$ where the genus of $\cA(f_\kappa)$ changes. We say that $\cA(f_\kappa)$ \textit{switches} from genus 0 to 1 at $\kappa_0$, if $E_y(f_{\kappa_0}) = \emptyset$ and for every (sufficiently small) $\eps > 0$ we have $E_y(f_{\kappa_0 + \eps}) \neq \emptyset$. Note that, 
for sufficiently large $\kappa$, $\cA(f_\kappa)$ is always of genus 1 (e.g. by the lopsidedness criterion; see Section \ref{SecLopsidedness}).

For a parameter value $\kappa_1 \in \fR$ with $E_y(f_{\kappa_1}) \neq \emptyset$ we
are furthermore interested in characterizing the point where the complement component $E_y$
appears first (with respect to values $\kappa < \kappa_1$ in the parametric
family).
Formally, we say that the inner complement component $E_y(f_{\kappa_1})$ \textit{appears first} 
at $\mathbf{w} \in \Log\lf|\lf(\fC^*\ri)^n\ri|$ if the following conditions hold:
\begin{enumerate}
	\item[(a)] $\mathbf{w} \in E_y(f_{\kappa_1})$, and
	\item [(b)] there exists a $\kappa_0 < \kappa_1$ such that $E_y(f_{\kappa_0}) = \emptyset$ and for every $\kappa \in [\kappa_0,\kappa_1]$ we have $E_y(f_{\kappa}) = \emptyset$ or $\mathbf{w} \in E_y(f_{\kappa})$.
\end{enumerate}
For every such $\kappa_1$ this point is unique and will be denoted by 
$\app(f_{\kappa_1})$.

Let $K \subset \fR_{\geq 0}$ for some given parametric family $f_{\kappa}$ denote parameters where $\cA(f_\kappa)$ switches from genus 0 to 1. Then we say $f_\kappa$ \textit{switches the last time} from genus 0 to 1 at $\kappa^* := \max K$. In the following we are in particular interested in the corresponding point $\app(f_{\kappa^*})$ where the inner complement component \textit{finally appears} and which we denote as $\mathbf{a}(f_{\kappa})$.

Let $M_j$ be the matrix obtained by replacing the $j$-th column of $M$ by $y$. For convenience of notation we define 
\begin{eqnarray}
	\Theta	& :=	& \prod_{i = 1}^n b_i^{\det \lf(M_i\ri)/\det(M)}. \label{EquTheta}
\end{eqnarray}

With the results of the lemmas we are able to establish the main theorem of this section.

\begin{thm} Let $n \geq 2$, let 
$f_\kappa$ be a parametric family of the form (\ref{Equfkappa}) in $\PD$ 
with $\alpha(0) = 0$, $b_0 = 1$,
and let $\Theta$ be defined by (\ref{EquTheta}). Then we have:
\begin{enumerate}
	\item[(a)] For $\kappa = |\Theta|$ we have $\eq(y) = \eq(0) = \cdots = \eq(n)$. Hence, in particular, $\cA(f_\kappa)$ is solid for all choices of $\arg(c)$ whenever $\kappa \leq |\Theta|$.
	\item[(b)] For $\kappa > (n+1) \cdot |\Theta|$ we have $\eq(y) \not\in \cA(f_\kappa)$ and hence $\cA(f_\kappa)$ has genus 1. If additionally $\arg(c)$ is in extreme opposition and the inner complement component $E_y(f_{\kappa})$ appears finally at the point $\eq(y)$ then this bound is sharp, i.e., $\eq(0) \in \cA(f_{(n+1) \cdot |\Theta|})$.
\end{enumerate}
\label{ThmRoughBounds}
\end{thm}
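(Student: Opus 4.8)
The plan is to work with the parametric family $f_\kappa$ normalized so that $\alpha(0)=0$ and $b_0=1$, and to exploit the explicit coordinate description of the equilibrium points. For part (a), I would first compute $\eq(y)$ via Lemma~\ref{LemEquilibPointCoord} as the solution of $M^t\mathbf{x} = -\Log|(b_1,\dots,b_n)^t|$, and then evaluate the inner monomial there: since $\langle y, \eq(y)\rangle = \sum_i (\det M_i/\det M)\langle \alp(i),\eq(y)\rangle$ by writing $y$ in the basis $\alp(1),\dots,\alp(n)$ with Cramer coefficients $\det M_i/\det M$, one gets $|m_y(\eq(y))| = \kappa \cdot e^{\langle y,\eq(y)\rangle} = \kappa/|\Theta|$ using $e^{\langle \alp(i),\eq(y)\rangle} = 1/|b_i|$. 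So $|m_y(\eq(y))| = 1$ exactly when $\kappa = |\Theta|$, and since at $\eq(y)$ all outer monomials already have norm $1$, this forces $\eq(y)$ to also be the common equilibrium point $\eq(j)$ for each $j$ (at $\eq(j)$ all monomials except $m_j$ have equal norm; when $|m_y| = 1 = |m_i|$ for all $i$, every such defining condition is satisfied at the single point $\eq(y)$). This gives $\eq(y)=\eq(0)=\cdots=\eq(n)$. For $\kappa \le |\Theta|$, solidness should follow because if $\cA(f_\kappa)$ had genus $1$, Lemma~\ref{LemInnerEquiPoint}(a) would force $|m_y(\mathbf{z})| > 1$ at $\Log|\mathbf{z}| = \eq(y)$, contradicting $|m_y(\eq(y))| = \kappa/|\Theta| \le 1$; monotonicity of the appearance of $E_y$ in $\kappa$ (discussed before the theorem) handles the boundary case cleanly.

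For part (b), the non-containment $\eq(y)\notin\cA(f_\kappa)$ when $\kappa > (n+1)|\Theta|$ should follow from a lopsidedness-type estimate: at $\Log|\mathbf{z}| = \eq(y)$ the outer monomials all have norm $1$, so their sum has norm at most $n+1$, while $|m_y(\mathbf{z})| = \kappa/|\Theta| > n+1$; hence $|m_y(\mathbf{z})| > \sum_{i=0}^n |m_i(\mathbf{z})|$, so by the triangle inequality $\Fa[\mathbf{w},f](\phi) \ne 0$ on the whole fiber over $\eq(y)$, putting $\eq(y)$ in the complement. That $\eq(y)$ then has order $y$ (so the amoeba has genus $1$) follows from Lemma~\ref{LemInnerEquiPoint}(b) together with the order characterization: the inner monomial dominates, placing us in the interior cell of $\cC(f)$ which carries order $y$.

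The sharpness claim is the subtle part. Assume $\arg(c)$ is in extreme opposition and that $E_y$ appears finally at $\eq(y)$, and set $\kappa = (n+1)|\Theta|$. The idea is that the appearance of $E_y$ at $\eq(y)$ is precisely the transition where $\min_{\phi}|\Fa[\eq(y),f_\kappa](\phi)| = 0$, and the extremal phase aligns all outer monomials to point opposite to the inner one simultaneously; at that phase $\Fa = (n+1)e^{i\theta} - |m_y| e^{i\theta} = (n+1 - \kappa/|\Theta|)e^{i\theta}$, which vanishes exactly at $\kappa = (n+1)|\Theta|$. Now I must show $\eq(0) \in \cA(f_\kappa)$ for this $\kappa$. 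At $\eq(0)$, by definition all monomials except $m_0$ have equal norm, say $\rho$; I would compute $\rho$ and $|m_0(\eq(0))|$ explicitly in terms of $|\Theta|$ and $\kappa$ (the computation parallels part (a) but with the roles of $\alp(0)$ and the index set shifted — here $\eq(0)$ solves a modified linear system, and the norm of the inner monomial there can again be read off via Cramer's rule). Using $\kappa = (n+1)|\Theta|$ one checks that $|m_0(\eq(0))|$, $n\rho$ (the combined norm of $m_1,\dots,m_n$), and $\rho$ (the norm of $m_y$, equal to that of each $m_i$, $i\ge 1$) satisfy the triangle inequalities, so — invoking the extremal-phase hypothesis to realize the cancellation, exactly as in the proof of Theorem~\ref{ThmMaximallSparseCase} where three quantities forming triangle edge lengths give a vanishing sum on the torus — there is $\phi$ with $\Fa[\eq(0),f_\kappa](\phi) = 0$, i.e. $\eq(0) \in \cA(f_\kappa)$.

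The main obstacle I anticipate is the last step: carefully relating the ``appears finally at $\eq(y)$'' hypothesis to the precise value $\kappa^* = (n+1)|\Theta|$ and then verifying the triangle inequalities at $\eq(0)$ with equality occurring at exactly the right boundary. One must be careful that extreme opposition for $\arg(c)$ at $\eq(y)$ is compatible with the phase needed to force vanishing at $\eq(0)$ — this requires tracking the extremal phase $\phi$ through the matrix $M$ (the $D$-fold covering argument from the preceding lemma) and checking the two conditions are governed by the same $\phi$. Everything else is a finite computation with Cramer's rule and the triangle inequality; the genuine content is the coordinated bookkeeping of phases and norms at the two equilibrium points.
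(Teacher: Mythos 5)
Your handling of part (a) and of the non-containment $\eq(y)\notin\cA(f_\kappa)$ in part (b) is essentially the paper's proof: compute $|m_y(\Log^{-1}|\eq(y)|)| = \kappa/|\Theta|$ via Cramer's rule, observe that the outer monomials all have unit norm over $\eq(y)$, and conclude via the triangle inequality on the fiber function. (One small caveat shared with the paper: Lemma~\ref{LemInnerEquiPoint}(b), which you invoke to upgrade $\eq(y)\notin\cA(f_\kappa)$ to $\eq(y)\in E_y(f_\kappa)$, is stated under the hypothesis that $\cA(f)$ already has genus~1. What is actually needed --- and what the lemma's proof delivers without that hypothesis --- is just the one-line observation that $|m_y|>|m_i|$ for all $i$ forces $\Log|\mathbf{z}|$ into the interior of $\conv\{\eq(0),\ldots,\eq(n)\}$; you should extract that non-circular statement rather than cite the lemma as stated.)

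The genuine problem is your sharpness argument. You correctly compute that over $\eq(y)$, at an extremal phase $\phi$,
$\Fa[\eq(y),f_\kappa](\phi) = \bigl((n+1) - \kappa/|\Theta|\bigr) e^{i\theta}$,
which vanishes exactly at $\kappa = (n+1)|\Theta|$, hence $\eq(y)\in\cA(f_{(n+1)|\Theta|})$. Under the hypothesis that $E_y$ appears finally at $\eq(y)$, this \emph{is} the whole content of sharpness (solidness of $\cA(f_\kappa)$ is equivalent to $\eq(y)\in\cA(f_\kappa)$ here), and it is exactly how the paper argues; the ``$\eq(0)$'' in the theorem statement and in the corresponding line of the paper's proof is a typographical slip for ``$\eq(y)$'', as the displayed computation (all $|m_i|=1$, $|m_y|=\kappa/|\Theta|$) plainly takes place over $\eq(y)$. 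Your additional program to prove $\eq(0)\in\cA(f_{(n+1)|\Theta|})$ by a separate triangle-inequality argument is therefore not needed --- and, as you anticipate, it would not go through as you describe it. At $\eq(0)$ the norms of $m_1,\ldots,m_n,m_y$ are all equal to some $\rho$, so with the \emph{same} extremal phase (which aligns $m_0,\ldots,m_n$ and puts $m_y$ exactly opposite) the fiber evaluates to $\bigl(|m_0(\eq(0))| + n\rho - \rho\bigr)e^{i\theta}$, which is strictly positive; there is no cancellation. The triangle-inequality construction of Theorem~\ref{ThmMaximallSparseCase} and the extremal-phase condition are two different (essentially opposite) phase choices, so invoking both at once, as you propose, is not coherent. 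Drop the $\eq(0)$ detour: your extremal-phase computation at $\eq(y)$ already completes the proof.
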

Note that the question to decide \textit{if} the inner complement component appears finally at $\eq(y)$ will be discussed in the next section.

\begin{proof}
As initial preparation, we note that for $f \in \PD$ and any $\mathbf{z} \in \lf(\fC^*\ri)^n$ with $\Log|\mathbf{z}| = \eq(y)$ we have $|m_y(\mathbf{z})| = |c| / |\Theta|$. Namely, by Lemma \ref{LemEquilibPointCoord} we have	
\[
	\lf|m_y\lf(\mathbf{z}\ri)\ri|	\ =	\ |c| \cdot e^{\langle \eq(y), y \rangle}	\ =	\ |c| \cdot \exp\lf(-\lf\langle\lf(M^t\ri)^{-1} \cdot \Log|b|, y\ri\rangle\ri)
\]
and the claim follows with Cramer's rule.

(a) Let $\mathbf{z} \in \lf(\fC^*\ri)^n$ with $\Log|\mathbf{z}| = \eq(y)$. By Lemma \ref{LemEquilibPointCoord} we have $\lf|m_i\lf(\mathbf{z}\ri)\ri| = 1$ for all $i \in \{0,\ldots,n\}$. If $\kappa = |\Theta|$ we have $\lf|m_y\lf(\mathbf{z}\ri)\ri| = 1$ as well due to initial calculation. Hence by definition of $\eq(y)$ and of the $\eq(k)$ all equilibrium points coincide. The solidness of $\cA(f_\kappa)$ for such $\kappa$ follows from Lemma \ref{LemInnerEquiPoint}.

(b) Assume $\eq(y) \in \cA(f_\kappa)$ for some $\kappa > 0$. Then there exists a $\mathbf{z} \in \lf(\fC^*\ri)^n$ with $\Log|\mathbf{z}| = \eq(y)$ and $f_\kappa(\mathbf{z}) = 0$. By the definition of $\eq(y)$ and our initial calculation, we have $|m_y(\mathbf{z})| = \kappa / |\Theta|$ and $|m_i(\mathbf{z})| = 1$, and thus
\begin{eqnarray}
	\frac{\kappa}{|\Theta|} \cdot e^{i \cdot \lf(\arg(c) + \langle \phi, y \rangle \ri)} + 1 + \sum_{j = 1}^n e^{i \cdot \lf(\arg(b_j) + \langle \phi, \alp{(j)} \rangle \ri)} & =	& 0.
\label{Equ1ProofLemRoughBoundaries}
\end{eqnarray}
But since each exponential term has norm 1, this implies
$\kappa \leq |\Theta| \cdot (n+1)$, contradicting the precondition.

Since $\eq(y) \in \conv\{\eq(0),\ldots,\eq(n)\}$ (Lemma \ref{LemInnerEquiPoint} (b)), 
the precondition  
$\eq(y) \notin \cA(f)$ implies $\eq(y) \in E_y(f)$, and thus 
$E_y(f) \neq \emptyset$.

Assume now that the inner complement component $E_y(f_{\kappa})$ appears finally at $\eq(y)$. 
It suffices to show that $\eq(y) \in \cA(f_{(n+1)|\Theta|})$.
If $\arg(c)$ is in extreme opposition then (by definition of an extremal phase) there exists a $\phi \in [0,2\pi)^n$ satisfying \eqref{Equ1ProofLemRoughBoundaries}
with $\arg(c) + \langle \phi, y \rangle = \pi + \arg(b_j) + \langle \phi, \alp{(j)} \rangle $.
Hence, $\Fa[\eq(0),f](\phi) = - \kappa + (n+1) |\Theta|$ and we have $\eq(0) \in \cA(f_{(n+1)\Theta})$.
\end{proof}

Theorem \ref{ThmRoughBounds} yields the following corollary which is a special case of the class treated in \cite{Nisse3}. 

\begin{cor}
Maximally sparse polynomials with simplex Newton polytope have solid amoebas.
\label{KorMaximallysparse}
\end{cor}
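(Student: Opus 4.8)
The plan is to show that \textbf{Corollary~\ref{KorMaximallysparse}} follows almost immediately from part~(a) of Theorem~\ref{ThmRoughBounds}, once we normalize a maximally sparse polynomial with simplex Newton polytope into the right form. First I would observe that if $f$ is maximally sparse with $\New(f)$ an $n$-simplex $\Delta$, then all coefficients attached to non-vertex lattice points of $\Delta$ vanish, so $f = \sum_{i=0}^n b_i \mathbf{z}^{\alp(i)}$ with $b_i \in \fC^*$ and $\alp(0),\ldots,\alp(n)$ the vertices of $\Delta$. If $\Delta$ has no interior lattice point, then there is no candidate order $y$ for a bounded complement component, and solidness is immediate from the order map: the number of complement components is at most $\#(\New(f)\cap\fZ^n)$, and every complement component's order must be a lattice point of $\New(f)$; since the only lattice points available that can actually occur are the vertices (the boundary non-vertex lattice points give complement components whose non-emptiness we can dispose of by the same equilibrium argument), $\cA(f)$ can have no bounded component. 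Alternatively, and more uniformly, one invokes Theorem~\ref{ThmMaximallSparseCase} directly, which characterizes $\fR^n\setminus\cA(f)$ purely in terms of one monomial norm dominating the sum of the others, and a bounded region can never satisfy such an inequality.

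The cleaner route, and the one I would actually write, goes through Theorem~\ref{ThmRoughBounds}(a): embed $f$ into the class $\PD$ by picking any interior lattice point $y$ of $\Delta$ (one exists after a suitable refinement, or we treat the trivially-interior-point-free case separately) and setting the inner coefficient $c = 0$, i.e., take the parametric family $f_\kappa$ of~\eqref{Equfkappa} at $\kappa = 0$. Since $0 = \kappa \le |\Theta|$ (recall $\Theta$ from~\eqref{EquTheta} is a nonzero product of powers of the $b_i \in \fC^*$, hence $|\Theta| > 0$), Theorem~\ref{ThmRoughBounds}(a) tells us that $\cA(f_0) = \cA(f)$ is solid. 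That is the whole argument.

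The only genuine subtlety — which I expect to be the main (minor) obstacle — is the degenerate case $n=1$, which is excluded from the hypotheses of Theorem~\ref{ThmRoughBounds}, and the case where $\Delta$ happens to contain no interior lattice point at all (so that $\PD$ is not literally defined). Both are handled by hand: for $n=1$ a univariate polynomial $b_0 + b_1 z$ has amoeba a single point, hence solid; and when there are no interior lattice points of $\Delta$, the order map forces every complement component to have a vertex order, so no bounded component can exist, again by the injectivity of the order map into $\New(f)\cap\fZ^n$ together with the fact — used implicitly throughout Section~\ref{SubSecAmoebas} — that a bounded complement component has order in the interior of $\New(f)$. I would state these two reductions in one sentence each and then close with the one-line appeal to Theorem~\ref{ThmRoughBounds}(a).

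Finally, I would double-check the normalization: maximally sparse $f$ may not have $\alp(0)=0$ or $b_0=1$, but multiplying $f$ by a Laurent monomial $b_0^{-1}\mathbf{z}^{-\alp(0)}$ changes neither $\cV(f)$ in $(\fC^*)^n$ nor the property of being solid (it only translates $\cA(f)$), so the hypotheses $\alpha(0)=0$, $b_0=1$ of Theorem~\ref{ThmRoughBounds} are satisfied after this harmless reduction. This completes the proof plan.
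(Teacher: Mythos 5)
Your ``cleaner route'' is exactly the paper's proof: embed the maximally sparse $f$ into the family $f_\kappa$ of \eqref{Equfkappa} at $\kappa=0$, apply Theorem~\ref{ThmRoughBounds}(a) with $|\Theta|>0$, and handle $n=1$ separately by noting the amoeba of a binomial is a point. So the core of your argument is correct and matches the source. Your extra caution about simplices $\Delta$ with no interior lattice point is warranted: the paper's proof silently requires an interior lattice point $y$ so that $\PD$ and the family $f_\kappa$ are defined, and that case does need at least a sentence. Two small repairs, though. First, drop the aside ``one exists after a suitable refinement'' --- you cannot enlarge $\Delta$ without changing $f$ --- so the fallback you then give (treat that case directly) is in fact the only option. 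Second, the parenthetical that boundary non-vertex lattice points ``give complement components whose non-emptiness we can dispose of by the same equilibrium argument'' is not right as stated: the equilibrium machinery of Section~\ref{SecEquilibPoints} concerns only the interior point $y$. The correct observation is either that a complement component whose order lies on $\partial\New(f)$ is unbounded and hence irrelevant to solidness, or, more directly, that Theorem~\ref{ThmMaximallSparseCase} already shows the complement of $\cA(f)$ decomposes into exactly $n+1$ unbounded regions indexed by the vertices. Indeed, as you yourself note, that theorem alone proves the corollary uniformly for all $n\ge 1$ and all simplices, with no case distinction, and is arguably the most economical route.
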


\begin{proof}
For $n = 1$, the amoeba $\cA(f)$ of a maximally sparse polynomial $f$
is a single point. For $n \geq 2$ and $f_{\kappa}$ of the form~\eqref{Equfkappa},
Theorem \ref{ThmRoughBounds} (a) yields that $\cA(f_{\kappa})$ is solid for all $\kappa \leq |\Theta|$. Since $|\Theta| > 0$, $\cA(f_{\kappa})$ is in particular solid for $\kappa = 0$, i.e., if $f$ is maximally sparse.
\end{proof}

\section{Points of appearance of the inner complement component and sharp bounds}
\label{SecSharpBounds}

In the previous section we gave a lower and an upper bound for $\cA(f)$ having genus 0 respectively 1 via investigating the fiber $\Fa[\eq(y),f]$. We have seen that if the inner complement component appears finally at $\eq(y)$, then the upper bound gets sharp. In this section we investigate in general where the complement component appears finally and how this point is related to $\eq(y)$.
Based on this, we provide lower and upper bounds partially improving Theorem \ref{ThmRoughBounds} (see a comparison at the end of the section). We show that,  under some extremal condition, the upper bound is tight and the inner complement component appears finally at a unique, explicitly computable minimum $\mathbf{a}(f_{\kappa})$ which happens to coincide with $\eq(y)$ if and only if the inner lattice point is the barycenter of the Newton polytope (Theorems \ref{ThmImprovedLowerBound}, \ref{ThmSharpUpperBound} and Corollary \ref{KorKroNuCoord}). 

As before, let $\Delta$ be a lattice $n$-simplex and $y$ be in the interior
of $\Delta$. Again, we consider the parametric family $f_\kappa$ as 
introduced in (\ref{Equfkappa}). In the first statement we assume that 
$y=0$.

\begin{thm}
 Let $n \geq 2$ and $f_\kappa$ be a parametric family of polynomials in $\PD[0]$ with $f_\kappa := \kappa \cdot e^{i \cdot \arg(c)} + \sum_{i = 0}^n m_i(\mathbf{z}) = 
\kappa \cdot e^{i \cdot \arg(c)} + \sum_{i = 0}^n b_i \cdot \mathbf{z}^{\alp(i)}$.  Let $\mathbf{w} \in \fR^n$ and assume that 
$|m_0\lf(\Log^{-1}|\mathbf{w}|\ri)| \geq \cdots \geq |m_n\lf(\Log^{-1}|\mathbf{w}|\ri)|$. Then there exists a $\kappa \in \fR_{> 0}$ such that 
	\begin{eqnarray*}
		\kappa \ \geq \ \sum_{i = 0}^{n-2} |m_i\lf(\Log^{-1}|\mathbf{w}|\ri)| \ \text{ and } \ \mathbf{w} \not\in E_y(f_\kappa).
	\end{eqnarray*}
\label{ThmImprovedLowerBound}
\end{thm}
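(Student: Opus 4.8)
The plan is to argue that if $|c|$ is not too large, then the point $\mathbf{w}$ (with the ordering of monomial norms as given) cannot lie in $E_y(f_\kappa)$, because at such a point the inner monomial together with the $n-1$ smallest outer monomials cannot ``win'' against the two largest ones in the sense needed to place $\mathbf{w}$ in the complement of order $y$. Concretely, write $\mu_i := |m_i(\Log^{-1}|\mathbf{w}|)|$ for $0 \le i \le n$, so by hypothesis $\mu_0 \ge \mu_1 \ge \cdots \ge \mu_n > 0$, and let $\nu := |c \cdot (\Log^{-1}|\mathbf{w}|)^y|$ be the norm of the inner monomial at $\mathbf{w}$, which is the quantity we control via $\kappa$ (note $\nu$ is a fixed positive multiple of $\kappa$, the multiplier depending only on $\mathbf{w}$ and $\arg$-data). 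I would first recall the elementary fact that $\mathbf{w} \in E_y(f_\kappa)$ forces, via the order-$y$ characterization (think of $c\mathbf{z}^y$ as the ``distinguished'' monomial and compare with Theorem~\ref{ThmMaximallSparseCase} and the lopsidedness philosophy), a necessary condition relating $\nu$ to the $\mu_i$. The key point is that for $\mathbf{w}$ to be in the complement at all near order $y$, the inner term must not be negligible; but simultaneously, since $E_y$ is where the order is $y$ and the Ronkin gradient equals $y$ which lies in the \emph{interior} of $\Delta$, the inner monomial alone cannot dominate — the geometry of $\cC(f)$ from Lemma~\ref{LemRullgard} and Lemma~\ref{LemRelSpinEquilibPerforated} constrains $\mathbf{w}$ to the interior of the inner simplex.

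The crucial step is the following: I claim that if
\[
	\kappa \ < \ \sum_{i=0}^{n-2} \mu_i
\]
(equivalently $\nu$ below the corresponding bound after absorbing the fixed multiplier — here I am assuming, as the statement implicitly does through the normalization, that $\kappa$ and $\nu$ agree at $\mathbf{w}$, or one tracks the multiplier), then one can construct a phase $\phi \in [0,2\pi)^n$ making $f_\kappa(\Log^{-1}|\mathbf{w}| \cdot e^{i\phi}) = 0$, which would put $\mathbf{w} \in \cA(f_\kappa)$ and hence certainly not in $E_y(f_\kappa)$. The construction mimics the triangle-inequality argument in the proof of Theorem~\ref{ThmMaximallSparseCase}: one groups the monomials into blocks whose norm-sums satisfy the three triangle inequalities, and then uses linear independence of $\alpha(1),\ldots,\alpha(n)$ to realize the required phase differences simultaneously. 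The inequality $\kappa < \sum_{i=0}^{n-2}\mu_i$ is precisely what guarantees that, after including the inner monomial on the ``small'' side, the block sums still form the edge lengths of a (possibly degenerate) triangle: the two largest outer monomials $\mu_{n-1}, \mu_n$ go on one side, and $\kappa$ together with $\mu_0, \ldots, \mu_{n-2}$ distributes across the others, so that no single block exceeds the sum of the rest. Hence a zero of $\Fa$ exists on this fiber.

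Contrapositively, if $\mathbf{w} \in E_y(f_\kappa)$ for \emph{every} $\kappa$, in particular it would have to hold for small $\kappa$, contradicting the above; so there is a $\kappa$ with $\kappa \ge \sum_{i=0}^{n-2}\mu_i$ and $\mathbf{w} \notin E_y(f_\kappa)$. Actually the cleanest packaging is: take $\kappa := \sum_{i=0}^{n-2}\mu_i$ itself (or slightly below), verify the triangle inequalities for the block decomposition $\{c\mathbf{z}^y, m_0,\ldots,m_{n-2}\}$ versus $\{m_{n-1}\}$ versus $\{m_n\}$ — here one checks $\kappa + \sum_{i=0}^{n-2}\mu_i + \mu_{n-1} \ge \mu_n$ (trivial), $\kappa + \sum \mu_i + \mu_n \ge \mu_{n-1}$ (trivial), and $\mu_{n-1} + \mu_n \ge \kappa - (\text{something})$, which is where the precise choice of which monomials land in which block, and the ordering $\mu_0 \ge \cdots \ge \mu_n$, must be used carefully — and conclude $\mathbf{w} \in \cA(f_\kappa)$, so $\mathbf{w}\notin E_y(f_\kappa)$. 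The main obstacle I anticipate is bookkeeping: getting the block decomposition right so that all three triangle inequalities hold exactly at the threshold $\kappa = \sum_{i=0}^{n-2}\mu_i$, and handling the degenerate cases where some $\mu_i$ vanish in the limit or where $\mathbf{w}$ sits on the boundary of $\cA(f_\kappa)$ rather than its interior (these boundary cases still give $\mathbf{w}\notin E_y$, since $E_y$ is open and lies in the complement, but need a separate sentence). One also has to be slightly careful that the constructed zero genuinely forces order $\neq y$ or at least forces $\mathbf{w}\in\cA(f_\kappa)$, which suffices; the order map (\ref{eq:ordermap}) is not needed beyond knowing $\mathbf{w}$ being in the amoeba excludes it from any $E_\alpha$.
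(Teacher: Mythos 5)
Your overall aim---exhibit a specific $\kappa$ of the required size for which the fiber function $\Fa[\mathbf{w},f_\kappa]$ has a zero, so that $\mathbf{w}\in\cA(f_\kappa)$ and hence $\mathbf{w}\notin E_y(f_\kappa)$---is indeed the right target, and it matches what the paper actually does. But the mechanism you propose for producing that zero does not work, for two concrete reasons.

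First, the ``group the terms into blocks satisfying the triangle inequalities, then realize the phases via linear independence'' device is the proof technique of Theorem~\ref{ThmMaximallSparseCase}, and the paper explicitly notes right after that proof that it \emph{does not} extend to supports with an interior lattice point: once the inner monomial $c\,\mathbf{z}^y$ is present, the $n+2$ exponent vectors are affinely dependent (this is the circuit condition), so you cannot realize arbitrary phase assignments of all $n+2$ monomials by choosing $\phi\in[0,2\pi)^n$. There is a rational linear relation among the arguments that you cannot break. Your proposal silently assumes that a phase $\phi$ realizing the prescribed block arguments exists, which is exactly the step that fails.

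Second, even granting the phase-realization step, your specific block decomposition $\{c\,\mathbf{z}^y,m_0,\ldots,m_{n-2}\}$ versus $\{m_{n-1}\}$ versus $\{m_n\}$ does not satisfy the triangle inequalities at the threshold. Setting $\mu_i:=|m_i(\Log^{-1}|\mathbf{w}|)|$, the first block has total norm $\kappa+\sum_{i=0}^{n-2}\mu_i$, and the needed inequality $\kappa+\sum_{i=0}^{n-2}\mu_i\le\mu_{n-1}+\mu_n$ fails for $\kappa=\sum_{i=0}^{n-2}\mu_i$ (it would force $2\sum_{i=0}^{n-2}\mu_i\le\mu_{n-1}+\mu_n$). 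The option ``slightly below'' does not help: a zero at some $\kappa<\sum_{i=0}^{n-2}\mu_i$ gives a witness with $\kappa$ \emph{below} the bound, not $\kappa\ge\sum_{i=0}^{n-2}\mu_i$ as the theorem asserts.

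The paper's actual construction avoids both problems. It uses the dual basis $\alpha(1)^*,\ldots,\alpha(n)^*$ (so $\langle\alpha(j)^*,\alpha(k)\rangle=0$ for $k\notin\{j,0\}$) to decouple the phase controls: it fixes $\lambda_2,\ldots,\lambda_n$ so that $m_2,\ldots,m_n$ all point in direction $\arg(c)+\pi$, and then lets $\lambda_1$ vary, which rotates \emph{both} $m_1$ and the uncontrollable $m_0$. A Rouch\'e-type winding-number argument shows that as $\lambda_1$ runs over $[0,2\pi)$, the sum $m_0+m_1$ attains every argument, in particular $\arg(c)+\pi$. At that phase the fiber value lies on the ray through $e^{i\arg(c)}$, and the paper solves for the $\kappa$ that makes it vanish. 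No triangle-inequality decomposition and no simultaneous phase realization for all $n+2$ terms is needed; the affine dependence is handled structurally by pairing $m_0$ with one other term rather than trying to set its phase independently. To reach the conclusion you would need to rebuild your argument along these lines; the elementary-blocks route is a dead end here.
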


\begin{proof}
Since $\alp(0),\ldots,\alp(n)$ form a simplex, there is a dual basis 
$\alp(1)^*,\ldots,\alp(n)^* \in \fQ^n$ with $\langle \alp(j)^*,\alp(k) \rangle = 0$ for all $k \not\in \{j,0\}$. We will choose $\lambda_1, \ldots, 
\lambda_n \in [0, 2\pi)$ such that for 
$\phi := \sum_{j=1}^n \lambda_j \alpha(j)^*$ we get
$\Fa[\mathbf{w},f_\kappa](\phi) = 0$ for some $\kappa \in \fR_{>0}$
sufficiently large.

We can choose $\lam_2,\ldots,\lam_n \in [0,2\pi)$ with
\begin{eqnarray*}
	e^{i \cdot \lf(\arg(b_j) + \langle \lam_j \alp(j)^*,\alp(j)\rangle\ri)}	& =	& \arg(c) + \pi \text{ for all } j \in \{2,\ldots,n\}.
\end{eqnarray*}
We may finally choose $\lam_1 \in [0,2\pi)$ such that the sum of the two shortest monomials
\begin{eqnarray*}
	|m_0\lf(\Log^{-1}|\mathbf{w}|\ri)| \cdot e^{i \cdot \lf(\arg(b_0) + \sum_{k = 1}^n \langle \lam_k \alp(k)^*,\alp(0)\rangle\ri)} + |m_1\lf(\Log^{-1}|\mathbf{w}|\ri)| \cdot e^{i \cdot \lf(\arg(b_1) + \langle \lam_1 \alp(1)^*,\alp(1)\rangle\ri)}
\end{eqnarray*}
is either zero or a complex number with argument $\arg(c) + \pi$, due to the following Rouch\'{e}-type principle from complex
analysis. Recall that the winding number of a closed curve $\gamma$ in the complex plane around a point $z$ is given by $\frac{1}{2\pi i} \int_{\gamma} \frac{d \zeta}{\zeta - z}$.

\smallskip

\noindent
\emph{Claim.} For $A,B \in \fC$ with $A > B$ and $r,s \ge 1$ the function
  $g(\phi) := A \cdot e^{i \cdot r \phi} + B \cdot e^{i \cdot s \phi}$ with $\phi \in [0,2\pi)$ has a non-zero winding number with
  respect to the origin.

\smallskip

Clearly, the function $A \cdot e^{i \cdot r \phi}$ has a non-zero winding number. Now
assuming that $g$ has a winding number of zero, there would exist some
$t \in (0,1)$ such that $h(\phi) := A \cdot e^{i \cdot r \phi} + t \cdot B \cdot e^{i \cdot s \phi}$ has
a zero $\phi$ outside the origin. This is a contradiction.

Altogether, for $\phi := \sum_{j = 1}^n \lam_i \cdot \alp(j)^*$, we get $\Fa[\mathbf{w},f_\kappa](\phi) = ( \kappa - \sum_{j = 0}^{n-2} |m_j\lf(\Log^{-1}|\mathbf{w}|\ri)| + \xi) \cdot e^{i \cdot \arg(c)}$ with $\xi \in \fR_{< 0}$ for $|m_1\lf(\Log^{-1}|\mathbf{w}|\ri)| > |m_0\lf(\Log^{-1}|\mathbf{w}|\ri)|$ and hence $\xi \in \fR_{\leq 0}$ for $|m_1\lf(\Log^{-1}|\mathbf{w}|\ri)| = |m_0\lf(\Log^{-1}|\mathbf{w}|\ri)|$. Thus, we have $\Fa[\mathbf{w},f_{\kappa}](\phi) = 0$ for $\kappa =  |\xi| + \sum_{j = 0}^{n-2} |m_j\lf(\Log^{-1}|\mathbf{w}|\ri)|$. This yields $\mathbf{w} \not\in E_y(f_{\kappa})$ for such choice of $\kappa$.
\end{proof}

Our goal is to characterize the 
$\kappa$ for which the amoeba $\cA(f_\kappa)$ switches the last time from genus 0 to 1.
We first consider the case of $\arg(c)$ in extreme opposition and then use this case
to provide a bound for the general case.

Let $\arg(c)$ be in extreme opposition for $f_{\kappa}$ (note that this property is independent of
the choice of $\kappa$).
For a point $\mathbf{w} \in \Log\lf|\lf(\fC^*\ri)^n\ri|$, the function $\Fa[\mathbf{w},f_{\kappa}]$
from~\eqref{EquFiber} on the fiber of $\mathbf{w}$ evaluates 
for an extremal phase $\phi$ to
\begin{eqnarray*}
	\Fa[\mathbf{w}, f_\kappa](\phi)	& =	& \left( \kappa \cdot e^{\langle \mathbf{w},y \rangle} - 1 - \sum_{j = 1}^n |b_j| \cdot e^{\lf\langle \mathbf{w}, \alp{(j)} \ri\rangle} \right) \cdot e^{i \cdot \psi}
\end{eqnarray*}
for some angle $\psi \in [0,2\pi)$. Since we are only interested in the
zeros of $\Fa[\mathbf{w}, f_\kappa]$, we can always assume $\psi = 0$.
Clearly, $\mathbf{w} \in E_y(f_{\kappa})$ whenever 
$\kappa \cdot e^{\langle \mathbf{w},y \rangle} >  1 + \sum_{j = 1}^n |b_j| \cdot e^{\lf\langle \mathbf{w}, \alp{(j)} \ri\rangle}$.

Since an extremal phase $\phi$ yields the minimal real value of a fiber $\Fa[\mathbf{w}, f_\kappa]$ and since $\cA(f_\kappa)$ has genus 1 if $E_y(f_{\kappa}) \neq \emptyset$, the $\kappa^*$ where $\cA(f_\kappa)$ switches its genus the last time is given by
\begin{eqnarray}
	\min_{\mathbf{w} \in \Log\lf|\lf(\fC^*\ri)^n\ri|}\lf(e^{- \langle \mathbf{w},y \rangle} + \sum_{j = 1}^n |b_j| \cdot e^{\lf\langle \mathbf{w}, \alp{(j)} - y \ri\rangle}\ri) \in \fR_{> 0}\label{GlobalMinimum}.
\end{eqnarray}
The minimizer $\mathbf{w}^*$ then has to be the point $\mathbf{a}(f_{\kappa})$ where the inner complement component finally appears for $\arg(c)$
in extreme opposition, 
since $\mathbf{w}^* \notin E_y(f_{\kappa^*})$, $\mathbf{w}^* \in E_y(f_{\kappa})$ for all $\kappa > \kappa^*$ and for all $\mathbf{w} \neq \textbf{w}^*$ there is a 
$\kappa > \kappa^*$ such that $\mathbf{w} \notin E_y(f_{\kappa})$. 

In the following set $\wh M := \lf(\alp(j)_i - y_i\ri)_{1 \leq i,j \leq n}$ and $\wh M_j$ as the matrix obtained by replacing the $j$-th column of $\wh M$ by $y$.

\begin{lem} Let $\alpha(0) = 0$, $b_0 = 1$, and
$\arg(c)$ be in extreme opposition for $f_{\kappa}$.
The point $\mathbf{a}(f_{\kappa})$ where the inner complement finally appears is
given by $\eq(y) + \mathbf{s}^*$, where $\mathbf{s}^*$ is the solution of the system of linear equations
 \begin{eqnarray}
	M^t \cdot \mathbf{s}	& =	& (\gamma_1,\ldots,\gamma_n)^t 
\label{linequations}
\end{eqnarray}
with $\gamma_j := \log\lf(\det (\wh M_j) / \det (\wh M) \ri)$ 
for $j \in \{1,\ldots,n\}$.
\label{LemSolveMinimum}
\end{lem}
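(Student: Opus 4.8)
The plan is to identify $\mathbf{a}(f_{\kappa})$ with the minimizer of the convex function from \eqref{GlobalMinimum} and then solve the resulting critical-point equation in closed form. Using $\alp(0)=0$ and $b_0=1$, write
\[
  g(\mathbf{w}) \ := \ e^{-\langle \mathbf{w},y\rangle} + \sum_{j=1}^n |b_j|\cdot e^{\langle \mathbf{w},\alp(j)-y\rangle} \ = \ \sum_{j=0}^n p_j(\mathbf{w}), \qquad p_j(\mathbf{w}) := |b_j|\cdot e^{\langle \mathbf{w},\alp(j)-y\rangle} .
\]
By the discussion preceding the statement, $\mathbf{a}(f_{\kappa})$ is exactly the minimizer $\mathbf{w}^*$ of $g$. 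First I would record that this minimizer exists and is unique: $g$ is a sum of exponentials of affine functions, hence convex, and it is strictly convex because the differences $\alp(j)-\alp(0)=\alp(j)$ ($1\le j\le n$) span $\fR^n$, so the Hessian $\sum_j p_j(\mathbf{w})(\alp(j)-y)(\alp(j)-y)^t$ is positive definite; moreover $g$ is coercive, since $y$ lies in the interior of $\conv\{\alp(0),\ldots,\alp(n)\}$ and therefore $\max_{0\le j\le n}\langle u,\alp(j)-y\rangle>0$ for every direction $u\neq 0$, whence $g(tu)\to\infty$ as $t\to\infty$.

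Next I would impose $\nabla g(\mathbf{w}^*)=0$. Since $\nabla p_j(\mathbf{w})=(\alp(j)-y)\,p_j(\mathbf{w})$ and each $p_j(\mathbf{w}^*)>0$, the critical-point condition reads
\[
  \sum_{j=0}^n p_j(\mathbf{w}^*)\,(\alp(j)-y) \ = \ 0 .
\]
Because $y$ is an interior point of the simplex, it has a unique representation $y=\sum_{j=0}^n\mu_j\alp(j)$ with all $\mu_j>0$ and $\sum_j\mu_j=1$, i.e.\ $\sum_{j=0}^n\mu_j(\alp(j)-y)=0$. The linear map $\fR^{n+1}\to\fR^n$, $(t_0,\ldots,t_n)\mapsto\sum_j t_j(\alp(j)-y)$, has rank $n$ (again since $\alp(1),\ldots,\alp(n)$ are independent), so its kernel is the line $\fR\,(\mu_0,\ldots,\mu_n)$. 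Hence the positive vector $(p_0(\mathbf{w}^*),\ldots,p_n(\mathbf{w}^*))$ is a positive scalar multiple of $(\mu_0,\ldots,\mu_n)$, that is, $p_j(\mathbf{w}^*)/p_0(\mathbf{w}^*)=\mu_j/\mu_0$ for all $j$.

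It remains to unwind these equations. From $p_j(\mathbf{w})/p_0(\mathbf{w})=|b_j|\,e^{\langle \mathbf{w},\alp(j)\rangle}$ we obtain $\langle \mathbf{w}^*,\alp(j)\rangle=\log(\mu_j/\mu_0)-\log|b_j|$ for $j=1,\ldots,n$, i.e.\ $M^t\mathbf{w}^*=(\log(\mu_j/\mu_0)-\log|b_j|)_{j=1}^n$. Subtracting the system $M^t\,\eq(y)=-\Log|(b_1,\ldots,b_n)^t|$ of Lemma~\ref{LemEquilibPointCoord}, the vector $\mathbf{s}^*:=\mathbf{w}^*-\eq(y)$ satisfies $M^t\mathbf{s}^*=(\log(\mu_j/\mu_0))_{j=1}^n$. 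Finally I would identify $\mu_j/\mu_0$ with the determinant ratio: writing $\alp(0)-y=-y$, the relation $\sum_{j=0}^n\mu_j(\alp(j)-y)=0$ becomes $\wh M\,(\mu_1,\ldots,\mu_n)^t=\mu_0\,y$, and since $\det(\wh M)\neq 0$, Cramer's rule gives $\mu_j/\mu_0=\det(\wh M_j)/\det(\wh M)$ (in particular this ratio is positive, so $\gamma_j$ is well defined). Thus $M^t\mathbf{s}^*=(\gamma_1,\ldots,\gamma_n)^t$, which is \eqref{linequations}.

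I expect the only genuinely delicate step to be the middle one: arguing that the critical-point equation pins down $(p_0(\mathbf{w}^*),\ldots,p_n(\mathbf{w}^*))$, up to a positive scalar, as the barycentric-coordinate vector of $y$. Both the interiority of $y$ (positivity of the $\mu_j$, which rules out sign ambiguities) and the one-dimensionality of the kernel are essential here; the remaining ingredients — convex calculus for existence/uniqueness of $\mathbf{w}^*$ and Cramer's rule for the determinant form — are routine.
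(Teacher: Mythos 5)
Your proof is correct and follows essentially the same route as the paper: identify $\mathbf{a}(f_{\kappa})$ with the minimizer of the convex objective from \eqref{GlobalMinimum}, set the gradient to zero, and solve the resulting linear system via Cramer's rule. The only presentational differences are that the paper first substitutes $\mathbf{w} = \eq(y) + \mathbf{s}$ (which cancels the $|b_j|$ factors before differentiating, so Cramer's rule is applied directly to $\wh M\,(e^{\langle\mathbf{s},\alp(j)\rangle})_j = y$), whereas you differentiate in $\mathbf{w}$ and subtract the $\eq(y)$ system afterward; and your intermediate reinterpretation of the critical-point vector $(p_0,\ldots,p_n)$ as (a positive multiple of) the barycentric coordinates of $y$ is a conceptually pleasant step that the paper does not spell out, though it encodes the same algebra. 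Your explicit treatment of existence and uniqueness via strict convexity and coercivity is slightly more careful than the paper's brief remark that the critical point is unique and the function tends to infinity.
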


\begin{proof}
It suffices to show that $\eq(y) + \mathbf{s}^*$ solves the problem (\ref{GlobalMinimum}). Substituting $\mathbf{w} = \eq(y) + \mathbf{s}$ 
into~\eqref{GlobalMinimum} and applying Lemma \ref{LemEquilibPointCoord} and Theorem 
\ref{ThmRoughBounds} simplifies the problem to
\begin{eqnarray*}
  |\Theta| \cdot \min_{\mathbf{s} \in \Log\lf|\lf(\fC^*\ri)^n\ri|} \lf(e^{- \langle \mathbf{s},y \rangle} + \sum_{j = 1}^n e^{\lf\langle \mathbf{s}, \alp{(j)} - y \ri\rangle}\ri).
\end{eqnarray*}

To compute the global minimum of $e^{- \langle \mathbf{s},y \rangle} + \sum_{j = 1}^n e^{\lf\langle \mathbf{s}, \alp{(j)} - y \ri\rangle}$ we observe that the partial derivatives
\begin{eqnarray*}
	\frac{\partial f_\kappa}{\partial \, s_i}	& =	& -y_i \cdot e^{- \langle \mathbf{s},y \rangle} + \sum_{j = 1}^n \lf(\alp{(j)}_i - y_i\ri) \cdot e^{\lf\langle \mathbf{s}, \alp{(j)} - y \ri\rangle}
\end{eqnarray*}
vanish if and only if
$\sum_{j = 1}^n \lf(\alp{(j)}_i - y_i\ri) \cdot e^{\lf\langle \mathbf{s}, \alp{(j)}\ri\rangle} = y_i$ for all $i \in \{1,\ldots,n\}$.
We obtain $\wh M \cdot  \lf(e^{\lf\langle \mathbf{s}, \alp{(1)}\ri\rangle},\ldots, e^{\lf\langle \mathbf{s}, \alp{(n)}\ri\rangle}\ri)^t = y,$
and hence $e^{\lf\langle \mathbf{s}, \alp{(j)}\ri\rangle} = \det \wh M_j / \det \wh M$ for $j \in \{1,\ldots,n\}$. Setting 
$\gamma_j := \log \det \wh M_j - \log \det \wh M > 0$ 
yields $\lf\langle \mathbf{s}, \alp{(j)}\ri\rangle = \gamma_j$. Thus, we obtain a system of linear equations~\eqref{linequations}.
Since its solution is unique and $\lim_{|\mathbf{s}| \ra \infty} f(\mathbf{s}) = \infty$ this critical point has to be a minimum.
\end{proof}

Note that, by Lemma \ref{LemEquilibPointCoord} and \ref{LemSolveMinimum}, the point $\mathbf{a}(f_{\kappa})$ is the solution of the linear system
\begin{eqnarray}
	M^t \cdot \textbf{x}	& =	& (\gamma_1 - \log|b_1|,\ldots,\gamma_n - \log|b_n|)^t \label{EquGlobalMinimumCoord}
\end{eqnarray}
and hence may be computed explicitly in terms of the coefficients and exponents of $f$.

\begin{cor}
Let $\arg(c)$ be in extreme opposition for $f_{\kappa}$.
The point $\mathbf{a}(f_{\kappa})$ where the inner complement component appears finally coincides with the equilibrium point $\eq(0)$ if and only if
\[
	\sum_{j = 1}^n \alp{(j)} \ = \ (n + 1) \cdot y.
\]
\label{KorKroNuCoord}
\end{cor}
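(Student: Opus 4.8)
The plan is to obtain the equivalence by unwinding Lemma~\ref{LemSolveMinimum} with the help of Cramer's rule; no new idea is needed beyond what that lemma already supplies. I keep the standing normalization $\alpha(0)=0$, $b_0=1$ throughout. Lemma~\ref{LemSolveMinimum} tells us that, for $\arg(c)$ in extreme opposition, the point $\mathbf{a}(f_\kappa)$ where the inner complement component finally appears equals $\eq(y)+\mathbf{s}^*$, where $\mathbf{s}^*$ is the unique solution of $M^t\cdot\mathbf{s}=(\gamma_1,\ldots,\gamma_n)^t$ with $\gamma_j=\log(\det(\wh M_j)/\det(\wh M))$. Since $M$ is nonsingular, the first step is to observe that $\mathbf{a}(f_\kappa)=\eq(y)$ holds if and only if $\mathbf{s}^*=\mathbf{0}$, which in turn holds if and only if $\gamma_j=0$ for every $j\in\{1,\ldots,n\}$, that is, $\det(\wh M_j)=\det(\wh M)$ for all $j$.

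The second step is to read these determinant identities through Cramer's rule. Recall that $\wh M$ has columns $\alpha(1)-y,\ldots,\alpha(n)-y$ and that $\wh M_j$ is obtained from $\wh M$ by replacing its $j$-th column by $y$; since $\det(\wh M)\neq 0$ (this is implicit in Lemma~\ref{LemSolveMinimum}, where the $\gamma_j$ are defined, and ultimately rests on $y$ lying in the interior of $\Delta$), the quotient $\det(\wh M_j)/\det(\wh M)$ is exactly the $j$-th coordinate of the solution $\mathbf{u}\in\fR^n$ of $\wh M\cdot\mathbf{u}=y$. Hence $\gamma_j=0$ for all $j$ is equivalent to $\mathbf{u}=(1,\ldots,1)^t$, i.e.\ to $\wh M\cdot(1,\ldots,1)^t=y$, which expands to $\sum_{j=1}^n(\alpha(j)-y)=y$ and therefore to $\sum_{j=1}^n\alpha(j)=(n+1)\cdot y$. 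Chaining this with the equivalence from the first step yields the claim; since $\alpha(0)=0$, this last relation says precisely that $y$ is the barycenter of $\conv\{\alpha(0),\ldots,\alpha(n)\}$, as advertised at the beginning of this section.

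I do not expect a genuine obstacle: the substance is already packaged in Lemma~\ref{LemSolveMinimum}, and the only care needed is bookkeeping---keeping straight which matrix ($M$ versus $\wh M$) and which transpose occurs where, and confirming $\det(\wh M)\neq 0$ so that Cramer's rule applies. It is worth recording explicitly that the normalization $\alpha(0)=0$, $b_0=1$ costs no generality, since multiplying $f$ by a Laurent monomial translates $\mathbf{a}(f_\kappa)$ and $\eq(y)$ by the same vector and leaves the affine relation $\sum_{j=1}^n\alpha(j)=(n+1)\cdot y$ (once $\alpha(0)$ is placed at the origin) unchanged. As an alternative one could argue directly from the coordinate description~\eqref{EquGlobalMinimumCoord} of $\mathbf{a}(f_\kappa)$ together with the description of $\eq(y)$ in Lemma~\ref{LemEquilibPointCoord}: both are solutions of systems $M^t\mathbf{x}=\cdots$ whose right-hand sides differ by $(\gamma_1,\ldots,\gamma_n)^t$, so equality of the two points is again equivalent to all $\gamma_j$ vanishing, and one finishes as above.
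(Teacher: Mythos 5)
Your argument is correct and takes essentially the same route as the paper's own proof: both reduce the claim, via Lemma~\ref{LemSolveMinimum}, to whether $\mathbf{s}^*=\mathbf{0}$, which by the critical-point equation (equivalently, by Cramer's rule on $\wh M\mathbf{u}=y$) holds precisely when $\sum_{j=1}^n(\alpha(j)-y)=y$, i.e.\ $\sum_{j=1}^n\alpha(j)=(n+1)y$. One small remark: the Corollary as printed says $\eq(0)$, but both your proof and the paper's (as well as the introduction to Section~\ref{SecSharpBounds}) actually establish the equality $\mathbf{a}(f_\kappa)=\eq(y)$, which is evidently the intended statement.
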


\begin{proof} Since $b_0 \in \fC^*$ and $\cV(f) \subset (\fC^*)^n$ we may assume $\alpha(0) = 0$, $b_0 = 1$ (otherwise devide $f$ by $b_0 \cdot \mathbf{z}^{\alp(0)}$). Then the result follows from $\sum_{j = 1}^n \lf(\alp{(j)}_i - y_i\ri) \cdot e^{\lf\langle \mathbf{s}, \alp{(j)}\ri\rangle} = y_i$ for all $i \in \{1,\ldots,n\}$.
\end{proof}

With these statements we can prove the main theorem of this section.

\begin{thm}
Let $f_\kappa$ be a parametric family of polynomials in $\PD$ of the form (\ref{Equfkappa}) with $\alpha(0) = 0$, $b_0 = 1$, 
let $\arg(c)$ be in extreme opposition and set
\begin{eqnarray}
	\wh{\Theta}	& =	& \prod_{i = 1}^n \lf(\frac{\det (\wh M) \cdot b_i}{\det\lf(\wh M_i\ri)}\ri)^{\det \lf(M_i\ri)/\det(M)}.
\label{thetahat}
\end{eqnarray}
$\cA(f_{\kappa})$ switches the last time from genus 0 to 1 at
\begin{eqnarray}
 \label{eq:solidbound}
	\kappa	& =	& |\wh{\Theta}| \cdot \lf(1 + \sum_{j = 1}^n \frac{\det \lf(\wh M_j\ri)}{\det (\wh M)}\ri).
\end{eqnarray}
For all other choices of $\arg(c)$ we have: If $\cA(f_{\kappa})$ is solid, then $\kappa$ is strictly 
bounded from above by the right hand side of \eqref{eq:solidbound}.
\label{ThmSharpUpperBound}
\end{thm}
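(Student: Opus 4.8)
The plan is to combine the explicit description of the appearance point $\mathbf{a}(f_\kappa)$ from Lemma~\ref{LemSolveMinimum} with the reduction of the genus-switching problem to the minimization problem~\eqref{GlobalMinimum}. First I would treat the case where $\arg(c)$ is in extreme opposition. Recall that the discussion preceding Lemma~\ref{LemSolveMinimum} established that $\cA(f_\kappa)$ switches the last time from genus $0$ to $1$ precisely at the value $\kappa^*$ equal to the global minimum in~\eqref{GlobalMinimum}, attained at $\mathbf{w}^* = \mathbf{a}(f_\kappa)$. So it remains to evaluate the objective function $e^{-\langle\mathbf{w},y\rangle} + \sum_{j=1}^n |b_j| e^{\langle\mathbf{w},\alp(j)-y\rangle}$ at the minimizer, which by Lemma~\ref{LemSolveMinimum} is $\mathbf{w}^* = \eq(y) + \mathbf{s}^*$ with $\mathbf{s}^*$ solving~\eqref{linequations}.

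The computation proceeds as follows. Substituting $\mathbf{w}^* = \eq(y)+\mathbf{s}^*$ and using the simplification from the proof of Lemma~\ref{LemSolveMinimum}, the minimum equals $|\Theta| \cdot \bigl(e^{-\langle\mathbf{s}^*,y\rangle} + \sum_{j=1}^n e^{\langle\mathbf{s}^*,\alp(j)-y\rangle}\bigr)$. From the critical point equations we know $e^{\langle\mathbf{s}^*,\alp(j)\rangle} = \det(\wh M_j)/\det(\wh M)$ for $j \in \{1,\ldots,n\}$, and $\langle\mathbf{s}^*,\alp(0)\rangle = \langle\mathbf{s}^*,0\rangle = 0$, so $e^{-\langle\mathbf{s}^*,y\rangle} = e^{\langle\mathbf{s}^*,\alp(0)-y\rangle}$; one then expresses $e^{-\langle\mathbf{s}^*,y\rangle}$ via Cramer's rule applied to the system $M^t\mathbf{s}^* = (\gamma_1,\ldots,\gamma_n)^t$, obtaining $e^{-\langle\mathbf{s}^*,y\rangle} = \prod_{i=1}^n (\det(\wh M_i)/\det(\wh M))^{-\det(M_i)/\det(M)}$ by the same manipulation used for $\Theta$ in the proof of Theorem~\ref{ThmRoughBounds}. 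Multiplying this prefactor into $|\Theta|$ yields exactly $|\wh\Theta|$ as defined in~\eqref{thetahat}. Similarly, $e^{\langle\mathbf{s}^*,\alp(j)-y\rangle} = e^{\langle\mathbf{s}^*,\alp(j)\rangle}\cdot e^{-\langle\mathbf{s}^*,y\rangle}$ contributes the factor $\det(\wh M_j)/\det(\wh M)$ relative to the $j=0$ term, so the bracketed sum becomes $1 + \sum_{j=1}^n \det(\wh M_j)/\det(\wh M)$, giving precisely the right-hand side of~\eqref{eq:solidbound}. The positivity of all $\det(\wh M_j)/\det(\wh M)$ (equivalently $\gamma_j > 0$), already noted in Lemma~\ref{LemSolveMinimum} since $y$ is interior to $\Delta$, guarantees the expression is a well-defined positive real number and that the logarithms make sense.

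For the second assertion, suppose $\arg(c)$ is \emph{not} in extreme opposition and $\cA(f_\kappa)$ is solid. The key observation is that for a fixed $\mathbf{w}$, among all admissible phases $\phi$ the extremal phase minimizes the real value $\Fa[\mathbf{w},f_\kappa](\phi)$ — but an extremal phase is attainable only when $\arg(c)$ is in extreme opposition. For a non-extremal $\arg(c)$, the minimum over $\phi$ of the modulus of the inner contribution relative to the outer ones is strictly larger, so the threshold $\kappa$ at which $\mathbf{w}$ enters $E_y(f_\kappa)$ is strictly smaller than the corresponding threshold in the extreme-opposition case; taking the infimum over $\mathbf{w}$ preserves the strict inequality because the extreme-opposition minimizer $\mathbf{a}(f_\kappa)$ is a single point. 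Hence if $\cA(f_\kappa)$ is still solid, $\kappa$ must be strictly below the value in~\eqref{eq:solidbound}.

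I expect the main obstacle to be the bookkeeping in the second part: making the comparison ``non-extremal phase forces a strictly smaller genus-switching threshold'' fully rigorous requires care, since one must rule out that the infimum over $\mathbf{w}$ degenerates and the strict inequality collapses into equality in the limit. The cleanest route is to fix the minimizer $\mathbf{w}^* = \mathbf{a}(f_{\kappa})$ from the extreme-opposition analysis, evaluate $\Fa[\mathbf{w}^*, f_\kappa]$ for the given non-extremal $\arg(c)$, and show directly that $\mathbf{w}^* \in E_y(f_\kappa)$ already for $\kappa$ equal to the right-hand side of~\eqref{eq:solidbound}, because at $\mathbf{w}^*$ the outer monomials cannot all be brought into extreme opposition with the inner one and hence $|f_\kappa(\mathbf{z})| > 0$ on the whole fiber for that $\kappa$; strictness then follows from openness of $E_y$ (noted in Section~\ref{SubSecAmoebas}). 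The algebraic identifications in the first part are routine given the Cramer's-rule computations already carried out for $\Theta$.
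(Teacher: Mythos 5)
Your proposal takes essentially the same route as the paper's proof: reduce to the minimization problem~\eqref{GlobalMinimum}, evaluate the objective at the minimizer $\mathbf{a}(f_\kappa) = \eq(y)+\mathbf{s}^*$ from Lemma~\ref{LemSolveMinimum} via the Cramer's-rule identities for $e^{-\langle\mathbf{s}^*,y\rangle}$ and $e^{\langle\mathbf{s}^*,\alp(j)\rangle}$ to arrive at~\eqref{eq:solidbound}, and for non-extremal $\arg(c)$ fix the same point $\mathbf{a}(f_\kappa)$ and argue that the fiber function cannot vanish there for $\kappa$ at or above the threshold because the monomials cannot simultaneously align in extreme opposition. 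Your parenthetical appeal to ``openness of $E_y$'' for strictness is unnecessary and slightly off-target (the strict bound follows directly from the failure of alignment at $\kappa=\kappa^*$ together with lopsidedness for $\kappa>\kappa^*$), but the ``cleanest route'' you end with is exactly the paper's argument.
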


\begin{proof}
Let $\arg(c)$ be in extreme opposition. By Lemma \ref{LemSolveMinimum} it is easy to verify that for an extremal phase $\phi' \in [0,2\pi)^n$ we have $e^{- \lan \mathbf{a}(f_{\kappa}),y \ran} \cdot e^{i \cdot \langle \phi',y\rangle} =  \wh{\Theta}$. We know that $\cA(f_{\kappa})$ 
switches the last time from genus 0 to 1 at
\begin{eqnarray*}
	\kappa^*	&   =	& \min_{s \in \Log\lf|\lf(\fC^*\ri)^n\ri|} \lf(e^{- \langle \eq(y) + s,y \rangle} + \sum_{j = 1}^n |b_j| \cdot e^{\lf\langle \eq(y) + s, \alp{(j)} - y \ri\rangle}\ri) \, .
\end{eqnarray*}
Due to above calculation of $\wh{\Theta}$ and \eqref{EquGlobalMinimumCoord} this is equivalent to~\eqref{eq:solidbound}.

Let $\arg(c)$ be not in extreme opposition. 
We have $E_y(f_\kappa) = \emptyset$ if and only if $\Fa[\mathbf{a}(f_\kappa),f_\kappa] \cap \cV(f_\kappa) \neq \emptyset$. Let $\phi \in [0,2\pi)^n$ be a zero
of $\Fa[\mathbf{a}(f_\kappa),f_\kappa]$. Since $\arg(c)$ is not in extreme opposition, not all outer monomial have the same argument at $\Fa[\mathbf{a}(f_\kappa),f_\kappa](\phi)$ and therefore $|\Fa[\mathbf{a}(f_\kappa),f_\kappa](\phi)| < |\wh{\Theta}| \cdot \lf(1 + \sum_{j = 1}^n \frac{\det \lf(\wh M_j\ri)}{\det (\wh M)} \ri)$.
\end{proof}

It follows from the above derivations that the upper bound for polynomials in $\PD$ to be solid, which we computed in Theorem \ref{ThmSharpUpperBound} improves the upper bound from Theorem \ref{ThmRoughBounds} (b)
in all cases but the one in Corollary~\ref{KorKroNuCoord}.

For the lower bound computed in Theorem \ref{ThmImprovedLowerBound} notice that it holds for all $\kappa$, and hence improves the lower bound from Theorem \ref{ThmRoughBounds} (a), if there exists only one $\kappa$ such that $f_\kappa \in \partial U_y^{A}$ (i.e., if the genus switches only once from 0 to 1 for $\kappa$ running from 0 to $\infty$). 
If this is the case is closely related to the question whether the set $U_y^A$ is connected, which we already mentioned in the introduction to be an open problem.

\section{Lopsidedness and {\it A}-discriminants}
\label{SecLopsidedness}
In the following section we investigate the genus 1 space of amoebas from two other points of view: lopsidedness and $A$-discriminants.

In \cite{Purb1} Purbhoo introduced the concept of \textit{lopsidedness} to provide certificates for points outside of an amoeba 
(see \cite{theobald-wolff-sos} for connections to certificates
by the real Nullstellensatz and sums of squares).
Based on these results and Theorem \ref{ThmSharpUpperBound} we develop a sufficient criterion for amoebas of polynomials in $\PD$ to have genus 1. We recall Purbhoo's main result. Let
$f(\mathbf{z}) = \sum_{i = 1}^d m_i(\mathbf{z}) \in \fC[\mathbf{z}^{\pm 1}]$
be a Laurent polynomial with monomials $m_1,\ldots,m_d$. For a given $\mathbf{w} \in \fR^n$ we define $f\{\mathbf{w}\}$ to be the following sequence of numbers in $\fR_{\geq 0}$:
\begin{eqnarray*}
	f\{\mathbf{w}\}	& :=	& \lf(|m_1(\Log^{-1}|\mathbf{w}|)|,\ldots,|m_d(\Log^{-1}|\mathbf{w}|)|\ri).
\end{eqnarray*}

A sequence of positive real numbers is called \emph{lopsided} if one of the numbers is greater than the sum of all the others. Defining
\begin{eqnarray*}
	\cL\cA(f)	& :=	& \lf\{\mathbf{w} \in \fR^n \ : \ f\{\mathbf{w}\} \text{ is not lopsided}\ri\},
\end{eqnarray*}
it is easy to see that $\cA(f) \subseteq \cL\cA(f)$.

In order to establish a converging hierarchy of approximations of $\cA(f)$, set

\begin{eqnarray*}
	\ti{f}_r(\mathbf{z})	& :=	& \prod_{k_1 = 0}^{r-1} \cdots \prod_{k_d = 0}^{r-1} f\lf(e^{2\pi i k_1 / r} z_1,\ldots,e^{2\pi i k_d / r} z_n\ri) \\
				& =	& \res \lf(\res \lf(\ldots \res(f(u_1 z_1,\ldots,u_d z_d),u_1^r - 1),\ldots, u_{d-1}^r - 1\ri),u_d^r - 1\ri) \, ,
\end{eqnarray*}
where $\res(f,x)$ denotes the resultant with respect to $x$. It is easy to see that $\cA(f) = \cA(\ti{f}_r)$. Then the following theorem holds (see \cite[Theorem 1]{Purb1}).

\begin{thm}
For $n \ra \infty$ the family $\cL\cA(\ti{f}_r)$ converges uniformly to $\cA(f)$. There exists an integer $N$ such that to compute $\cA(f)$ within $\eps > 0$, it suffices to compute $\cL\cA(\ti{f}_r)$ for any $d \geq N$. Moreover, $N$ depends only on $\eps$ and the Newton polytope (or degree) of $f$ and can be computed explicitly from these data.
\label{ThmPurb1}
\end{thm}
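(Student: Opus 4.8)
(Read the running index in the statement as $r$, the parameter of the construction $\ti{f}_r$.) One inclusion is for free: $\ti{f}_r(\mathbf{z}) = 0$ holds exactly when $f$ vanishes at $\lf(e^{2\pi i k_1/r}z_1,\ldots,e^{2\pi i k_n/r}z_n\ri)$ for some $(k_1,\ldots,k_n)$, and that point has the same image under $\Log|\cdot|$ as $\mathbf{z}$; hence $\cA(\ti{f}_r) = \cA(f)$, so $\cA(f) = \cA(\ti{f}_r) \subseteq \cL\cA(\ti{f}_r)$ for all $r$. Thus everything reduces to showing: for each $\eps > 0$ there is an $N = N(\eps,\New(f))$ such that, for $r \ge N$, no point at distance $\ge \eps$ from $\cA(f)$ lies in $\cL\cA(\ti{f}_r)$. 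Granting this, the uniform (Hausdorff) convergence and the asserted dependence of $N$ follow by bookkeeping.

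Fix $\mathbf{w}$ with $\operatorname{dist}(\mathbf{w},\cA(f)) \ge \eps$, put $\alpha := \ord_f(\mathbf{w})$, and recall that $\mathbf{w}$ lies in the complement component $E_\alpha(f)$, so that $N_f$ is convex globally and affine with gradient $\alpha$ on the ball $B(\mathbf{w},\eps) \subseteq E_\alpha(f)$. Whether $\ti{f}_r\{\mathbf{w}\}$ is lopsided depends only on the moduli $|c_\beta|\,e^{\lan\beta,\mathbf{w}\ran}$ of the monomials $c_\beta\mathbf{z}^\beta$ of $\ti{f}_r$, where $\beta$ ranges over the at most $\#\lf(r^n\New(f)\cap\fZ^n\ri)$ lattice points of $\New(\ti{f}_r) = r^n\New(f)$ --- a number only polynomial in $r$. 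The plan rests on the key estimate
\[
  \frac{1}{r^n}\log\lf(|c_\beta|\,e^{\lan\beta,\mathbf{w}\ran}\ri) = \psi_{\mathbf{w}}(\beta/r^n) + o(1) \quad (r\to\infty),
\]
uniformly in $\beta$ with an effective error term, where $\psi_{\mathbf{w}}(\gamma) := \lan\gamma,\mathbf{w}\ran - N_f^{*}(\gamma)$ and $N_f^{*}$ is the Legendre conjugate of the Ronkin function (so $-N_f^{*}$ is concave on $\New(f)$, equals $\log|b_\alpha|$ at vertices $\alpha$, and in general interpolates the Ronkin coefficients $\beta_\alpha$). Two consequences of the $\eps$-deepness of $\mathbf{w}$ then finish the argument. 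First, $\psi_{\mathbf{w}}(\alpha) = N_f(\mathbf{w})$, and restricting the infimum in $-N_f^{*}(\gamma) = \inf_{\mathbf{v}}\lf(N_f(\mathbf{v}) - \lan\gamma,\mathbf{v}\ran\ri)$ to $\mathbf{v}\in B(\mathbf{w},\eps)$ --- where $N_f$ is affine with gradient $\alpha$ --- yields at once $\psi_{\mathbf{w}}(\gamma) \le N_f(\mathbf{w}) - \eps\,|\gamma - \alpha|$ for every $\gamma\in\New(f)$. Second, the exponent $r^n\alpha$ does occur in $\ti{f}_r$, with $|c_{r^n\alpha}| = |b_\alpha|^{r^n}$ when $\alpha$ is a vertex and with $\frac{1}{r^n}\log\lf(|c_{r^n\alpha}|\,e^{\lan r^n\alpha,\mathbf{w}\ran}\ri) \to N_f(\mathbf{w})$ in general. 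Combining: the monomial of exponent $r^n\alpha$ has modulus $e^{r^n N_f(\mathbf{w}) + o(r^n)}$ over $\mathbf{w}$, every other monomial has modulus at most $e^{r^n N_f(\mathbf{w}) - r^n\eps\,|\beta/r^n - \alpha| + o(r^n)}$, and there are only polynomially many of them; hence for $r \ge N(\eps,\New(f))$ the $r^n\alpha$-monomial exceeds the sum of the moduli of all the others, $\ti{f}_r\{\mathbf{w}\}$ is lopsided, and $\mathbf{w}\notin\cL\cA(\ti{f}_r)$.

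All constants that enter --- the error term in the key estimate, which is governed by the Lipschitz constant of $\log|f|$ on the compact fibers over the $\eps$-far region (uniformly bounded there), the margin $\eps$ above, and the monomial count $\#(r^n\New(f)\cap\fZ^n)$ --- depend only on $\eps$ and $\New(f)$; hence so does $N$, and it can be written down explicitly, which is exactly the assertion of the last two sentences. Applying the argument with a fixed $\eps$ gives $\cL\cA(\ti{f}_r)\subseteq\lf\{\mathbf{w}:\operatorname{dist}(\mathbf{w},\cA(f)) < \eps\ri\}$ for $r\ge N(\eps)$, which with $\cA(f)\subseteq\cL\cA(\ti{f}_r)$ is the uniform convergence.

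The real obstacle is the key estimate, and within it the upper bound on the coefficients $|c_\beta|$ of $\ti{f}_r$: the triangle inequality applied to the product $\ti{f}_r = \prod_{\mathbf{k}} f\lf(e^{2\pi i\mathbf{k}/r}\mathbf{z}\ri)$ overestimates $|c_\beta|$ by a factor exponential in $r^n$ --- already for $f = (1-z)^2$, where $\ti{f}_r = (1-z^r)^2$ has middle coefficient $-2$ while the triangle-inequality bound gives $\binom{2r}{r}$ --- so one must genuinely extract the cancellation among the roots-of-unity phases in the expansion of the product. Quantitatively, this cancellation is the statement that the Ronkin data of $f$ governs the (concave) exponential growth of the coefficients of $\ti{f}_r$; establishing it \emph{with an explicit rate} --- which is precisely what pins down $N$ --- amounts to quantifying how fast averages over $n$-tuples of $r$-th roots of unity approximate the torus integral defining $N_f$, and constitutes the technical core of the proof.
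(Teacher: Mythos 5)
The paper does not prove this theorem at all; it is quoted verbatim as Purbhoo's Theorem~1 from~\cite{Purb1} (which also explains the obvious typos $n\to\infty$ and ``any $d\ge N$'', both of which should read~$r$). So there is no paper-internal proof to compare against, and what follows assesses your sketch on its own terms.

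Your scaffolding is the right one --- the free inclusion $\cA(f)=\cA(\ti f_r)\subseteq\cL\cA(\ti f_r)$ is correct, as is the observation that what remains is to show $\ti f_r\{\mathbf{w}\}$ becomes lopsided for every $\mathbf{w}$ at distance $\ge\eps$ from $\cA(f)$, and the derivation $\psi_{\mathbf{w}}(\gamma)\le N_f(\mathbf{w})-\eps|\gamma-\alpha|$ from convexity of $N_f$ and its affine-linearity with gradient $\alpha$ on $B(\mathbf{w},\eps)\subseteq E_\alpha(f)$ is clean. You honestly flag the central coefficient bound as the technical core and leave it unproved. However, beyond that acknowledged hole, the argument as assembled \emph{does not close even if the key estimate were granted}. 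You allow the error in $\log(|c_\beta|e^{\langle\beta,\mathbf{w}\rangle})$ to be $o(r^n)$. The exponents $\beta$ you treat as competitors are generic lattice points of $r^n\New(f)$, so the nearest one to $r^n\alpha$ is at lattice distance~$1$, where your margin $\eps|\beta/r^n-\alpha|\cdot r^n$ is just the constant~$\eps$; this constant margin is swamped by the $o(r^n)$ error (indeed already by the ``polynomially many'' factor you invoke, even with zero error). Two omissions, both essential, repair this. First, $\ti f_r(\mathbf{z})=\prod_{\mathbf{k}\in(\fZ/r)^n}f(\zeta^{\mathbf{k}}\cdot\mathbf{z})$ is invariant under each $z_j\mapsto e^{2\pi i/r}z_j$ (this merely permutes the factors), hence $\ti f_r$ is a polynomial in $z_1^r,\ldots,z_n^r$ and its support lies in $r\fZ^n$; thus the nearest competitor sits at lattice distance $r$, so the margin is really $\Theta(\eps r)$. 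Second, this shows that the normalized error must be $o(r^{-(n-1)})$ (equivalently, the unnormalized error must be $o(r)$), not merely $o(1)$; that sharpened rate is exactly where the quantitative equidistribution of root-of-unity averages against the Ronkin torus integral --- exploiting that the integrand is real-analytic over the fibers of the $\eps$-far region --- must enter, and it is again nontrivial. Finally, the two-sided form of your key estimate, $\tfrac1{r^n}\log(|c_\beta|e^{\langle\beta,\mathbf{w}\rangle})=\psi_\mathbf{w}(\beta/r^n)+o(1)$ \emph{uniformly in $\beta$}, cannot be correct: $c_\beta$ can vanish, as in your own example $\ti f_r=(1-z^r)^2$. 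What one actually needs is a one-sided upper bound $\tfrac1{r^n}\log|c_\beta|\le -N_f^*(\beta/r^n)+o(r^{1-n})$ for all $\beta$, plus a separate lower bound at the single exponent $\beta=r^n\alpha$ obtained from a fiber-wise lower bound on $|f|$ over $B(\mathbf{w},\eps)$ quantified by $\eps$ and $\New(f)$. As written, then, the sketch has both an unproved core and a quantitative mismatch that would make the final inequality fail; fixing it requires the $r\fZ^n$ observation and a genuinely sharper error rate.
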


As before let $A = \{\alp(1),\ldots,\alp(d)\} \subset \fZ^n$, 
$\fC^A_\Diamond$ be the space of amoebas introduced in 
Section~\ref{SubSecAmoebas}, and $U_{\alp}^A$ be the set of
polynomials $f \in \fC^A_\Diamond$ which have a complement component 
of order $\alp$.

Furthermore, for $f \in \fC^A_\Diamond$ let $\mathbb{T}(f)$ denote the real $d$--torus of polynomials in $\fC^A_\Diamond$ whose coefficients have the same absolute values as the coefficients of $f$, i.e., for $f = \sum_{\alp(j) \in A} b_j \mathbf{z}^{\alp(j)}$ we have $\mathbb{T}(f) := \{\sum_{\alp(j) \in A} e^{i \cdot \psi_j} \cdot b_j \mathbf{z}^{\alp(j)} \ : \ \psi_j \in [0,2\pi) \text{ for all } j\}$.

It is an easy consequence of the definition of lopsidedness that the following
proposition holds (which is, to the best of our knowledge, surprisingly nowhere mentioned in the literature).
\begin{prop}
Let $f = \sum_{\alp(j) \in A} b_j \mathbf{z}^{\alp(j)}$. Assume that $E_{\alpha(1)}(f)$ is non-empty and that there exists some
 $\mathbf{w} \in E_{\alp(1)}(f)$ such that $f\{\mathbf{w}\}$ is lopsided. 
Then $g\{\mathbf{w}\}$ is lopsided for every $g \in \mathbb{T}(f)$. In particular $\mathbb{T}(f) \subset U_{\alp(1)}^A$.
\label{PropLopsidedness}
\end{prop}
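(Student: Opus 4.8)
The plan is to unwind the definition of lopsidedness and observe that it is a condition purely on the sequence of absolute values $f\{\mathbf{w}\}$, not on the arguments of the coefficients. First I would note that for any $g = \sum_{\alp(j)\in A} e^{i\psi_j} b_j \mathbf{z}^{\alp(j)} \in \mathbb{T}(f)$ and any $\mathbf{w}\in\fR^n$, the sequence $g\{\mathbf{w}\}$ equals $f\{\mathbf{w}\}$, because $|e^{i\psi_j} b_j \mathbf{z}^{\alp(j)}| = |b_j \mathbf{z}^{\alp(j)}|$ for every $j$ and every $\mathbf{z}$ with $\Log|\mathbf{z}| = \mathbf{w}$. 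Hence the hypothesis that $f\{\mathbf{w}\}$ is lopsided immediately gives that $g\{\mathbf{w}\}$ is lopsided for the same $\mathbf{w}$, proving the first assertion.

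Next I would deduce the containment $\mathbb{T}(f)\subset U_{\alp(1)}^A$. Since $g\{\mathbf{w}\}$ is lopsided, we have $\mathbf{w}\notin\cL\cA(g)\supseteq\cA(g)$, so $\mathbf{w}$ lies in the complement of $\cA(g)$. It remains to identify the order of the complement component containing $\mathbf{w}$ as $\alp(1)$. Here I would invoke the description, due to Purbhoo, of which complement component a lopsided point belongs to: when $f\{\mathbf{w}\}$ is lopsided, the dominating monomial index determines the order of the complement component of $\mathbf{w}$ via the order map $\ord$ of~\eqref{eq:ordermap}. Concretely, if the $k$-th entry of the sequence is the one exceeding the sum of the rest, then $\mathbf{w}$ has order $\alp(k)$; this follows from a standard winding-number / argument-principle computation since along the torus fiber the term $m_k$ dominates and controls the winding of $f$. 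Because by hypothesis $\mathbf{w}\in E_{\alp(1)}(f)$, the dominating index must be the one corresponding to $\alp(1)$; and since $g\{\mathbf{w}\} = f\{\mathbf{w}\}$, the same index dominates for $g$, so $\mathbf{w}\in E_{\alp(1)}(g)$, whence $E_{\alp(1)}(g)\neq\emptyset$ and $g\in U_{\alp(1)}^A$.

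The only genuine subtlety — and the place I would be most careful — is the step identifying the order from the dominating monomial. One way to avoid any appeal to an external lemma is to argue directly: the hypothesis $\mathbf{w}\in E_{\alp(1)}(f)$ together with lopsidedness of $f\{\mathbf{w}\}$ forces the dominating entry to be the $\alp(1)$-entry (if some other entry dominated, then by the standard argument-principle computation $\mathbf{w}$ would have a different order, contradicting $\mathbf{w}\in E_{\alp(1)}(f)$); then for $g$, since $g\{\mathbf{w}\}$ is the identical sequence, the same entry dominates, and the argument-principle computation — which depends only on the geometry of the torus fiber and the fact that one term outsizes the sum of the others — gives $\ord(\mathbf{w}) = \alp(1)$ for $g$ as well. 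This makes the proof essentially a tautology once the invariance $g\{\mathbf{w}\} = f\{\mathbf{w}\}$ is recorded, which is presumably why the authors call the statement ``easy.''

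\begin{proof}
Let $g = \sum_{\alp(j)\in A} e^{i\psi_j} b_j \mathbf{z}^{\alp(j)} \in \mathbb{T}(f)$. For any $\mathbf{z}\in(\fC^*)^n$ we have $|e^{i\psi_j} b_j \mathbf{z}^{\alp(j)}| = |b_j\mathbf{z}^{\alp(j)}|$, so $g\{\mathbf{w}\} = f\{\mathbf{w}\}$ for every $\mathbf{w}\in\fR^n$. In particular $g\{\mathbf{w}\}$ is lopsided, which is the first claim. Since $\cA(g)\subseteq\cL\cA(g)$, the point $\mathbf{w}$ lies in the complement of $\cA(g)$, so it lies in some complement component $E_{\alp}(g)$. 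The lopsidedness of $f\{\mathbf{w}\}$ means one entry, say the one indexed by $\alp(k)$, exceeds the sum of the others; a standard argument-principle computation on the torus fiber $\Log^{-1}|\mathbf{w}|$ then shows $\ord(\mathbf{w}) = \alp(k)$, both for $f$ and for $g$, since this computation depends only on the dominance of that single term. As $\mathbf{w}\in E_{\alp(1)}(f)$ by hypothesis, we must have $\alp(k) = \alp(1)$; hence $\mathbf{w}\in E_{\alp(1)}(g)$, so $E_{\alp(1)}(g)\neq\emptyset$ and $g\in U_{\alp(1)}^A$. As $g\in\mathbb{T}(f)$ was arbitrary, $\mathbb{T}(f)\subset U_{\alp(1)}^A$.
\end{proof}
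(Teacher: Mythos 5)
Your proof is correct and follows the same route as the paper: the key observation $g\{\mathbf{w}\}=f\{\mathbf{w}\}$ for $g\in\mathbb{T}(f)$, together with the fact that lopsidedness at a point forces that point into the complement component whose order matches the dominating monomial's exponent. You merely spell out the order-identification step (via the Rouch\'e/argument-principle computation on the fiber) that the paper's terse ``Then, in particular, $E_{\alp(1)}(g)\neq\emptyset$'' leaves implicit.
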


\begin{proof}
Since $g\{\mathbf{w}\} = f\{\mathbf{w}\}$ for every $g \in \mathbb{T}(f)$,
for every $\mathbf{w} \in E_{\alp(1)}(f)$ with $f\{\mathbf{w}\}$ lopsided 
we have $g\{\mathbf{w}\}$ lopsided as well. Then, in particular,
$E_{\alp(1)}(g) \neq \emptyset$, whence $g \in U_{\alp(1)}^A$.
\end{proof}

Theorem~\ref{th:lopsidedconnection} shows that for
polynomials in $\PD$ the converse 
is also true. In this statement it is convenient to have 0
as the interior lattice point, so that we set
$A := \{\alp(0),\ldots,\alp(n),0\}$. We may always assume that this is the case, by dividing $f$ by $\mathbf{z}^y$.
 
\begin{thm} \label{th:lopsidedconnection}
Let $f_c = c + \sum_{j = 0}^n b_j \mathbf{z}^{\alp(j)} = 
c + \sum_{j = 0}^n m_j(\mathbf{z})$ be a parametric 
family in $\PD[0]$ with complex parameter $c$, and let 
$\mathbf{a} := \mathbf{a}(f_{|c|})$ be the point where the 
inner complement component appears
finally for positive real parameter values and $\arg(c)$ in extreme
opposition.
If there exists some $d \in \fC^*$ such that 
$\mathbb{T}(f_d) \subset U_0^A$ then
$f_{d}\{\mathbf{a}\}$ is lopsided with 
$|d|$ as the maximal term.
\label{ThmEquivPerforLopsided}
\end{thm}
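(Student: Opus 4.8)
The plan is to argue by contraposition: assuming $f_d\{\mathbf{a}\}$ is \emph{not} lopsided with $|d|$ as its maximal term, I will produce a polynomial in $\mathbb{T}(f_d)$ whose amoeba is solid, contradicting $\mathbb{T}(f_d)\subset U_0^A$. First I unwind the conclusion. Since the interior exponent is $0$, the inner monomial of $f_d$ is the constant $d$, so the entry of $f_d\{\mathbf{a}\}$ contributed by the inner monomial equals $|d|$, while the remaining entries are $|m_0(\Log^{-1}|\mathbf{a}|)|,\dots,|m_n(\Log^{-1}|\mathbf{a}|)|$. Hence ``$f_d\{\mathbf{a}\}$ is lopsided with $|d|$ as the maximal term'' means precisely
\[
  |d| \ > \ \sum_{j=0}^n |m_j(\Log^{-1}|\mathbf{a}|)| \, .
\]
Note that $f_d\{\mathbf{a}\}$ depends only on $|d|$, so $\arg(d)$ plays no role.

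Next I recall the input from Section~\ref{SecSharpBounds}. For a parametric family $f_\kappa$ with $\arg(c)$ in extreme opposition, the reasoning leading to~\eqref{GlobalMinimum}, carried out for interior exponent $0$ (and without imposing the auxiliary normalization $\alp(0)=0$, $b_0=1$), shows that an extremal phase realizes the smallest real value on every fiber, so that $E_0(f_\kappa)\neq\emptyset$ if and only if $\kappa$ exceeds the threshold
\[
  \kappa^{*} \ := \ \min_{\mathbf{w}\in\fR^n}\ \sum_{j=0}^n |m_j(\Log^{-1}|\mathbf{w}|)| \, ,
\]
that this strictly convex exponential sum attains its minimum (because $0$ lies in the interior of $\conv\{\alp(0),\dots,\alp(n)\}$), and that the minimizer is exactly the point $\mathbf{a}$ of final appearance, by Lemma~\ref{LemSolveMinimum}. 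In particular $\kappa^{*}=\sum_{j=0}^n |m_j(\Log^{-1}|\mathbf{a}|)|$, and for $\arg(c)$ in extreme opposition the amoeba $\cA(f_\kappa)$ is solid precisely when $\kappa\le\kappa^{*}$.

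Now assume, towards a contradiction, that $|d|\le\kappa^{*}$. By the lemma guaranteeing an angle in extreme opposition (applied to the fixed outer coefficients $b_0,\dots,b_n$), choose $\theta\in[0,2\pi)$ in extreme opposition and set
\[
  g \ := \ |d|\,e^{i\theta} \ + \ \sum_{j=0}^n b_j\,\mathbf{z}^{\alp(j)} \, .
\]
The coefficients of $g$ have the same absolute values as those of $f_d$, so $g\in\mathbb{T}(f_d)$; and $g$ is a member of the family $f_\kappa$ with $\kappa=|d|\le\kappa^{*}$ and $\arg(c)=\theta$ in extreme opposition. Hence $\cA(g)$ is solid, i.e.\ $E_0(g)=\emptyset$, so $g\notin U_0^A$, contradicting $\mathbb{T}(f_d)\subset U_0^A$. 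Therefore $|d|>\kappa^{*}=\sum_{j=0}^n |m_j(\Log^{-1}|\mathbf{a}|)|$, which is the asserted lopsidedness of $f_d\{\mathbf{a}\}$ with maximal term $|d|$.

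The only step carrying real content is the input from Section~\ref{SecSharpBounds}: that for $\arg(c)$ in extreme opposition the solidity of $\cA(f_\kappa)$ is governed monotonically by the single threshold $\kappa^{*}$, and that $\mathbf{a}$ is exactly the argmin of the exponential sum defining $\kappa^{*}$. This is precisely what the computation preceding~\eqref{GlobalMinimum} together with Lemma~\ref{LemSolveMinimum} delivers, so no new analysis is required; the remaining steps are bookkeeping about $\mathbb{T}(f_d)$, the choice of extremal phase, and the meaning of lopsidedness.
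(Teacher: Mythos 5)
There is a genuine gap in your contrapositive argument, and it concerns exactly the point the paper works hard to avoid. You claim, citing the computation preceding~\eqref{GlobalMinimum} and Lemma~\ref{LemSolveMinimum}, that for $\arg(c)$ in extreme opposition one has $E_0(f_\kappa)\neq\emptyset$ if and only if $\kappa>\kappa^*$, i.e.\ that the genus of $\cA(f_\kappa)$ is governed monotonically by the single threshold $\kappa^*$. But Section~\ref{SecSharpBounds} establishes only one direction: the extremal-phase/triangle-inequality argument shows that $\mathbf{a}$ lies in $E_0(f_\kappa)$ for every $\kappa>\kappa^*$, and that $E_0(f_{\kappa^*})=\emptyset$. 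It does \emph{not} show that $E_0(f_\kappa)=\emptyset$ for $\kappa<\kappa^*$. That is precisely why Theorem~\ref{ThmSharpUpperBound} is phrased in terms of ``switches the \emph{last} time from genus~0 to~1,'' and why the paper explicitly remarks (just after Theorem~\ref{ThmSharpUpperBound}) that whether the genus switches only once along the ray $\kappa\mapsto f_\kappa$ is open and closely tied to Rullg{\aa}rd's connectivity question. So when you take $g$ with $\kappa=|d|\le\kappa^*$ and extreme opposition and assert ``hence $\cA(g)$ is solid,'' you are invoking a monotonicity the paper has not proved.

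The paper's own proof sidesteps this by first establishing, before any use of the extremal-phase bound, that $\cA(f_c)$ has genus~1 for \emph{all} $c$ with $|c|\ge|d|$. This is done via Rullg{\aa}rd's theorem (cited as \cite[Theorem~14]{Rull1}): the set $(U_0^A)^c$ meets every projective line in a connected, non-empty set; since $f_0\in(U_0^A)^c$ (Corollary~\ref{KorMaximallysparse}) and the circle $C=\{f_c:|c|=|d|\}\subset\mathbb{T}(f_d)$ lies in $U_0^A$ by hypothesis, the connected set $(U_0^A)^c$ restricted to the line must sit inside the disc $\{|c|<|d|\}$. Consequently, along the ray with $\arg(c)$ in extreme opposition, the last switching value $\kappa^*$ (where the amoeba is solid by definition) must satisfy $\kappa^*<|d|$, and only then does Theorem~\ref{ThmSharpUpperBound} give the identification $\kappa^*=\sum_{j=0}^n|m_j(\Log^{-1}|\mathbf{a}|)|$ and hence the lopsidedness of $f_d\{\mathbf{a}\}$. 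Your proof would need this connectedness input (or an independent proof of the monotonicity you assume) to close the gap; the remaining bookkeeping in your write-up (unwinding the lopsidedness condition, placing $g$ in $\mathbb{T}(f_d)$, choosing an angle in extreme opposition) is fine.
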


\begin{proof}
Let $d \in \fC^*$ with $\mathbb{T}(f_d) \subset U_0^A$.
First we show that for every $c \in \fC$ with $|c| \ge |d|$ 
the amoeba $\cA(f_c)$ is of genus~1.

The parametric family $f_c$ forms a complex line in $\PD[0]$ which
can be interpreted as a real plane $H$. By a
result of Rullg{\aa}rd (\cite[Theorem 14]{Rull1}, see also \cite{Mikh1}), 
the intersection of $(U_{\alp}^A)^c$ with an arbitrary projective line 
in $\fC^A_\Diamond$ (viewed as projective space)
is non-empty and connected (even for arbitrary $A$).
For the parameter value $c = 0$ we are in the maximally sparse case, 
and thus Corollary \ref{KorMaximallysparse} implies $f_0 \in (U_0^A)^c$.
By the precondition $\mathbb{T}(f_{d}) \subset U_{0}^A$, the
set $C := \{f_c : c = |d| \cdot e^{i \cdot \phi}, \phi \in [0,2\pi)\} 
\subset \mathbb{T}({f_{d}})$ is contained in $U_0^A$. 
Considered in the plane $H$,
the set $C$ is a circle around the origin. Now the connectedness result
implies that for $|c| \ge |d|$ the amoeba $\cA(f_c)$ is of genus~1 (see Figure~\ref{FigureLopsidedness} for an illustration).

For $\arg(c)$ in extreme opposition,
let $\kappa^* \in \fR$ be the value where $\cA(f_{|c|})$ switches 
the last time from genus 0 to 1. By Theorem~\ref{ThmSharpUpperBound},
the upper bound is attained at some point $\mathbf{z} \in (\fC^*)^n$ 
with $\Log|\mathbf{z}| = \mathbf{a}$ and extremal phase $\phi$. Hence, by evaluating the fiber function of $\mathbf{a}$ at $\phi$
we obtain $\kappa^* =  \sum_{j = 0}^n |b_j| \cdot e^{\lf\lan \mathbf{a}, \alp{(j)} \ri\ran}$.  The auxiliary statement
yields that $\kappa^* < |d|$, and thus
$|d|	> \sum_{j = 0}^n |b_j| \cdot e^{\lf\lan \mathbf{a}, \alp{(j)} \ri\ran}
= \sum_{j = 0}^n \lf|m_j\lf(\Log^{-1}|\mathbf{a}|\ri)\ri|$.
\end{proof}

\ifpictures
\begin{figure}
	\begin{center}
	\begin{picture}(150,130)(0,0)
		\put(0,-20){\includegraphics[width=150pt]{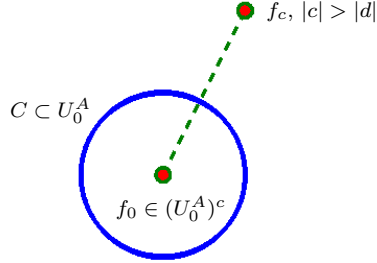}}
		\put(112,100){\tiny{\mbox{$f_c$, $|c| > |d|$}}}
		\put(15,62){\tiny{\mbox{$C \subset U_0^A$}}}
		\put(55,25){\tiny{\mbox{$f_0 \in (U_0^A)^c$}}}
	\end{picture}
	\caption{The real plane $H$ in the proof of 
  Theorem~\ref{th:lopsidedconnection}.}
	\label{FigureLopsidedness}
	\end{center}
\end{figure}
\fi

We recall some of the terminology for $A$-discriminants:
Let $\nabla_0 \subset (\fC^*)^A$ denote the set of all polynomials $f$ such that there exists a $\mathbf{z}^* \in (\fC^*)^n$ with
\begin{eqnarray*}
	f(\mathbf{z}^*)	\ =	\ \frac{\partial f}{\partial z_1}(\mathbf{z}^*)	\ =	\ \cdots	\ =	\ \frac{\partial f}{\partial z_n}(\mathbf{z}^*)	\ =	\ 0
\end{eqnarray*}
and let $\nabla_A$ denote the Zariski closure of $\nabla_0$. 
If the variety $\nabla_A$ is of codimension 1, then the $A$-\emph{discriminant} $\Delta_A$ is defined as the irreducible, integral polynomial in the coefficients $b_1,\ldots,b_d$ of $f \in (\fC^*)^A$ as variables
which vanishes on $\nabla_A$. 
The $A$-discriminant is unique up to sign (see \cite[Chapter 9, p. 271]{GelKapZel}).

The following theorem shows that, for polynomials in $\PD$, there is a strong connection between their $A$-discriminants and the topology of their amoebas. Here, $\ovl{U_y^A}$ denotes the topological closure of the 
set $U_y^A$. Set $A := \{\alpha(0), \ldots, \alpha(n),y\}$.

\begin{thm}  \label{th:adisc1}
Let $\alpha(0) = 0$, $b_0 = 1$. A polynomial 
$f = c \cdot \mathbf{z}^y + 1 + \sum_{i = 1}^n b_i \cdot \mathbf{z}^{\alp{(i)}}$
is contained in $\nabla_A$ if and only if the expression
\begin{eqnarray}
	c	+  \wh{\Theta} \cdot \lf(1 + \sum_{j = 1}^n \frac{\det \lf(\wh M_j\ri)}{\det (\wh M)} \ri) 
\label{cpluswhtheta}
\end{eqnarray}
in the variables $b_1,\ldots,b_n,c$ vanishes. Here, 
$\wh{\Theta}$ is defined as in \eqref{thetahat}.
\end{thm}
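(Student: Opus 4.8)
The plan is to compute the $A$-discriminant of $f$ directly via the critical-point equations and to recognize the resulting hypersurface as the zero set of \eqref{cpluswhtheta}. First I would write down the system $f(\mathbf{z}^*) = z_1 \partial_1 f(\mathbf{z}^*) = \cdots = z_n \partial_n f(\mathbf{z}^*) = 0$ (multiplying each partial by $z_i$ is harmless since $\mathbf{z}^* \in (\fC^*)^n$). Using $\alpha(0)=0$ and $b_0=1$, the equations $z_i \partial_i f = 0$ read
\[
 c\, y_i\, \mathbf{z}^y + \sum_{j=1}^n b_i\, \alpha(j)_i\, \mathbf{z}^{\alpha(j)} \ = \ 0, \qquad 1 \le i \le n,
\]
together with $f(\mathbf{z}^*)=0$. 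Subtracting suitable multiples, or equivalently eliminating the monomial values, one sees that the quantities $u_j := b_j \mathbf{z}^{\alpha(j)}$ and $v := c\,\mathbf{z}^y$ must satisfy a linear system whose matrix is precisely $\wh M = (\alpha(j)_i - y_i)_{i,j}$ after one uses $f=0$ to substitute $v = -1 - \sum_j u_j$. Cramer's rule then gives $u_j / v = -\det(\wh M_j)/\det(\wh M)$, i.e. the monomial ratios are forced to the very same values that appeared in Lemma~\ref{LemSolveMinimum} and in the definition \eqref{thetahat} of $\wh\Theta$.

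Next I would translate these necessary conditions on the monomial values into a single equation on the coefficients. From $u_j = b_j \mathbf{z}^{\alpha(j)}$ we get $\mathbf{z}^{\alpha(j)} = u_j/b_j$, and since $\alpha(1),\ldots,\alpha(n)$ is a lattice basis (up to the determinant $\det M$), the value $\mathbf{z}^y$ is determined as a Laurent monomial in the $\mathbf{z}^{\alpha(j)}$ with exponents $\det(M_j)/\det(M)$ — this is exactly the computation already carried out for $\Theta$ in \eqref{EquTheta} and in the proof of Theorem~\ref{ThmRoughBounds}. Plugging $\mathbf{z}^{\alpha(j)} = -\bigl(\det(\wh M_j)/\det(\wh M)\bigr)\cdot v / b_j$ into $c\,\mathbf{z}^y = v$ and solving for $c$ yields, after collecting the product of the fractional powers, precisely $c = -\wh\Theta\bigl(1 + \sum_{j}\det(\wh M_j)/\det(\wh M)\bigr)$, i.e. the vanishing of \eqref{cpluswhtheta}. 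Conversely, given coefficients on this hypersurface, I would reverse the construction: the prescribed monomial ratios together with the lattice-basis property let me solve for a genuine $\mathbf{z}^* \in (\fC^*)^n$ (here one needs $\det(\wh M) \neq 0$ and the fractional powers to be interpreted consistently, possibly passing to the $D$-fold cover as in the extreme-opposition lemma), and by construction this $\mathbf{z}^*$ is a singular point of $f$, so $f \in \nabla_0 \subset \nabla_A$.

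The main obstacle I anticipate is the bookkeeping with fractional exponents $\det(M_i)/\det(M)$ and the question of branches: the expression $\wh\Theta$ involves roots, so the equation \eqref{cpluswhtheta} really cuts out one irreducible component of a variety defined by a polynomial relation obtained after clearing denominators and taking an appropriate power, and I must make sure this is exactly $\nabla_A$ rather than a larger locus — in particular that the codimension is $1$ so that $\nabla_A$ is a hypersurface and the sign/branch ambiguity matches the "up to sign" normalization of $\Delta_A$. A secondary subtlety is handling the degenerate subcases where $\det(\wh M_j) = 0$ for some $j$ (so some monomial ratio vanishes and a critical point has a coordinate forcing another monomial to zero); these lie in the closure and should be absorbed by passing to the Zariski closure $\nabla_A$, matching the "$\Leftrightarrow$" in the statement. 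I would organize the proof so that the forward direction (singular point $\Rightarrow$ equation) is the clean linear-algebra computation above, and the reverse direction (equation $\Rightarrow$ existence of $\mathbf{z}^*$) is where the covering-space argument from the extreme-opposition lemma is invoked to produce an actual torus point realizing the prescribed monomial data.
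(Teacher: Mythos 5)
Your overall strategy -- eliminate the monomial values $u_j = b_j\mathbf{z}^{\alpha(j)}$, $v = c\mathbf{z}^y$ directly via Cramer's rule and read off the discriminant relation -- is a genuinely different route from the paper's. The paper performs the same initial reduction to a regular linear system in the $\mathbf{z}^{\alpha(j)}$ (establishing codimension $1$ and independence of $\mathbf{z}^*$ from $c$), but then, instead of solving the system explicitly, it \emph{identifies} the critical point with the appearance point $\mathbf{a}(f_\kappa)$ at an extremal phase $\phi'$ (so that $\partial_j f$ vanishes there by the proof of Lemma~\ref{LemSolveMinimum}), and then invokes Theorem~\ref{ThmSharpUpperBound} to see that the fiber function vanishes precisely at the stated value of $|c|$ with $\arg(c)$ in extreme opposition. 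Your approach is more self-contained and elementary in that it re-derives those determinantal values in place rather than importing them; the paper's is shorter because it leverages the amoeba machinery already built in Section~\ref{SecSharpBounds}. Your remarks on the branching of the fractional powers, the passage to the $D$-fold cover, and the Zariski closure absorbing degenerate strata are all appropriate and address real issues.

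There is, however, a concrete slip in the middle step which you should correct. After you substitute $v = -1 - \sum_j u_j$ into the $n$ equations $y_i v + \sum_j \alpha(j)_i u_j = 0$, the resulting system is $\wh M\, u = y$, and Cramer's rule gives the \emph{absolute} values
$u_j = \det(\wh M_j)/\det(\wh M)$,
not the ratios $u_j/v = -\det(\wh M_j)/\det(\wh M)$ as you wrote. The value of $v$ is then forced by $v = -1 - \sum_j u_j = -\bigl(1 + \sum_j \det(\wh M_j)/\det(\wh M)\bigr)$. If one instead follows the ratio formula literally and solves $f=0$ for $v$, one obtains a different $v$ and the final formula for $c$ acquires a spurious factor of $(-v)^{\lambda_0}$ that does \emph{not} cancel in general. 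The correct bookkeeping is: with $u_j = \det(\wh M_j)/\det(\wh M)$ and $\mathbf{z}^{\alpha(j)} = u_j/b_j$, the lattice relation $y = \sum_j (\det M_j/\det M)\,\alpha(j)$ gives $\mathbf{z}^y = \prod_j (u_j/b_j)^{\det M_j/\det M} = 1/\wh\Theta$, and then $c = v/\mathbf{z}^y = -\wh\Theta\bigl(1+\sum_j \det(\wh M_j)/\det(\wh M)\bigr)$, which is exactly the vanishing of \eqref{cpluswhtheta}. (If you prefer to keep the ratios, note that the homogeneous system $M u = -v\,y$ alone gives $u_j/v = -\det(M_j)/\det(M)$, with $M$ and not $\wh M$; that version also closes correctly but needs more care with the fractional powers of $v$.) Since the corrected intermediate step does reach your stated conclusion, the proposal is sound once this is repaired.
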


Note that a power of the summands of~\eqref{cpluswhtheta} is a binomial.

\begin{cor} \label{co:adisc1}
Let $\alpha(0) = 0$ and $b_0 = 1$. 
The $A$-discriminant $\Delta_A$ is a binomial whose variety coincides
with the set of projective points $(1: b_1 : \ldots: b_n:c)$ where $\arg(c)$ is
in extreme opposition and $\cA(f_{|c|})$ switches the last time
from genus~0 to genus~1 exactly at the value~ 
$|c| = |\wh{\Theta}| \cdot (1 + \sum_{j = 1}^n \det \wh M_j / \det \wh M)$.
\end{cor}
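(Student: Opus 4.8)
The plan is to derive Corollary~\ref{co:adisc1} as a direct consequence of Theorem~\ref{th:adisc1} together with Theorem~\ref{ThmSharpUpperBound}. By Theorem~\ref{th:adisc1}, a normalized polynomial $f = c\cdot\mathbf{z}^y + 1 + \sum_{i=1}^n b_i\mathbf{z}^{\alpha(i)}$ lies in $\nabla_A$ precisely when the expression \eqref{cpluswhtheta} vanishes. First I would observe that \eqref{cpluswhtheta} is, up to the monomial factor $\widehat{\Theta}$ in the $b_i$, a $\fC$-linear form in the affine chart, and that in the projective coordinates $(1:b_1:\ldots:b_n:c)$ some power of it becomes a genuine binomial: raising to the power $\det(M)$ clears the fractional exponents $\det(M_i)/\det(M)$ in the definition \eqref{thetahat} of $\widehat{\Theta}$, so $\nabla_A$ is cut out by a binomial in $b_1,\ldots,b_n,c$. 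Since a binomial is irreducible over $\fC$ as soon as its two monomials are coprime (which holds here because one monomial is a pure power of $c$ and the other involves only the $b_i$), and since $\nabla_A$ is then automatically of codimension~1, the $A$-discriminant $\Delta_A$ is by definition this binomial (up to sign), which gives the first assertion of the corollary.

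Next I would translate the vanishing locus of this binomial into the amoeba-theoretic statement. By Theorem~\ref{ThmSharpUpperBound}, when $\arg(c)$ is in extreme opposition the amoeba $\cA(f_\kappa)$ switches the last time from genus~0 to genus~1 exactly at $\kappa = |\widehat{\Theta}|\cdot(1 + \sum_{j=1}^n \det(\widehat M_j)/\det(\widehat M))$. Comparing this with the binomial \eqref{cpluswhtheta}: the equation $c + \widehat{\Theta}\cdot(1 + \sum_j \det(\widehat M_j)/\det(\widehat M)) = 0$ forces, on the one hand, $|c| = |\widehat{\Theta}|\cdot(1 + \sum_j \det(\widehat M_j)/\det(\widehat M))$ (taking absolute values, using that the $\det(\widehat M_j)/\det(\widehat M) > 0$ as established in the proof of Lemma~\ref{LemSolveMinimum}), and on the other hand a precise matching of arguments, namely $\arg(c) = \arg(\widehat\Theta) + \pi$; I would verify that this argument condition is exactly the condition that $\arg(c)$ be in extreme opposition for $f$, using that an extremal phase $\phi$ satisfies $e^{-\langle \mathbf{a},y\rangle}e^{i\langle\phi,y\rangle} = \widehat\Theta$ as computed in the proof of Theorem~\ref{ThmSharpUpperBound} and that $\arg(m_y)$ equals $\arg(c) + \langle\phi,y\rangle$. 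Thus a projective point $(1:b_1:\ldots:b_n:c)$ lies on $\nabla(\Delta_A)$ if and only if $\arg(c)$ is in extreme opposition and $|c|$ equals the sharp switching value, which is precisely the claimed description of the variety.

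The main obstacle I anticipate is the bookkeeping of arguments: showing that the \emph{complex} binomial equation \eqref{cpluswhtheta} is equivalent to the conjunction ``extreme opposition'' plus ``$|c|$ = sharp bound,'' rather than just to the norm equality. One has to be careful that $\widehat\Theta$ is itself complex (it is a product of fractional powers of the complex $b_i$), so the phase of the binomial's second monomial depends on the phases of all the $b_i$, and the equivalence with extreme opposition must be traced through the identity $e^{-\langle\mathbf{a}(f_{|c|}),y\rangle}e^{i\langle\phi',y\rangle} = \widehat\Theta$ from Theorem~\ref{ThmSharpUpperBound}'s proof. A secondary, more routine point is confirming irreducibility of the binomial and codimension~$1$ of $\nabla_A$ in our setting, so that Theorem~\ref{th:adisc1} genuinely identifies $\Delta_A$ (and not merely a multiple of it) with this binomial; this follows from the circuit structure of $A$ but should be stated explicitly.
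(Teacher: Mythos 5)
Your proposal follows the same route as the paper: deduce the corollary from Theorem~\ref{th:adisc1} (the $\nabla_A$ characterization via \eqref{cpluswhtheta}) combined with Theorem~\ref{ThmSharpUpperBound} (the sharp bound at the switching value in extreme opposition), together with the observation that \eqref{cpluswhtheta} is a Laurent binomial.

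One point, though, deserves correction. The irreducibility criterion you invoke --- ``a binomial is irreducible over $\fC$ as soon as its two monomials are coprime'' --- is false; for instance $c^2 - b_1^2 = (c - b_1)(c + b_1)$ has coprime monomials but factors. The correct criterion is that the gcd of all the exponents of the two monomials (equivalently, the primitivity of the difference of the exponent vectors) equals $1$. The clean fix is as follows: if you clear denominators by raising to the \emph{minimal} power (the lcm of the denominators of the rational exponents $\det(M_i)/\det(M)$ written in lowest terms) rather than necessarily to $\det(M)$, then the resulting binomial $c^d - \text{const}\cdot\prod_i b_i^{p_i}$ automatically has $\gcd(d, p_1, \ldots, p_n) = 1$ and hence is irreducible. (Note also that, even without irreducibility, the conclusion that $\Delta_A$ is a binomial would survive, since any irreducible factor of a binomial in disjoint variables is itself a binomial; but the minimal-power normalization gives the precise identification with $\Delta_A$ up to constant that the paper asserts.) The remainder of your argument --- matching moduli and arguments of the two terms in \eqref{cpluswhtheta} to ``extreme opposition'' and the sharp $|c|$-value via $e^{-\langle \mathbf a, y\rangle} e^{i\langle \phi', y\rangle} = \wh\Theta$ --- is exactly what the paper's appeal to Theorem~\ref{ThmSharpUpperBound} is doing.
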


Note that a power of the summands of~\eqref{cpluswhtheta} is an
irreducible binomial with rational coefficients. Up to normalizing
the coefficients, this is the $A$-discriminant.

\medskip

\noindent
\emph{Proof of Theorem~\ref{th:adisc1}.}
For the given polynomial $f \in \PD$. we have
\begin{eqnarray}
	\frac{\partial f}{\partial z_j}	& =	& y_j \cdot c \cdot \mathbf{z}^{y - e_j} + \sum_{i = 1}^n b_i \cdot \alp(i)_j \cdot \mathbf{z}^{\alp{(i)} - e_j} \label{EquProofKorConfigSpace2} \, , \quad 1 \le j \le n,
\end{eqnarray}
where $e_j$ denotes the $j$-th unit vector. Assume that arbitrary $b_1,\ldots,b_n \in \fC^*$ are fixed. Substituting $f$ into $\mathbf{z}^{e_j}$ times (\ref{EquProofKorConfigSpace2}) yields a regular system of linear equations in $(\mathbf{z}^{\alp(1)},\ldots, $ $\mathbf{z}^{\alp(n)})$. The regularity comes from the fact that the $\alp(1),\ldots,\alp(n)$ are the vertices of a simplex.  Hence there are
only finitely many solutions $\mathbf{z}^* \in (\fC^*)^n$ such that all partial derivatives vanish, and all of these solutions have the same norm. For any 
such solution $\mathbf{z}^*$, solving $f = 0$ for $c$ yields a unique and non-zero $c$ such that the $f$ corresponding to the coefficients $b_1,\ldots,b_n,c$ is in $\nabla_A$. This argumentation shows furthermore that $\nabla_A$ is a subvariety of codimension 1 and hence $\Delta_A$ exists. Observe that $\mathbf{z}^*$ does not depend on $c$.

Let now $\phi' \in [0,2\pi)^n$ be an extremal phase. Then $(\mathbf{a}(f_\kappa), \phi') = \mathbf{z}^*$ since we know $\frac{\partial f}{\partial z_j}(\mathbf{a}(f_\kappa), \phi') = 0$ for all $j \in \{1,\ldots,n\}$ from the last section (see the proof of Lemma \ref{LemSolveMinimum}). But since further $\Fa[\mathbf{a}(f_\kappa),f](\phi') = 0$ if and only if $c$ is in extreme opposition and its norm equals the bound from Theorem \ref{ThmSharpUpperBound}, we have $f \in \nabla_A$ if and only if~\eqref{cpluswhtheta} vanishes.
\hfill $\Box$

\medskip

\noindent
\emph{Proof of Corollary~\ref{co:adisc1}.} Expression
\eqref{cpluswhtheta} is a Laurent binomial in the variables 
$b_1,\ldots,b_n,c$ with rational coefficients and monomials in distinct 
variables. Now the statement follows from Theorem~\ref{th:adisc1} 
via Theorem~\ref{ThmSharpUpperBound}.
\hfill $\Box$

\medskip

We remark that a different connection between $A$-discriminants and amoebas was
investigated by Passare, Sadykov and Tsikh \cite{PaSaTsi1} who studied the amoebas of
$A$-discriminantal hypersurfaces. For further connections between $A$-discriminants and polynomials in the class $\PD$, see also \cite{Bihan,Bihan:Rojas:Stella}.

\section{The barycentric case}
\label{SecBarycenter}
In this section we treat polynomials in $\PD$ where the exponent of the inner monomial is the barycenter of the simplex spanned by the exponents of the outer monomials. We call such a pair $(\Delta, y)$ \emph{barycentric}.
For this class we provide a complete classification of the space of amoebas, i.e. the set $U_y^A$ and its complement $(U_y^A)^c$. In particular, we are able to answer Rullg{\aa}rd's question for this barycenter case by showing that set $U_y^A$ is path-connected (Corollary \ref{KorJackpot}).

In \cite[Proposition~2]{PR1} Passare and Rullg{\aa}rd showed that the amoeba
of $f(\mathbf{z}) := 1 + c \cdot z_1 \cdots z_n + \sum_{i = 1}^n z_i^{n+1} \in \fC[\mathbf{z}]$
has a complement component of order $(1,\ldots,1)$ if and only if $0 \not\in \cA(f)$. Moreover, this component exists if and only if
$c \not\in \{-t_1 - \cdots - t_n : t_i \in \fC, |t_i| = 1, t_1 \cdots t_n = 1\}$. We generalize this result as well as our Corollary \ref{KorKroNuCoord} to the following theorem. From now on let $n \ge 2$,
$A = \{\alp(0),\ldots,\alp(n),y\}$ and $\conv A = \Delta$.

\begin{thm}
Let $(\Delta,y)$ be barycentric, and let
$f_c$ be a family of parametric polynomials in $\PD$ with parameter 
$c \in \fC$ (i.e., $|c|$ and $\arg(c)$).
Then for every parameter value $c \in \fC$ 
the following statements are equivalent:
\begin{enumerate}
	\item [(a)] $f_c \in U_y^{A}$ (i.e. $\cA(f_c)$ has genus 1),
	\item [(b)] $\eq(y) \in E_y(f_c)$,
	\item [(c)] $c \not\in \lf\{-|\Theta| \cdot \sum\limits_{j = 0}^n e^{i \cdot (\arg(b_j) + \langle \alp(j) - y, \phi \rangle) } : \phi \in [0,2\pi)^n\ri\}.$
\end{enumerate}
\label{ThmMain2}
\end{thm}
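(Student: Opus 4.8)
The plan is to establish the cycle of implications (b) $\Rightarrow$ (a) $\Rightarrow$ (c) $\Rightarrow$ (b). The implication (b) $\Rightarrow$ (a) is immediate: if $\eq(y) \in E_y(f_c)$ then $E_y(f_c) \neq \emptyset$, so $\cA(f_c)$ has genus~1 by the discussion following Lemma~\ref{LemRullgard} (here $n \ge 2$). For (a) $\Rightarrow$ (c), I would argue contrapositively: suppose $c = -|\Theta| \cdot \sum_{j=0}^n e^{i(\arg(b_j) + \langle \alp(j)-y,\phi\rangle)}$ for some $\phi \in [0,2\pi)^n$. The point is that this is exactly the obstruction to $\eq(y)$ lying outside the amoeba. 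By the initial calculation in the proof of Theorem~\ref{ThmRoughBounds}, at any $\mathbf{z}$ with $\Log|\mathbf{z}| = \eq(y)$ we have $|m_i(\mathbf{z})| = 1$ for $i \in \{0,\ldots,n\}$ and $|m_y(\mathbf{z})| = |c|/|\Theta|$; moreover one can choose $\arg(\mathbf{z}) = \phi$ (using that $M$ is non-singular, as in the proof of Lemma~\ref{LemEquilibPointCoord}) so that $m_j(\mathbf{z}) = e^{i(\arg(b_j) + \langle \alp(j), \phi\rangle)}$. Then $f_c(\mathbf{z}) = m_y(\mathbf{z}) + \sum_{j=0}^n m_j(\mathbf{z})$, and the barycentric hypothesis $\sum_{j=0}^n \alp(j) = (n+1)y$ forces $\langle \alp(j) - y, \phi\rangle$ to sum appropriately so that the stated value of $c$ makes $m_y(\mathbf{z}) = -\sum_{j=0}^n m_j(\mathbf{z})$, i.e. $f_c(\mathbf{z}) = 0$. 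Hence $\eq(y) \in \cA(f_c)$, and since $\eq(y) \in \conv\{\eq(0),\ldots,\eq(n)\}$ and the interior of that simplex maps into $E_y$ once it avoids $\cA(f_c)$ (Lemma~\ref{LemInnerEquiPoint}), the failure of $\eq(y) \notin \cA(f_c)$ actually means $\cA(f_c)$ cannot have a complement component of order $y$ — so $f_c \notin U_y^A$.

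The key remaining implication is (c) $\Rightarrow$ (b). Here I would use the barycentric structure crucially. By Corollary~\ref{KorKroNuCoord}, in the barycentric case the point $\mathbf{a}(f_\kappa)$ where the inner complement component finally appears coincides with $\eq(y)$ (applying the corollary with the roles of $\eq(0)$ and $\eq(y)$ symmetric under barycentricity, since $\sum \alp(j) = (n+1)y$ is precisely the condition there). Thus for $\arg(c)$ in extreme opposition, $\eq(y)$ is exactly the last point to be absorbed into the complement, and the genus switches once. Now fix $c$ with (c) holding. Consider the fiber function $\Fa[\eq(y), f_c]$ on the torus $[0,2\pi)^n$: by the computation above its values are $\{\,(|c|/|\Theta|)\,e^{i\arg(c)}e^{i\langle\phi,y\rangle} + \sum_{j=0}^n e^{i(\arg(b_j)+\langle\alp(j),\phi\rangle)} : \phi \in [0,2\pi)^n\,\}$, and condition (c) says precisely that $0$ is not among these values, i.e. $\eq(y) \notin \cA(f_c)$. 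Combined with $\eq(y) \in \conv\{\eq(0),\ldots,\eq(n)\}$ and Lemma~\ref{LemInnerEquiPoint}(b), this gives $\eq(y) \in E_y(f_c)$, which is (b).

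\textbf{Main obstacle.} The delicate point is handling general $\arg(c)$, not just the extreme-opposition case: for (c) $\Rightarrow$ (b) I want to conclude $\eq(y) \notin \cA(f_c)$ directly from the non-vanishing of the fiber function, which is exactly what (c) encodes, so that step is actually clean once the fiber values are identified. The subtler issue is making sure the barycentric hypothesis really does make $\eq(y)$ the unique and final point of appearance \emph{for every} $\arg(c)$ — i.e. that no complement component of order $y$ can exist without $\eq(y)$ itself being in it. This is where I expect the argument to require care: one should verify that, in the barycentric case, $E_y(f_c) \neq \emptyset$ implies $\eq(y) \in E_y(f_c)$, for which I would invoke the symmetry of the simplex $\conv\{\eq(0),\ldots,\eq(n)\}$ around its barycenter $\eq(y)$ (a consequence of Corollary~\ref{KorKroNuCoord} together with Lemma~\ref{LemSolveMinimum}, showing $\mathbf{s}^* = 0$) and a convexity argument on the Ronkin function $N_{f_c}$, whose restriction to any complement component is affine; the barycenter is the natural candidate for the interior vertex of the spine, and one shows $E_y(f_c)$, being convex and nonempty, must contain it. That convexity/symmetry bookkeeping is the heart of the proof; everything else is the bookkeeping of Cramer's rule already done in Theorem~\ref{ThmRoughBounds}.
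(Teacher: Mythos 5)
Your cyclic decomposition $(\text{b}) \Rightarrow (\text{a}) \Rightarrow (\text{c}) \Rightarrow (\text{b})$ is structurally fine, and your treatment of the fiber function at $\eq(y)$ (which is how the paper establishes $(\text{b}) \Leftrightarrow (\text{c})$) is essentially correct. The problem is that the real mathematical content of the theorem is hidden inside your $(\text{a}) \Rightarrow (\text{c})$ step, and there you assert it rather than prove it. Arguing contrapositively you reach ``$\eq(y) \in \cA(f_c)$'' and then immediately conclude ``$\cA(f_c)$ cannot have a complement component of order $y$,'' citing Lemma~\ref{LemInnerEquiPoint}. But Lemma~\ref{LemInnerEquiPoint} takes genus~1 as a \emph{hypothesis}; it says nothing about the case $\eq(y) \in \cA(f_c)$. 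The claim ``$\eq(y) \in \cA(f_c) \Rightarrow E_y(f_c) = \emptyset$'' is exactly the contrapositive of $(\text{a}) \Rightarrow (\text{b})$, i.e.\ the hardest direction of the whole theorem, and it is not a consequence of any of the lemmas you cite. In the non-barycentric case it is simply false (the complement component can appear away from $\eq(y)$), so the barycentric hypothesis must enter nontrivially here --- which your $(\text{a}) \Rightarrow (\text{c})$ paragraph does not do.

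You do flag this as ``the subtler issue'' at the end, but the sketch you offer there does not close the gap. You propose ``symmetry of the simplex $\conv\{\eq(0),\ldots,\eq(n)\}$ around its barycenter $\eq(y)$'' plus a convexity argument for the Ronkin function. Neither ingredient is quite right: $\eq(y)$ is not in general the barycenter of $\conv\{\eq(0),\ldots,\eq(n)\}$, and convexity of $N_{f_c}$ alone does not localize $E_y$. What the paper actually proves (after normalizing $y=0$, so $\sum_j \alp(j) = 0$) is a symmetry of the amoeba itself: writing $\lam_j := \langle \alp(j), \eq(y)+\mathbf{w}\rangle$, every permutation of the $\lam_j$ is realized by some $\mathbf{w}'$, the corresponding fiber functions are conjugate by a torus automorphism, hence $\eq(y)+\mathbf{w} \in E_y(f_c)$ forces $\eq(y)+\mathbf{w}' \in E_y(f_c)$. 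This produces points of $E_y(f_c)$ in every cell $A_0,\ldots,A_n$ of $\fR^n \setminus \cC(f_c - c)$, and then convexity of $E_y(f_c)$ (the complement component, not the Ronkin function) together with uniqueness of $\eq(y)$ as the vertex of $\cC(f_c - c)$ forces $\eq(y) \in E_y(f_c)$. This argument needs the barycentric identity in an essential computational way (equation~\eqref{centercalculation}); your sketch does not engage with it. Also, a smaller point: in $(\text{c}) \Rightarrow (\text{b})$, once you know $\eq(y) \notin \cA(f_c)$ you still need to know its complement component has order $y$; citing Lemma~\ref{LemInnerEquiPoint}(b) is again circular, since it presupposes genus~1. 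The paper instead uses Theorem~\ref{ThmRoughBounds}(a) to deduce $|c| > |\Theta|$ (so the inner monomial dominates at $\eq(y)$) and then invokes Lemma~\ref{LemRullgard}.
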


Note that (c) generalizes the condition from the example above, since if all coefficients of $f_c$
are 1 then $\Theta = 1$.

\begin{proof}
Since the inner lattice point $y$ is the barycenter, we have $f_c = c \cdot \mathbf{z}^y + \sum_{j = 0}^n b_j \cdot \mathbf{z}^{\alp(j)}$ and $\sum_{j = 0}^n \alp(j) = (n+1) \cdot y$. As usual, we may assume $b_0 = 1$ and $\alp(0) = 0$.

(b) $\Lera$ (c): Since $\alp(0),\ldots,\alp(n)$ form a simplex, the equilibrium point $\eq(y)$ is unique. At $\eq(y)$ we have for the outer monomials $|b_i| \cdot e^{\lan \alp(i),\eq(y) \ran} = 1$ (Definition \ref{DefEquilibriumPoint}) and furthermore $e^{\lan y,\eq(y) \ran} = 1/|\Theta|$ (proof of Theorem \ref{ThmRoughBounds}). Hence, at $\eq(y)$ the fiber function is given by
\begin{eqnarray*}
	\Fa[\eq(y),f_c](\phi)	& =	& c \cdot e^{i \cdot \langle y, \phi \rangle} + |\Theta| \cdot \sum_{j = 0}^n e^{i \cdot (\arg(b_j) + \langle \alp(j), \phi \rangle)}.	
\end{eqnarray*}
Thus, if and only if the condition (c) is satisfied, the zero set
$\cV(\Fa[\eq(y),f_c])$ of the fiber function $\Fa[\eq(y),f_c]$
is empty and therefore $\eq(y) \in \fR^n \bs \cA(f_c)$. Since by Theorem \ref{ThmRoughBounds} (a) $\eq(y)$ may be contained in the complement of $\cA(f_c)$ only if $c$ is the dominant term, we have with Lemma \ref{LemRullgard} that $\eq(y) \in \fR^n \bs \cA(f_c)$ if and only if $\eq(y) \in E_y(f_c)$.

(b) $\Ra$ (a) is trivial. (a) $\Ra$ (b): Since we are only interested in $\cV(f_c)$ we may normalize such that $y = 0$ and hence $\sum_{i = 1}^n \alp(i) = -\alp(0)$. We show that $\cA(f)$ is symmetric around $\eq(y)$: Assume that $\eq(y) + \mathbf{w} \in E_y(f_c)$ for an arbitrary $\mathbf{w} \in \fR^n$. Setting $\lam_j = \langle \alp(j), \eq(y) + \mathbf{w}\rangle$ 
for $j \in \{1,\ldots,n\}$ we obtain
\begin{eqnarray}
\label{centercalculation}
	\langle \alp(0), \eq(y) + \mathbf{w}\rangle \ = \ -\sum_{j = 1}^n \langle \alp(j), \eq(y) + \mathbf{w} \rangle
		\ = \ -\sum_{j = 1}^n \lam_j.
\end{eqnarray}
Then for any permutation of the $\lam_j$ there is a $\mathbf{w}'$ with $\langle \alp(j), \eq(y) + \mathbf{w}'\rangle = \lam_j$ for $j \in \{0,\ldots,n\} \bs \{k,l\}$ and $\langle \alp(k), \eq(y) + \mathbf{w}'\rangle = \lam_l$, $\langle \alp(l), \eq(y) + \mathbf{w}'\rangle = \lam_k$. This is obvious for $k,l \in \{1,\ldots,n\}$. Thus, let $k = 0, l = 1$ and $\langle \alp(0), \eq(y) + \mathbf{w}'\rangle = \lam_1$. Then by~\eqref{centercalculation} we have
$$\langle \alp(1),\eq(y) + \mathbf{w}' \rangle \ = \ - \langle \alp(0),\eq(y) + \mathbf{w}' \rangle - \sum_{j = 2}^n \langle \alp(j),\eq(y) + \mathbf{w}' \rangle \ = \ - \sum_{j = 1}^n \lam_j,$$
i.e., every permutation of the lengths of the monomials at $\eq(y) + \mathbf{w}$ is realized at some point $\eq(y) + \mathbf{w}'$. Similarly, let $\phi \in [0,2\pi)^n$ with $\exp(i \cdot \langle \alp(j), \phi \rangle) = \psi_j \in [0,2\pi)$. Then, with the same argumentation, there is a $\phi'$ realizing every given permutation of the $\psi_j$. Altogether, such a permutation is realized by some $\fC^*$-basis transformation
on $(\fC^*)^n$.
Thus, if $\mathbf{w}'$ realizes some permutation of the $\lam_j$, then there 
exists an automorphism $\pi$ on $[0,2\pi)^n$ such that for all $\psi \in [0,2\pi)^n$: $\Fa[\eq(y) + \mathbf{w},f_c](\psi) = \Fa[\eq(y) + \mathbf{w}',f_c](\pi(\psi))$. Hence, we have for all such $\mathbf{w}'$:
\begin{eqnarray}
	\eq(y) + \mathbf{w} \in E_y(f_c)	& \Ra	& \eq(y) + \mathbf{w}' \in E_y(f_c). \label{EquSymmetry}
\end{eqnarray}

Now investigate the complement-induced tropical hypersurface 
$\cC(f_c - c)$ (see Section \ref{SubSecSpine}) with $\eq(y)$ as unique vertex. 
Let $A_0,\ldots,A_n$ denote the cells given by the decomposition $\fR^n \bs \cC(f_c - c)$. Since $E_y(f_c)$ is an open set and $\cC(f_c - c)$ has codimension one in $\fR^n$, we can assume that $\mathbf{w}$ is contained in the interior of some $A_i$.
The fact that every permutation of the $\lam_i$ is realized at some point $\eq(y) + \mathbf{w}'$ together with \eqref{EquSymmetry} yields: If $\eq(y) + \mathbf{w} \in A_i$ then there is some $\eq(y) + \mathbf{w}' \in E_y(f_c)$ for every $A_j \neq A_i$. Since $\eq(y)$ is the unique vertex of $\cC(f_c - c)$ and due to convexity of $E_y(f_c)$ this implies for every $\mathbf{w} \neq 0$
\begin{eqnarray*}
	\eq(y) + \mathbf{w} \in E_y(f_c)	& \Ra	& \eq(y) \in E_y(f_c).	
\end{eqnarray*}
\vspace*{-6ex}

\end{proof}

Theorem \ref{ThmMain2} yields that understanding $U_y^A$ and its complement can be reduced to understanding the fiber function $\Fa[\eq(y),f_c]$ and its variety. With this approach we will be able to provide a geometric description of $U_y^A$ and $(U_y^A)^c$.

For $R > r$, a hypocycloid with parameters $R,r$ is the parametric curve in $\fR^2 \cong \fC$ given by
\begin{eqnarray}
	[0,2\pi) \ra \fC, \quad \phi \mapsto (R - r) \cdot e^{i \cdot \phi} + r \cdot e^{i \cdot \lf(\frac{r - R}{r}\ri) \cdot \phi}.
	\label{EquHypocycloidDef}
\end{eqnarray}
Geometrically, it is the trajectory of some fixed point on a circle with radius $r$ rolling (from the interior)
on a circle with radius $R$. The main part of this section is devoted
towards proving the following nice and explicit characterization
of~$\partial (U_y^{A})^c$.

\begin{thm}
Let $(\Delta,y)$ be barycentric.
For given $b_0,\ldots,b_n \in \fC^*$ the intersection of the set $\partial (U_y^{A})^c$ with the 
complex line $\{(b_0, \ldots, b_n, c) : c \in \fC\}$ is given by the (eventually rotated) hypocycloid with 
parameters $R = (n+1) \cdot |\Theta|$, $r = |\Theta|$ and with cusps at 
\begin{eqnarray}
	\arg(c)	& =	& \pi \cdot \lf(1 + \frac{2k -\sum_{i = 1}^n \arg(b_i)}{n + 1}\ri), \quad k \in \{0,\ldots,n\}.
	\label{EquHypocycloid}
\end{eqnarray}
\label{ThmHypocycloid}
\end{thm}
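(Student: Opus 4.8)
\textbf{Proof proposal for Theorem~\ref{ThmHypocycloid}.}

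The plan is to reduce everything to an analysis of the fiber function $\Fa[\eq(y),f_c]$ at the equilibrium point, using Theorem~\ref{ThmMain2}. By that theorem, $f_c \in (U_y^A)^c$ exactly when $\eq(y) \notin E_y(f_c)$, and condition~(c) tells us precisely that this happens iff
\[
c \ \in \ \lf\{ -|\Theta| \cdot \sum_{j=0}^n e^{i \cdot (\arg(b_j) + \lan \alp(j) - y, \phi\ran)} \ : \ \phi \in [0,2\pi)^n \ri\}.
\]
So the intersection of $(U_y^A)^c$ with the complex line $\{(b_0,\ldots,b_n,c) : c \in \fC\}$ is the image in the $c$-plane of the map $\phi \mapsto -|\Theta| \sum_{j=0}^n e^{i(\arg(b_j) + \lan \alp(j)-y,\phi\ran)}$, and $\partial(U_y^A)^c$ (inside that line) is the boundary of this planar region. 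First I would simplify: normalize $b_0 = 1$, $\alp(0) = 0$; since $(\Delta,y)$ is barycentric we have $\sum_{j=0}^n \alp(j) = (n+1)y$, hence $\sum_{j=0}^n (\alp(j) - y) = 0$. Writing $\theta_j := \arg(b_j) + \lan \alp(j) - y, \phi\ran$, the exponents satisfy the single linear relation $\sum_j \theta_j = \sum_j \arg(b_j)$ (a constant, call it $\Sigma$), and otherwise $\phi \mapsto (\theta_0,\ldots,\theta_n) \bmod 2\pi$ is a surjection of $[0,2\pi)^n$ onto the hyperplane $\{\sum \theta_j = \Sigma\}$ in the torus $[0,2\pi)^{n+1}$ (surjectivity because $\alp(1)-y,\ldots,\alp(n)-y$ span $\fQ^n$; this is the same covering argument used in the proof of the extreme-opposition lemma). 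So the region is exactly $-|\Theta|$ times the set $\mathcal S := \{ \sum_{j=0}^n e^{i\theta_j} : \theta_j \in \fR, \ \sum_j \theta_j = \Sigma\}$.

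The heart of the matter is then the purely geometric fact: \emph{the boundary of $\mathcal S = \{\sum_{j=0}^n e^{i\theta_j} : \sum \theta_j = \Sigma\}$ is the hypocycloid with parameters $R = n+1$, $r = 1$}, suitably rotated. I would prove this by a Lagrange-multiplier / critical-point analysis of the map $\Phi(\theta_0,\ldots,\theta_n) = \sum_j e^{i\theta_j}$ restricted to the hyperplane $\sum \theta_j = \Sigma$. A point on $\partial \mathcal S$ must be a critical value, i.e. there is $\lambda \in \fR$ with $i e^{i\theta_j} = \lambda \cdot i$ for all $j$ — wait, more carefully: the constraint gradient is $(1,\ldots,1)$ and $\partial_{\theta_j}\Phi = i e^{i\theta_j}$, so criticality of $|\Phi|$ forces, after splitting into real and imaginary parts, that all the $e^{i\theta_j}$ with $j$ ranging over any fixed value of $\theta_j$ contribute equally; concretely the $\theta_j$ take at most two distinct values, say $\theta_j \in \{a, b\}$, with $p$ of them equal to $a$ and $n+1-p$ equal to $b$. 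Imposing the constraint $pa + (n+1-p)b = \Sigma$ and tracing out $a$ (with $b$ determined) gives, for each $p \in \{1,\ldots,n\}$, an epicycloid/hypocycloid arc; the outer boundary corresponds to $p = n$ (equivalently $p=1$), i.e. $n$ of the exponents coincide and one is free — and $\sum = (n\cdot\text{(common)} + \text{(free)})$ with the common one free as parameter $\phi$ and the free one $= \Sigma - n\phi$, giving $n e^{i\phi} + e^{i(\Sigma - n\phi)}$, which after the substitution matching~\eqref{EquHypocycloidDef} (with $R - r = n$, $r = 1$, so $R = n+1$, and $(r-R)/r = -n$) is exactly the stated hypocycloid. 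Scaling by $|\Theta|$ turns $R = n+1$, $r = 1$ into $R = (n+1)|\Theta|$, $r = |\Theta|$, and the overall sign $-1$ together with the phase $\Sigma = \sum_i \arg(b_i)$ produces the rotation; setting the cusps where the free exponent meets the repeated one, i.e. $\phi \equiv \Sigma - n\phi \pmod{2\pi}$, gives $\phi = (\Sigma + 2\pi k)/(n+1)$ and hence $\arg(c) = \pi + \lan y,\phi\ran \cdot(\ldots)$; I would check this bookkeeping reproduces~\eqref{EquHypocycloid} exactly (the $\pi(1+\cdot)$ coming from the global minus sign contributing $\pi$, and $2k - \sum \arg(b_i)$ over $n+1$ being $-\phi$ up to sign conventions).

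The main obstacle I expect is the geometric boundary claim — specifically, showing that among all the critical-value arcs (one for each $p \in \{1,\ldots,n\}$) it is genuinely the $p = 1$ (equivalently $p = n$) arc that forms the \emph{outer} boundary of $\mathcal S$, and that $\mathcal S$ is exactly the filled region bounded by that hypocycloid (rather than some of the arcs being internal). I would handle this by a degree/winding argument: the restriction of $\Phi$ to the constraint hyperplane is a smooth map from an $n$-torus to $\fC$, its regular values have constant (mod-2) preimage count off the critical set, and near a generic point of the $p=1$ hypocycloid exactly one preimage degenerates, so crossing it changes the count by one — establishing it as the topological boundary. One also needs the elementary sub-claim that the hypocycloid with $R = n+1$, $r = 1$ is a simple closed curve (no self-intersections) bounding a convex-ish star-shaped region with $n+1$ cusps; this is classical for hypocycloids with integer ratio $R/r = n+1$. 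Everything else — the reduction via Theorem~\ref{ThmMain2}, the covering-surjectivity, and the trigonometric identification with~\eqref{EquHypocycloidDef} — is routine.
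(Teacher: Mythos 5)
Your overall strategy is the same as the paper's: reduce via Theorem~\ref{ThmMain2}(c) to analyzing the image of the fiber function at $\eq(y)$, normalize so that the phase $\theta_j = \arg(b_j) + \langle\alp(j)-y,\phi\rangle$ satisfies the single constraint $\sum_j\theta_j = \Sigma$ (constant), and then use a critical-point analysis to show that boundary points of the image force the $\theta_j$ to take at most two distinct values, giving the family of nested hypocycloidal arcs indexed by $p\in\{1,\ldots,n\}$, with the $p=1$ (equivalently $p=n$) arc as the hypocycloid with $R=(n+1)|\Theta|$, $r=|\Theta|$. All of this matches the paper's argument (the reduction is in the proof of Theorem~\ref{ThmHypocycloid} itself, and the critical-point computation is the content of the appendix Lemma~\ref{LemSsubsetT}); your cusp bookkeeping is also consistent with the paper's formula.

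The gap is in your proposed argument for the ``filled region'' claim (that $\mathcal S$ is exactly the region bounded by the outermost hypocycloid). You suggest a degree/winding argument: ``the restriction of $\Phi$ to the constraint hyperplane is a smooth map from an $n$-torus to $\fC$, its regular values have constant (mod-2) preimage count off the critical set, and near a generic point of the $p=1$ hypocycloid exactly one preimage degenerates.'' This reasoning requires the source and target to have the same dimension, but the constrained domain is an $n$-dimensional torus mapping to $\fR^2$. For $n\geq 3$ the preimage of a regular value is positive-dimensional (generically an $(n-2)$-manifold), so ``preimage count'' is not finite and the mod-2 degree argument does not apply as stated; it would work only in the case $n=2$. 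To repair this you would need to restrict $\Phi$ to a suitable two-dimensional slice of the constraint torus and analyze that restriction. The paper does precisely this, but constructively rather than via degree theory: in Lemma~\ref{LemConstrF} it exhibits an explicit two-parameter family $F(\mu,\psi)$ (a particular slice of the fiber parametrization, with $\mu$ tracking a signed length and $\psi$ an angle) whose image lies in $S$ and whose boundary slice $\mu=n$ is the hypocycloid, and then in Lemma~\ref{le:hypocycloid2} it shows by a quasi-periodicity and homotopy argument that the image of $F$ equals the full filled region $T$. Combined with the critical-point containment $S\subseteq T$ from the appendix, this gives $S=T$. Your plan converges to the same result once the dimensional issue is addressed, but as written the surjectivity step is not a complete argument for general~$n$.
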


We have already seen that it suffices to treat the case $y = 0$. Let $f_c \in \PD[0]$ be a parametric family with $\sum_{i = 0}^n \alp(i) = 0$ and fixed $b_0,\ldots,b_n \in \fC^*$, $b_0 = 1$. For $f_c$ consider the set
\begin{eqnarray}
	S	& :=	& \lf\{c \in \fC : \cV(\Fa[\eq(y),f_c]) \neq \emptyset\ri\} \label{EquSetS}
\end{eqnarray}
as a subset of $\fR^2 \cong \fC$. Theorem \ref{ThmMain2} shows that $S$ is exactly the set of all $c \in \fC$ such that the inner complement component of $\cA(f)$ exists. Hence, $S \subseteq \fR^2$ is located in the space $\PD[0]$ intersected with the complex line $\{(b_0,\ldots,b_n,c) : c \in \fC\}$ induced by the family $f_c$. It contains all coefficient vectors of polynomials not belonging to $U_y^{A}$. As a first step towards the proof of Theorem~\ref{ThmHypocycloid}
we show a technical result on the set $S$.

\begin{lem}
Let $k := -n + 1 + (-1)^{n+1}$ and
\begin{eqnarray}
	F: [k,n] \times [0,2\pi) \ra \fC, \quad (\mu,\psi) \mapsto |\Theta| \cdot \mu \cdot e^{i \cdot \psi} + |\Theta| \cdot e^{i \cdot (- n \cdot \psi + \sum_{j = 1}^n \arg(b_j))} \label{EquSimpleFiberDescription}.
\end{eqnarray}
Then
\begin{enumerate}
	\item The image of $F$ is contained in the set $S$ defined in \eqref{EquSetS}.
	\item Up to a rotation, the curve parameterized by $\phi \mapsto F(n,\phi)$ for $\phi \in [0,2\pi)$ is a hypocyloid \eqref{EquHypocycloidDef} with $R = (n+1) \cdot |\Theta|$, $r = |\Theta|$.
\end{enumerate}
\label{LemConstrF}
\end{lem}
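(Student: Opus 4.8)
The plan is to derive both statements from the closed form of the fiber function at the equilibrium point, which is already available from the proof of Theorem~\ref{ThmMain2}. In the barycentric case, after normalizing so that $\alp(0)=0$ and $b_0=1$ and writing $\beta:=\sum_{j=1}^n\arg b_j$, the outer monomials all have absolute value $1$ at $\eq(y)$ and $|m_y|=|c|/|\Theta|$ there, so for $c=F(\mu,\psi)$ one gets
\[
  \Fa[\eq(y),f_c](\phi)\ =\ \frac{c}{|\Theta|}\,e^{i\langle y,\phi\rangle}\ +\ \sum_{j=0}^n e^{\,i(\arg b_j+\langle\alp(j),\phi\rangle)}.
\]
Setting $\theta_j:=\arg b_j+\langle\alp(j)-y,\phi\rangle$ and using barycentricity $\sum_{j}\alp(j)=(n+1)y$ gives $\theta_0+\cdots+\theta_n\equiv\beta\pmod{2\pi}$, and since the matrix $\wh M$ is non-singular the tuple $(\theta_1,\dots,\theta_n)$ ranges freely over $[0,2\pi)^n$ while $\theta_0$ is then determined. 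Consequently $c\in S$ iff $-c/|\Theta|$ can be written as a sum $e^{i\theta_0}+\cdots+e^{i\theta_n}$ of $n+1$ unit vectors whose product is $e^{i\beta}$. This ``sum of $n+1$ unit vectors on a fixed product torus'' picture is the normal form I would work in.

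For part~(1) I would fix $(\mu,\psi)\in[k,n]\times[0,2\pi)$ and construct the required unit vectors by hand. One vector is reserved to reproduce the summand $|\Theta|e^{i(-n\psi+\beta)}$ of $F(\mu,\psi)$ (its orientation being chosen so that the remaining product constraint stays solvable); the other $n$ ``petal'' vectors are placed symmetrically around the direction $e^{i\psi}$, in conjugate pairs $e^{i(\psi\pm\delta_k)}$---each such pair contributing $2\cos\delta_k$ to the $e^{i\psi}$-component of the sum and a factor $1$ to the product---plus, when $n$ is odd, one extra petal that the product constraint forces onto $\pm e^{i\psi}$ and that accounts for the shift $(-1)^{n+1}$. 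The $e^{i\psi}$-component of the petal sum then ranges over an interval whose endpoints are $n$ (all $\delta_k=0$) and $-n+1+(-1)^{n+1}=k$ (all $\delta_k=\pi$); by the intermediate value theorem applied to $(\delta_1,\dots)\mapsto 2\sum_k\cos\delta_k$ every $\mu\in[k,n]$ is hit, and the $\theta_j$ produced in this way come from an honest $\phi\in[0,2\pi)^n$ by the surjectivity recorded above. This yields $F(\mu,\psi)\in S$.

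For part~(2) the work is a short calculation. Plugging $R=(n+1)|\Theta|$, $r=|\Theta|$ into the hypocycloid parametrization~\eqref{EquHypocycloidDef} gives $\phi\mapsto n|\Theta|\,e^{i\phi}+|\Theta|\,e^{-in\phi}$, while $F(n,\phi)=n|\Theta|\,e^{i\phi}+|\Theta|\,e^{i(-n\phi+\beta)}$. The substitution $\phi=\phi'+\beta/(n+1)$ together with factoring out $e^{i\beta/(n+1)}$ turns $F(n,\cdot)$ into $e^{i\beta/(n+1)}\bigl(n|\Theta|\,e^{i\phi'}+|\Theta|\,e^{-in\phi'}\bigr)$, i.e. the standard hypocycloid rotated by $\beta/(n+1)$; differentiation shows the velocity vanishes exactly when $e^{(n+1)i\phi'}=1$, which is what will produce the cusp angles in Theorem~\ref{ThmHypocycloid}.

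The one genuinely delicate point I expect is the parity bookkeeping in part~(1): I must verify, separately for $n$ even and $n$ odd, that the reserved vector and the petals can be arranged so that the total product is precisely $e^{i\beta}$ for every admissible $(\mu,\psi)$, and that this is possible exactly for $\mu\in[k,n]$ and no larger. The reduction to the normal form, the intermediate value step, and the hypocycloid identification are all routine.
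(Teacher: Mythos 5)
Your ``normal form'' reduction---write $-c/|\Theta|$ as a sum of $n+1$ unit vectors $e^{i\theta_0},\dots,e^{i\theta_n}$ subject to $\sum_j\theta_j\equiv\beta:=\sum_{j=1}^n\arg b_j\pmod{2\pi}$---is the paper's dual-basis reduction in other notation, and your conjugate-pair ``petal'' picture is the one-parameter family $\phi_\xi$ of the paper's proof. Part~(2) of your plan is routine and fine. The gap is in part~(1), and it is not a bookkeeping detail.

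Reserving $\theta_0$ so that $e^{i\theta_0}$ reproduces the second summand of $-F(\mu,\psi)/|\Theta|=-\mu e^{i\psi}-e^{i(-n\psi+\beta)}$ pins $\theta_0=-n\psi+\beta+\pi$. But a conjugate pair $(\psi+\delta_k,\psi-\delta_k)$ contributes $2\psi$---not $0$---to $\sum_j\theta_j$; your claim that each pair contributes ``a factor $1$ to the product'' only tracks the $\delta_k$-dependence and ignores the $\psi$-dependence. With this $\theta_0$ and $n/2$ pairs you always get $\sum_{j=0}^n\theta_j=\beta+\pi$, violating the constraint for $n$ even, and there is no unpaired petal left to absorb the surplus $\pi$. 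This is not merely a defect of the construction: Lemma~\ref{LemConstrF}(1) as printed fails for $n$ even. Take $n=2$ in the barycentric setting with $y=0$ and all $b_j=1$ (so $\Theta=1$, $\beta=0$): $F(2,0)=3$, yet $\sum_{j=0}^2 e^{i\theta_j}=-3$ forces $\theta_0=\theta_1=\theta_2=\pi$, giving $\sum\theta_j=3\pi\not\equiv0\pmod{2\pi}$, so $3\notin S$. What the paper's $\phi_\xi$ actually produces (after correcting its typo $\langle\alp(0),\phi_\xi\rangle=\beta-\psi$ to $\beta-n\psi$) is $\theta_0=-n\psi+\beta$ with no $+\pi$; then $\sum\theta_j=\beta$ holds automatically, the pairs supply $+\mu e^{i\psi}$ with $\mu=n\cos\xi$, and $g(\phi_\xi)=-|\Theta|(\mu e^{i\psi}+e^{i(-n\psi+\beta)})=-F(\mu,\psi)$. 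Hence what is actually provable---and what agrees with the cusp locations in Theorem~\ref{ThmHypocycloid}---is $\im(-F)\subseteq S$; for $n$ odd this coincides with $\im(F)\subseteq S$ because the $(n+1)$-cusped hypocycloid is invariant under rotation by $\pi$, but for $n$ even it does not. To repair your argument, drop the $+\pi$ from $\theta_0$, let the pairs supply $+\mu e^{i\psi}$ with $\mu=2\sum_k\cos\delta_k$ (for $n$ odd placing the unpaired petal at $\psi$, which yields the range $[-n+2,n]$), and target $-F(\mu,\psi)$ rather than $F(\mu,\psi)$.
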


\begin{proof}
By \eqref{EquSetS} and \eqref{EquFiber} the set $S$ is given by the image of the function $g: [0,2\pi)^n \ra \fC, \phi \mapsto -|\Theta| \cdot \sum_{j = 0}^n e^{i \cdot (\arg(b_j) + \langle \alp(j), \phi \rangle)}$ (Theorem \ref{ThmMain2}). The idea of the proof is that the image of $g$ restricted to some 
particular subset of $[0,2\pi)^n$ is exactly the image of $F$.

Let again $\alp(1)^*,\ldots,\alp(n)^* \in \fQ^n$ denote the dual basis of $\alp(1),\ldots,\alp(n)$, and set  
$$h(\phi) \ := \ g(\phi) - |\Theta| \cdot e^{i \cdot \langle \alp(0),\phi \rangle} \ = \ -|\Theta| \cdot \sum_{j = 1}^n e^{i \cdot (\arg(b_j) + \langle \alp(j), \phi \rangle)}.$$
Further let $\psi \in [0,2\pi)$ and $\sig_\psi$ denote the segment $[-k \cdot |\Theta| \cdot e^{i \cdot \psi},n \cdot |\Theta| \cdot e^{i \cdot \psi}] \subset \fC$.

We first discuss the case of $n$ even. For fixed $\psi$, let $M := \{\phi_\xi : \xi \in [0,\pi]\}$ with
\begin{eqnarray*}
	\phi_\xi	& :=	& \sum_{j = 1}^{n/2} (\psi - \arg(b_j) + \xi) \cdot \alp(j)^* + \sum_{j = n/2 + 1}^n (\psi - \arg(b_j) - \xi) \cdot \alp(j)^*.
\end{eqnarray*}
Since $\arg(b_j) + \langle \alp(j), \phi_\xi \rangle = \psi + \xi$ for $j \leq n/2$ (resp. $\psi - \xi$ for $j > n/2$) and since all summands have norm $|\Theta|$, we see $e^{i \cdot (\arg(b_j) + \langle \alp(j),\phi_\xi \rangle)} + e^{i \cdot (\arg(b_j) - \langle \alp(j + n/2),\phi_\xi \rangle)} \in \sig_\psi$. Thus, $h(\phi_\xi) \in \sig_\psi$ for all $\phi_\xi \in M$.

Since furthermore the real part of $h(\phi_\xi) \cdot e^{-i \cdot \psi}$ is given by $n \cdot \cos(\xi)$, the image of $h(M)$ is $\sig_\psi$, i.e. $\{|\Theta| \cdot \mu \cdot e^{i \cdot \psi} : \mu \in [k,n]\}$. Finally 
we have for every $\phi_\xi \in M$
\begin{eqnarray*}
	\langle \alp(0),\phi_\xi \rangle
	  & =	& \big\langle - \sum_{j = 1}^n \alp(j),\phi_\xi \big\rangle 
	  \ =	\ \sum_{j = 1}^{n/2} \arg(b_j) - \psi + \xi + \sum_{j = n/2 + 1}^{n} \arg(b_j) - \psi - \xi \\
	  & =	& \sum_{j = 1}^{n} \arg(b_j) - \psi.
\end{eqnarray*}
Hence the set $g(M) = \{h(\phi_\xi) + |\Theta| \cdot e^{i \cdot \langle \alp(0),\phi_\xi \rangle} : \phi_\xi \in M\}$ coincides with the set
$\{|\Theta| \cdot (\mu \cdot e^{i \cdot \psi} + e^{i \cdot (\sum_{j = 1}^{n} (\arg(b_j) - \psi))}) : \mu \in [k,n])\}$, i.e., $g(M) = F([k,n],\psi)$.

If $n$ is odd, the argument is analogous up to the fact that we redefine $M := \{\phi_\xi : \xi \in [0,\pi]\}$ by 
\begin{eqnarray*}
	\phi_\xi	& :=	&(\psi - \arg(b_1)) \cdot \alp(1)^* + \sum_{j = 2}^{\lceil n/2 \rceil} (\psi - \arg(b_j) + \xi) \cdot \alp(j)^* + \\
			&	& \sum_{j = \lceil n/2 \rceil + 1}^{n} (\psi - \arg(b_j) - \xi) \cdot \alp(j)^*.
\end{eqnarray*}
This proves the first statement.

For the choice of $R$ and $r$ we obtain the hypocycloid curve $\{|\Theta| \cdot n \cdot e^{i \cdot \phi} + |\Theta| \cdot e^{-i \cdot n |\Theta| \phi} \, : \, \phi \in [0,2\pi)\}$, which coincides with the image of $F(n,\psi), \psi \in [0,2\pi)$ up to a coordinate change  given by $\psi \mapsto \lf(\frac{\sum_{i = 1}^n \arg(b_i)}{n + 1}\ri) + \phi$. This is the second statement.
\end{proof}

Indeed, the next lemma states that the set $S$ defined in \eqref{EquSetS} \emph{exactly} coincides 
with the region defined by the hypocycloid curve. See the Appendix for a detailed
calculation.

\begin{lem} \label{le:hypocycloid2}
The set $S$ equals the region $T$ whose boundary is (up to rotation) the hypocycloid with parameter $R = (n+1) \cdot |\Theta|$, $r = |\Theta|$ given by $\phi \mapsto F(n,\phi)$ for $\phi \in [0,2\pi)$. In particular, $S$ is simply connected.
\label{LemSimplyConnected}
\end{lem}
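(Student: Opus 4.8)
The plan is to show the two inclusions $F([k,n]\times[0,2\pi))\subseteq S$ (already done in Lemma~\ref{LemConstrF}(1), with $S$ being the image of $g$) and, conversely, that every point of $S$ lies in the region $T$ bounded by the hypocycloid $\phi\mapsto F(n,\phi)$. Since $S$ is by Theorem~\ref{ThmMain2} exactly the image of the map $g:[0,2\pi)^n\to\fC$, $\phi\mapsto-|\Theta|\cdot\sum_{j=0}^n e^{i\cdot(\arg(b_j)+\langle\alp(j),\phi\rangle)}$, the first thing I would do is factor out $|\Theta|$ and reparametrize by $\theta_j:=\arg(b_j)+\langle\alp(j),\phi\rangle$. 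Because $\alp(1),\dots,\alp(n)$ form a lattice basis of a simplex and $\alp(0)=-\sum_{j\ge 1}\alp(j)$, as $\phi$ ranges over $[0,2\pi)^n$ the tuple $(\theta_1,\dots,\theta_n)$ covers $[0,2\pi)^n$ (a $D$-fold cover, $D=|\det M|$) and then $\theta_0$ is forced to be $-\sum_{j\ge1}\theta_j$. Hence, up to the scaling factor $|\Theta|$, the set $S$ is precisely the image of the map
\[
	(\theta_1,\dots,\theta_n)\ \mapsto\ -\sum_{j=1}^n e^{i\theta_j}\ -\ e^{-i(\theta_1+\cdots+\theta_n)},
\]
i.e. the set of sums $-\sum_{j=0}^n u_j$ where $u_j$ range over the unit circle subject to $u_0u_1\cdots u_n=1$. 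This is exactly the classical description (generalizing Passare--Rullg{\aa}rd's) of the region swept out, and the claim reduces to the purely geometric fact that this set is the filled hypocycloid with $R=(n+1)$, $r=1$.

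Next I would prove that geometric fact. The key step is an extremal/variational argument: fix the modulus $\rho=|{-\sum u_j}|$ and, among all $(u_0,\dots,u_n)$ on the unit circle with product $1$, determine the possible arguments of $\sum u_j$. Using a Lagrange multiplier on the constraint $\prod u_j=1$ (equivalently $\sum\theta_j\equiv 0$), at a critical configuration the vectors $u_j$ must split into at most two groups of coincident values (this is the same ``at most two distinct values'' phenomenon already exploited in Lemma~\ref{LemConstrF} and in the proof of Theorem~\ref{ThmMain2}), say $p$ copies of $e^{i\alpha}$ and $q=n+1-p$ copies of $e^{i\beta}$ with $p\alpha+q\beta\equiv 0\pmod{2\pi}$. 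Parametrizing $\alpha=q\phi$, $\beta=-p\phi$ one gets the boundary sums $p\,e^{iq\phi}+q\,e^{-ip\phi}$; the outermost such curve (largest $R$) is $p=n$, $q=1$, giving $n\,e^{i\phi}+e^{-in\phi}$, which after the rescaling to $R-r$, $r$ is exactly \eqref{EquHypocycloidDef} with $R=n+1$, $r=1$. Since the constraint set is compact and connected and $0$ is attained (take the $u_j$ to be the $(n+1)$-st roots of unity when $n+1$ divides appropriately, or a nearby configuration), the image is a compact connected set whose boundary is contained in the union of these critical curves; the outermost one dominates, and a degree/winding argument (or simply checking that $g$ restricted to the $(n-1)$-parameter family $M$ from Lemma~\ref{LemConstrF} already sweeps every radial segment $\sig_\psi$ out to the hypocycloid) shows the image is the full filled region $T$. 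Simple connectedness of $T$ is then immediate from it being a filled hypocycloid (star-shaped with respect to the origin, in fact).

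For the write-up I would lean on Lemma~\ref{LemConstrF}: part (1) already gives $T\subseteq S$ once one knows $F(n,\cdot)$ traces $\partial T$ and the radial segments fill the interior, so the real content is the reverse inclusion $S\subseteq T$, i.e. showing $g$ does not overshoot the hypocycloid. Concretely: for any $\phi$, write $g(\phi)=-|\Theta|\sum_{j=0}^n e^{i\theta_j}$ and bound $|g(\phi)|$ in the direction $e^{i\psi}$ where $\psi=\arg(-g(\phi))$; the maximum of $\sum_{j}\cos(\theta_j-\psi)$ under $\sum_j\theta_j\equiv 0$ is achieved, by the two-value reduction, at the boundary configuration, which gives exactly the hypocycloid radius in that direction. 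The detailed trigonometric optimization is the routine ``calculation in the Appendix'' the statement refers to, so in the body I would just set up the Lagrange condition, invoke the two-value structure, reduce to the one-parameter family $p\,e^{iq\phi}+q\,e^{-ip\phi}$, and observe that $p=n,q=1$ gives the outer envelope.

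The main obstacle is the converse inclusion $S\subseteq T$: proving that no configuration of unit vectors with product $1$ produces a sum lying strictly outside the $R=n+1$, $r=1$ hypocycloid. The two-distinct-values reduction via Lagrange multipliers is the crux; one must also handle that critical points with the two values could give smaller hypocycloids (the $p<n$ cases) lying inside, and argue these never extend past the $p=n$ envelope — this is a comparison of the curves $p\,e^{iq\phi}+q\,e^{-ip\phi}$ for $2\le p\le n$, each of which is contained in the disk of radius $n+1$ and, more to the point, inside $T$, which one can verify directly. I expect this envelope comparison, together with the boundary-behaviour check that $F(n,\cdot)$ is genuinely the outermost curve and is actually attained, to be where the Appendix calculation does its work.
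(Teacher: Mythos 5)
Your proposal follows essentially the same route as the Appendix: you identify $S$ with the image of $g$, show $T\subseteq S$ via Lemma~\ref{LemConstrF}, and for $S\subseteq T$ you use the critical-point reduction (at most two distinct values among the $\theta_j$, hence a one-parameter family of interior hypocycloids $p\,e^{iq\phi}+q\,e^{-ip\phi}$ with the $p=n$ curve as outer envelope), followed by a sweeping/covering argument for the interior. The paper's Lemma~\ref{LemSsubsetT} obtains the two-value condition from $\RE(\nabla g)\parallel\IM(\nabla g)$ and the monotonicity of $\tan$, which is the same computation as your Lagrange-multiplier formulation on the constraint $\sum\theta_j\equiv 0$; the final covering of $T$ via the quasi-periodicity in sectors $T_1,\dots,T_{n+1}$ corresponds to your radial-segment sweeping.
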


With these results we are able to prove Theorem \ref{ThmHypocycloid}:

\medskip

\noindent
\emph{Proof of Theorem \ref{ThmHypocycloid}.}
Again, we may assume that $y$ is the origin. 
For $b_0,\ldots,b_n \in \fC^*$ we investigate the parametric family
$f_c = c + \sum_{j = 0}^n b_i \cdot \mathbf{z}^{\alp(i)} \in \PD[0]$
with a parameter $c \in \fC$. On this complex line in the space of amoebas
we want to describe $\partial (U_y^{A})^c$. 

By Theorem \ref{ThmMain2} (c), $\cA(f_c)$ has genus 1 if and only if $c \not\in \{|\Theta| \cdot \sum_{j = 0}^n e^{i \cdot (\arg(b_j) + \langle \alp(j) - y, \phi \rangle) } : $ $\phi \in [0,2\pi)^n\}$ (recall that $\Theta$ depends on the choice of the $b_i$) which is the complement of $S$ by definition. Therefore
\begin{eqnarray*}
	\partial \lf((U_y^{A})^c\ri) \cap \{(b_0,\ldots,b_n,c) : c \in \fC\}	& =	& \partial S.
\end{eqnarray*}
By Lemma \ref{LemSimplyConnected}, $\partial S$ is up to rotation a hypocycloid with parameters $R = (n+1) \cdot |\Theta|$, $r = |\Theta|$ around the origin. The location of the cusps follows from the definition of the $\partial S$-describing function $F$ in \eqref{EquSimpleFiberDescription} solving $i \cdot \lam = -i \cdot (n\cdot \lam + \sum_{j = 1}^n \arg(b_j)) \mod 2 \pi$.
\hfill $\Box$

\begin{exa}
For the parametric family of polynomials $f_c = 1 + 2.4 \cdot z_1^2z_2 + c \cdot z_1 z_2^3 + (1 + 1.3i) \cdot z_1 z_2^8$, the set $\PD[y] \cap \{(1:2.4:1+1.3 \cdot i:c) : c \in \fC\}$ is illustrated in Figure \ref{FigureHypocycloid}. The non-real choice of one of the ``outer'' coefficients causes a rotation of the set as described in Theorem \ref{ThmHypocycloid}.
\end{exa}

\ifpictures
\begin{figure}[ht]
	\begin{center}
		\includegraphics[width=0.5\linewidth]{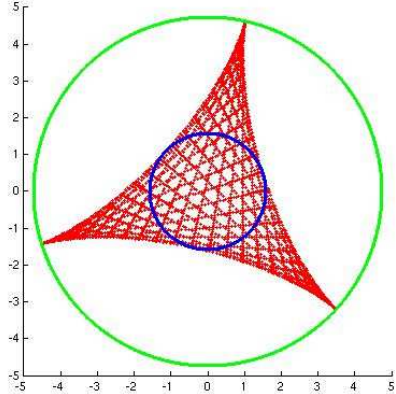}
	\caption[A meshplot of $\cV(\cF_{\eq(y),f})$.]{A meshplot of $S$. The green (light) circle has radius $3 \cdot |\Theta|$ and the blue (dark) circle has radius $|\Theta|$ with $|\Theta| \approx 1.5789$.}
	\label{FigureHypocycloid}
	\end{center}
\end{figure}
\fi

Finally, we show path-connectivity of the set $U_y^A$ and therefore answer Rullg{\aa}rd's question for all spaces of amoebas of polynomials with barycentric simplex Newton polytopes with one inner lattice point (see Corollary \ref{KorJackpot}). As a cornerstone, we show the following general result about spaces of amoebas.

\begin{thm}
Let $A = \conv\{\alp(1),\ldots,\alp(d)\}$ and $j \in \{1, \ldots, d\}$.
If for every $b \in \fC^A_\Diamond$ the set $\{(b_1,\ldots,b_d) : b_j \in \fC^*\} \cap \lf(U_{\alp(j)}^A\ri)^c$ is simply connected, then $U_{\alp(j)}^{A}$ is path-connected.
\label{ThmPathconstruction}
\end{thm}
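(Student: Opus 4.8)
The plan is to prove that $U:=U_{\alp(j)}^{A}$ is connected; since $U$ is open in $\fC^A_\Diamond$, and $\fC^A_\Diamond$ is an open subset of $\fC^A\cong\fR^{2d}$, connectedness of $U$ will already give path-connectedness. The case in which $\alp(j)$ is a vertex of $\conv A$ is trivial, because then $E_{\alp(j)}(f)\neq\emptyset$ for every $f\in\fC^A_\Diamond$, so $U=\fC^A_\Diamond$, which is a product of copies of $\fC$ and $\fC^*$ and hence path-connected. So assume $\alp(j)$ is not a vertex. I would consider the coordinate projection $\rho$ that forgets the coefficient $b_j$, with image $B:=\rho(\fC^A_\Diamond)$; then $B$ is again a product of copies of $\fC$ and $\fC^*$, hence path-connected, $\rho$ is continuous, open, and surjective, and for $b'\in B$ the fibre $L_{b'}:=\rho^{-1}(b')$ is a complex line with coordinate $b_j$ (here $U^c$ denotes the complement of $U$ in $\fC^A_\Diamond$).

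First I would record two facts about the fibres. \emph{(i)} For each $b'\in B$ the fibre meets $U$, and $L_{b'}\cap U^c$ is compact: fixing any $\mathbf{w}_0\in\fR^n$, as $|b_j|\to\infty$ the term $|m_j(\Log^{-1}|\mathbf{w}_0|)|$ tends to $\infty$ while the remaining $|m_i(\Log^{-1}|\mathbf{w}_0|)|$ stay fixed, so for $|b_j|$ large $f\{\mathbf{w}_0\}$ is lopsided with its $j$-th term exceeding the sum of the others, whence $\mathbf{w}_0\in E_{\alp(j)}(f)$ by the lopsidedness criterion (cf.\ Theorem~\ref{ThmPurb1}); the same estimate confines $L_{b'}\cap U^c$ to a disc $\{|b_j|\le R(b')\}$, so it is bounded, and it is closed since $U$ is open. \emph{(ii)} Since $U_{\alp(j)}^A$, and hence $U^c$, is semialgebraic, $Z_{b'}:=L_{b'}\cap U^c$ is a compact semialgebraic subset of $L_{b'}\cong\fC$, and by the hypothesis $Z_{b'}\cap\{b_j\neq0\}$ is simply connected.

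The heart of the argument is to deduce that $L_{b'}\cap U=L_{b'}\setminus Z_{b'}$ is connected. By (ii) the set $Z_{b'}\cap\{b_j\neq0\}$ is simply connected, and since (by semialgebraicity) the point $b_j=0$, if it lies in $Z_{b'}$, is attached to $Z_{b'}$ conically, $Z_{b'}$ itself is simply connected; being compact and semialgebraic it is triangulable, so $\check H^1(Z_{b'})=0$. By Alexander duality in the one-point compactification $S^2$ of $L_{b'}$, a compact subset with vanishing $\check H^1$ does not disconnect the plane, so $L_{b'}\cap U$ is connected; being open and, by (i), nonempty, it is path-connected. Now suppose for contradiction $U=U_1\sqcup U_2$ with $U_1,U_2$ open and nonempty. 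Each fibre $L_{b'}\cap U$ is connected and nonempty, hence contained entirely in $U_1$ or entirely in $U_2$, so $B=\rho(U_1)\sqcup\rho(U_2)$; since $\rho$ is open and $U_1,U_2$ are open and nonempty, $\rho(U_1)$ and $\rho(U_2)$ are disjoint, open, and nonempty, contradicting the connectedness of $B$. Hence $U$ is connected, and therefore path-connected.

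The step I expect to be the main obstacle is the planar-topology one -- converting ``the slice of $U^c$ is simply connected'' into ``the slice of $U$ is connected''. It genuinely needs the compactness of $L_{b'}\cap U^c$, which is exactly what the lopsidedness bound in (i) provides, together with the semialgebraicity of $U$, so that ``simply connected'' really forces $\check H^1=0$ and excludes a slice encircling a hole. A minor but necessary point is the bookkeeping around $b_j=0$ inside a fibre: when $f$ is maximally sparse it belongs to $U^c$ (Corollary~\ref{KorMaximallysparse}), but in general it may lie on either side of $\partial U$, and one must verify that passing between $Z_{b'}\cap\{b_j\neq0\}$ and $Z_{b'}$ itself preserves simple connectedness. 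The remaining ingredients -- openness and surjectivity of $\rho$ and path-connectedness of $B$ -- are routine.
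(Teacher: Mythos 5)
The student takes a genuinely different route from the paper. The paper's proof is constructive: given $a, b \in U_{\alp(j)}^A$, it picks a modulus $\kappa$ at which the $j$-th term dominates (lopsidedness) for all relevant coefficient profiles, moves $a$ and $b$ inside their respective $b_j$-fibers to points $a', b'$ at modulus $\kappa$, and then joins $a'$ to $b'$ by first rotating phases within the torus $\mathbb{T}(f_{a'}) \subset U_{\alp(j)}^A$ (Proposition~\ref{PropLopsidedness}) and then linearly interpolating the outer moduli while staying lopsided. The student instead runs a fibration-style argument: the projection $\rho$ forgetting $b_j$ is open with connected image, and each fiber meets $U$ in a connected, nonempty set (lopsidedness gives compactness of $L_{b'} \cap U^c$, semialgebraicity gives triangulability, Alexander duality in $S^2$ converts $\check H^1 = 0$ into connectedness of the complement), so $U$ is connected and, being open in $\fR^{2d}$, path-connected. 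It is worth noting that the step ``simply connected slice of $U^c$ implies connected slice of $U$'' is implicitly needed in the paper's proof as well, to produce the fiber paths $\gamma_1$ and $\gamma_2$; the student isolates this step and lets it carry the entire argument, which is shorter and exposes the topological content, while the paper's version is more elementary and explicit.

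One correction is needed, and it falls exactly at the point the student flags as ``minor.'' The claim that ``the point $b_j = 0$, if it lies in $Z_{b'}$, is attached to $Z_{b'}$ conically, so $Z_{b'}$ itself is simply connected'' is false as stated: adding a single limit point to a compact semialgebraic set can create nontrivial $H^1$ (an arc becomes a circle), and a circle through the origin would disconnect $\fC \setminus Z_{b'}$ and kill the argument. In fairness, the discrepancy originates in the paper's own statement: the theorem writes $b_j \in \fC^*$, yet the paper's proof uses the slice over all of $\fC$, and in the application (Corollary~\ref{KorJackpot}) the set $S$ contains $0$ in its interior, so $S \setminus \{0\}$ is \emph{not} simply connected. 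The hypothesis should therefore be read with $b_j$ ranging over $\fC$ (or, when $\alp(j)$ is a vertex, over $\fC^*$, but that case you dispose of separately). With that reading the puncture disappears, the ad-hoc ``conical'' patch is unnecessary, and your argument goes through without gaps.
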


\begin{proof}
We identify $b \in \fC^A_\Diamond$ with $f_b := \sum_{i = 1}^d b_i \cdot \mathbf{z}^{\alp(i)} \in \fC^A_\Diamond$. Since no assumptions are made about the $\alp(j)$ here we may choose $j = 1$ to abbreviate notation. Let $a, b \in U_{\alp(1)}^{A} \subseteq \fC^A_\Diamond$. We construct an explicit path $\gamma$ between $a$ and $b$ such that $\gamma \in U_{\alp(1)}^A$. Let $[a,b]$ denote the line segment $a + \mu \cdot (b - a) \subset \fC^A_\Diamond, \mu \in [0,1]$. For the construction of the path we need a value $\kappa \in \fR_{> 0}$ for the norm of the first coordinate of points in $\fC^A_\Diamond$ such that every point on $[a,b]$ is lopsided. This is guaranteed by
\begin{eqnarray}
	\kappa	& :=	& 1 + \max_{c \, \in \, [a,b]} \min_{\mathbf{w} \, \in \, \fR^n} \Big\{\sum_{i = 2}^d |c_i| \cdot e^{\langle \mathbf{w},\alp(i) - \alp(1)\rangle}\Big\} \in \fR_{>0}. \label{EquThmPathConstr1}
\end{eqnarray}
Define the points $a', b' \in \fC^A_\Diamond$ by
$$
	a' \ := \ (\kappa \cdot \arg(a_1), a_2 , \ldots , a_d), \quad b' \ := \ 
                  (\kappa \cdot \arg(b_1), b_2 , \ldots , b_d).
$$
The choice of $\kappa$ guarantees that the polynomials $f_{a'}$ and $f_{b'}$ are lopsided at some point with the monomial with exponent $\alp(1)$ as dominant term and therefore $a', b' \in U_{\alp(1)}^{A}$. Since for every $b \in \fC^A_\Diamond$ the set $\{(b_1,\ldots,b_d) : b_1 \in \fC\} \cap (U_{\alp(1)}^A)^c$ is simply connected and since $a, a', b, b' \in U_{\alp(1)}^A$, there exists a path $\gamma_1$ from $a$ to $a'$ and a path $\gamma_2$ from $b'$ to $b$ with $\gamma_1 \subset \{(a_1,a_2,\ldots,a_d) : a_1 \in \fC\} \cap U_{\alp(1)}^A$ and $\gamma_2 \subset \{(b_1,b_2,\ldots,b_d) : b_1 \in \fC\} \cap U_{\alp(1)}^A$. Let
\begin{eqnarray*}
	d \ := \ (\kappa \cdot \arg(b_1), \arg(b_2) \cdot |a_2| , \ldots , \arg(b_d) \cdot |a_d|).
\end{eqnarray*}

Since there is a $\mathbf{w} \in \fR^n$ with $\mathbf{w} \in E_{\alp(1)}(f_{a'})$ and $f_{a'}\{\mathbf{w}\}$ lopsided we have
\begin{eqnarray*}
	\mathbb{T}(f_{a'}) \ = \ \Big\{f' =  \kappa \cdot e^{i \cdot \psi_1} \cdot \mathbf{z}^{\alp(1)} + \sum_{j = 2}^d e^{i \cdot \psi_j} \cdot a_j \cdot \mathbf{z}^{\alp(j)} \ : \ \psi_j \in [0,2\pi) \text{ for all } j\Big\} & \subset & U_{\alp(1)}^A
\end{eqnarray*}
by Proposition \ref{PropLopsidedness}. Since furthermore $d \in \mathbb{T}(f_{a'})$, there exists a path $\gamma_3 \subset \mathbb{T}(f_{a'}) \subset U_{\alp(1)}^A$ from $a'$ to $d$.

Let $\gamma_4$ denote the line segment
\begin{eqnarray*}
	\gamma_4	& :=	& \big\{ d + \lam \cdot (0, \arg(b_2) \cdot (|b_2| - |a_2|) , \ldots , \arg(b_d) \cdot (|b_d| - |a_d|)), \quad \lam \in [0,1] \big\} \, .
\end{eqnarray*}
By construction $\gamma_4(\lam) \in \mathbb{T}(f_{a + \lam(b-a)})$ for all $\lam \in [0,1]$. Since for every $\lam \in [0,1]$ the first coordinate of $\gamma_4(\lam)$ has norm $\kappa$, it follows from \eqref{EquThmPathConstr1} and Proposition \ref{PropLopsidedness} that there is a $\mathbf{w} \in \fR^n$ such that $f_{\gamma_4(\lam)}\{\mathbf{w}\}$ is lopsided and in $E_{\alp(1)}(f_{\gamma_4(\lam)})$. Hence, $\gamma_4 \subset U_{\alp(1)}^A$. Therefore, $\gamma := \gamma_2 \circ \gamma_4 \circ \gamma_3 \circ \gamma_1$ is a path from $a$ to $b$ with $\gamma \in U_{\alp(1)}^A$.
\end{proof}

\begin{cor}
If $(\Delta,y)$ is barycentric then $U_y^{A}$ is path-connected.
\label{KorJackpot}
\end{cor}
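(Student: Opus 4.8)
The plan is to obtain Corollary~\ref{KorJackpot} as a direct consequence of the general connectivity criterion in Theorem~\ref{ThmPathconstruction}, applied with the index $j$ corresponding to the interior lattice point $y$ of $\Delta$. Thus, after the usual normalization $\alp(0)=0$, $b_0=1$ (and, since only $\cV(f)$ matters, $y=0$, which turns the barycentric condition $\sum_j\alp(j)=(n+1)y$ into $\sum_j\alp(j)=0$), I would fix arbitrary outer coefficients $b_0,\ldots,b_n\in\fC^*$ and examine the complex line $L=\{(b_0,\ldots,b_n,c):c\in\fC\}\subset\fC^A_\Diamond$, which is exactly the parametric family $f_c$ studied in Section~\ref{SecBarycenter}.

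The first and essentially only real step is to identify the slice $L\cap(U_y^A)^c$ with the set $S$ from~\eqref{EquSetS}. By Theorem~\ref{ThmMain2}, the amoeba $\cA(f_c)$ has genus~$0$ (equivalently, $f_c\notin U_y^A$) precisely when
\[
  c \ \in\ \Big\{-|\Theta|\cdot\sum_{j=0}^{n}e^{i\cdot(\arg(b_j)+\langle\alp(j)-y,\phi\rangle)} \ :\ \phi\in[0,2\pi)^n\Big\},
\]
and the right-hand side is, by definition, the image of the map $g$ whose range is $S$ (see the discussion preceding Lemma~\ref{LemConstrF}). Hence, under the identification $L\cong\fR^2\cong\fC$ by the parameter $c$, we have $L\cap(U_y^A)^c=S$. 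Now Lemma~\ref{le:hypocycloid2} states that $S$ is the closed planar region bounded by a (possibly rotated) hypocycloid with $R=(n+1)|\Theta|$ and $r=|\Theta|$; in particular $S$ is simply connected. Since $b_0,\ldots,b_n\in\fC^*$ were arbitrary, the hypothesis of Theorem~\ref{ThmPathconstruction} is verified for the index attached to $y$, and therefore $U_y^A$ is path-connected, which is the claim.

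The reason this comes out cleanly is that every substantive ingredient has already been established: Theorem~\ref{ThmPathconstruction} performs the topological reduction to a statement about coordinate slices, Theorem~\ref{ThmMain2} translates the genus~$1$ condition into membership in the explicit hypocycloidal set, and the geometric core --- that the excluded parameter region is the \emph{full} filled hypocycloid, hence contains no holes --- is Lemma~\ref{le:hypocycloid2} (proved in the Appendix). I therefore expect the proof of the corollary itself to be a short chain of these citations; the only point that needs a little care is to state the normalization ($\alp(0)=0$, $b_0=1$, $y=0$) so that $S$ and the condition in Theorem~\ref{ThmMain2}(c) line up exactly, and to note that passing to this normalization does not change $\cV(f)$ and hence affects neither the genus nor membership in $U_y^A$.
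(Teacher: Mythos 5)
Your proposal is correct and matches the paper's own proof: the paper likewise observes that each slice $S$ from~\eqref{EquSetS} is simply connected (Lemma~\ref{LemSimplyConnected}) and then invokes Theorem~\ref{ThmPathconstruction}. The only additional remark in the paper is that $S$ contains the origin (via Theorem~\ref{ThmRoughBounds}), which is a minor point you leave implicit, and otherwise the chain of citations and the normalization step you describe are exactly what the paper does.
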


\begin{proof}
All $S$ (see (\ref{EquSetS})) are simply connected (Lemma \ref{LemSimplyConnected}) and contain the origin (Theorem \ref{ThmRoughBounds}). Thus, $U_{y}^{A}$ is path-connected by Theorem \ref{ThmPathconstruction}.
\end{proof}

\section*{Acknowledgement}
The authors would like to thank Mikael Passare$^{\dag}$ (1959--2011) for helpful comments, in particular for his suggestion to investigate $A$-discriminants and the example presented in the beginning of Section \ref{SecBarycenter}.

Furthermore we thank an anonymous referee for numerous helpful suggestions and comments.

\bibliographystyle{amsplain}
\bibliography{bibamoebasgenus1}

\appendix
\section{Proof of Lemma~\ref{le:hypocycloid2}}

We provide the calculations for the proof of Lemma~\ref{le:hypocycloid2}.

\begin{lem}
Let $T$ denote the region whose boundary is the (rotated) hypocycloid given by $\phi \mapsto F(n,\phi)$ for $\phi \in [0,2\pi)$. Then $S \subseteq T$ and $\partial T \subseteq \partial S$.
\label{LemSsubsetT}
\end{lem}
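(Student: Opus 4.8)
The plan is to analyze the function $g\colon [0,2\pi)^n \to \fC$, $\phi \mapsto -|\Theta| \cdot \sum_{j=0}^n e^{i(\arg(b_j) + \langle \alp(j),\phi\rangle)}$, whose image is exactly $S$ by Theorem~\ref{ThmMain2}(c). Since $\sum_{j=0}^n \alp(j) = 0$ (barycentric, $y=0$) and $\alp(1),\ldots,\alp(n)$ form a basis after rescaling by the dual basis $\alp(1)^*,\ldots,\alp(n)^*$, the map $\phi \mapsto (\arg(b_j) + \langle \alp(j),\phi\rangle)_{j=1}^n$ is onto $(\fR/2\pi\fZ)^n$, and $\langle \alp(0),\phi\rangle = \sum_{j=1}^n \arg(b_j) - \sum_{j=1}^n(\arg(b_j)+\langle\alp(j),\phi\rangle)$. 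Writing $\theta_j := \arg(b_j) + \langle \alp(j),\phi\rangle$ for $j=1,\ldots,n$ as free parameters in $\fR/2\pi\fZ$, I would reduce the description of $S$ to the image of the map
\begin{eqnarray*}
	(\theta_1,\ldots,\theta_n) \ \longmapsto\ -|\Theta|\cdot\Big( e^{i(\sum_{k=1}^n\arg(b_k) - \sum_{k=1}^n\theta_k)} + \sum_{j=1}^n e^{i\theta_j}\Big),
\end{eqnarray*}
i.e. after the rotation absorbing $\sum\arg(b_k)$, the image of $(u_0,\ldots,u_n)\mapsto u_0 + \cdots + u_n$ over the circle constraint $u_j \in |\Theta|\cdot S^1$, $\prod_j (u_j/|\Theta|) = 1$. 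This is precisely the set $\{-|\Theta|(t_0+\cdots+t_n) : |t_j|=1, \prod t_j = 1\}$ generalizing the Passare–Rullg{\aa}rd example.

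For the containment $S \subseteq T$: I would show the maximum modulus of $u_0+\cdots+u_n$ subject to $|u_j| = |\Theta|$ and $\prod u_j = |\Theta|^{n+1}$ is $(n+1)|\Theta|$ (attained when all $u_j$ are equal to a common $(n+1)$-th root of $|\Theta|^{n+1}$), and more precisely that the boundary of the image is traced by configurations where all but one of the $u_j$ coincide — the ``one bead moving, $n$ beads together'' configurations already identified in Lemma~\ref{LemConstrF} via the set $M$. Concretely, by a Lagrange-multiplier / first-variation argument on $\RE(e^{-i\psi}(u_0+\cdots+u_n))$ with the constraints, a boundary point of the image must have all $u_j e^{-i\psi}$ real except possibly one; grouping the real ones (some $+|\Theta|$, some $-|\Theta|$) and the phase constraint $\prod u_j = |\Theta|^{n+1}$ forces exactly $n$ of them equal and the last one free, which is exactly the locus parametrized by $F(n,\cdot)$ (with $\mu = n$ coming from $n$ aligned beads of modulus $|\Theta|$). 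This gives $\partial S \subseteq \im F(n,\cdot) = \partial T$ and, together with connectedness of $S$ (continuous image of a torus) and the fact that $S$ contains the origin, yields $S \subseteq T$; the reverse boundary inclusion $\partial T \subseteq \partial S$ follows since $\im F(n,\cdot) \subseteq S$ by Lemma~\ref{LemConstrF}(1) while $T$ is the closed region bounded by this curve, so no point of $\partial T$ can lie in the interior of $S$ (else a neighborhood, hence part of $\partial T$'s exterior side, would also lie in $S$, contradicting that the outside of the hypocycloid meets $S^c$, which one checks at the cusps/outermost points where $|c| = (n+1)|\Theta|$ is the strict maximum of $|g|$).

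The main obstacle I expect is the variational/extremal step: rigorously proving that \emph{every} boundary point of the image of $(u_0,\dots,u_n)\mapsto\sum u_j$ under $|u_j|=|\Theta|$, $\prod u_j = |\Theta|^{n+1}$ arises from an ``$n$-aligned'' configuration, rather than merely that such configurations lie on the boundary. The clean way is: fix a direction $\psi$, maximize $h(\phi):=\RE(e^{-i\psi}g(\phi))$ over the torus; at an interior critical point $\partial h/\partial\theta_j = 0$ gives $\sin(\theta_j - \psi) = \sin(\theta_0 - \psi)$ for all $j$ (using $\partial\theta_0/\partial\theta_j = -1$), so each $\theta_j - \psi \in \{\theta_0-\psi,\ \pi - (\theta_0-\psi)\}$; this discrete dichotomy plus the single phase constraint $\sum_{j=0}^n\theta_j \equiv \sum\arg(b_k)$ pins the configuration down to the one-parameter family, and evaluating $h$ on it recovers $\RE(e^{-i\psi}F(n,\cdot))$ up to rotation, so $\max_\phi h$ equals $\max$ over the hypocycloid in direction $\psi$, i.e. the support functions of $S$ and $T$ agree. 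Since $S$ and $T$ are both compact and $T$ is the region enclosed by a (possibly non-convex) hypocycloid, I would finish by noting that agreement of the extreme points in every direction together with $S$ connected and $0 \in S$ forces $S = T$; this is the ``detailed calculation'' the paper defers to the appendix, and filling in the support-function argument for the non-convex hypocycloid region is the delicate part.
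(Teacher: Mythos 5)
Your reformulation of $S$ as the image of $(u_0,\dots,u_n)\mapsto u_0+\cdots+u_n$ subject to $|u_j|=|\Theta|$ and $\prod_j(u_j/|\Theta|)=1$, and your first-variation computation (giving $\sin(\theta_j-\psi)=\sin(\theta_0-\psi)$ for all $j$, hence $\theta_j-\psi\in\{\theta_0-\psi,\ \pi-(\theta_0-\psi)\}$) are correct and are exactly the paper's starting point. But there is a genuine gap in the step where you claim the dichotomy ``plus the single phase constraint $\prod u_j=|\Theta|^{n+1}$ forces exactly $n$ of them equal and the last one free.'' It does not: for each $s\in\{0,\dots,n\}$ you may place $s$ of the ``free'' variables in the class $\theta_0$ and $n-s$ in the class $2\psi+\pi-\theta_0$, and the phase constraint then determines a \emph{different} one-parameter family of critical configurations for each $s$. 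The corresponding critical values trace out (rotated) hypocycloids with parameters $R=(n+1)|\Theta|$ and $r'=(s+1)|\Theta|$, not just the boundary curve $F(n,\cdot)$. The paper's proof has to establish separately that each of these sub-hypocycloids is contained in $T$ (they meet $\partial T$ only at the cusps), and this is what makes the inclusion $\partial S\subseteq T$ go through.

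Your proposed way to finish --- via agreement of support functions --- cannot work, because $T$ is not convex: the support function of $T$ equals that of its convex hull (the regular $(n+1)$-gon on the cusps), so showing that the support functions of $S$ and $T$ agree only yields $\conv S=\conv T$, which does not imply $S\subseteq T$. Maxima of $\RE(e^{-i\psi}g)$ over the torus only see the outermost critical branch ($s\in\{0,n\}$), whereas $\partial S$ can and does contain critical values from the inner branches. The correct closing argument is the one the paper uses: every boundary point of $S=\im g$ is a critical value of $g$ (at a regular point $g$ is a local submersion $\fR^n\to\fR^2$, hence open, so regular values in $\im g$ are interior), all critical values lie on the family of sub-hypocycloids, all of which are contained in $T$; since $S$ is connected, $\partial S\subset T$, and $0\in S\cap T^{\circ}$, one concludes $S\subseteq T$. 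Your deduction of $\partial T\subseteq\partial S$ from $\partial T\subseteq S$ (Lemma~\ref{LemConstrF}) and $S\subseteq T$ is correct.
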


\begin{proof}
By Theorem \ref{ThmMain2},
$S$ is given by the image of the function $g: [0,2\pi)^n \ra \fC, \phi \mapsto -|\Theta| \cdot \sum_{j = 0}^n e^{i \cdot (\arg(b_j) + \langle \alp(j), \phi \rangle)}$ 
with $\alp(0) = -\sum_{j = 1}^n \alp(j)$. 
In order to show $S \subseteq T$, it suffices to show
that every critical point of $g$ is contained in $T$,
because every boundary point of $S$ is a critical point of $g$.

Once more, we use the dual basis $\alp(1)^*,\ldots,\alp(n)^*$ of $\alp(1),\ldots,\alp(n)$) again, i.e. $\phi := \sum_{j = 1}^n \phi_j \cdot \alp(j)^*$. Furthermore, we can assume 
$\arg(b_1) = \cdots = \arg(b_n) = 0$ since we can replace $\phi_j$ by $-\arg(b_j) + \phi_j$. We have
\begin{eqnarray*}
    \frac{\partial g}{\partial \phi_j}(\phi)	& =	& -|\Theta| \cdot i \cdot \lf(e^{i \cdot \phi_j} - e^{-i \cdot (\sum_{l = 1}^n \phi_l - \arg(b_0))}\ri),
\end{eqnarray*}
and thus,
\begin{eqnarray*}
\RE\lf(\frac{\partial g}{\partial \phi_j}(\phi)\ri) & = & |\Theta| \cdot \lf(\sin(\phi_j) - \sin\lf(- \sum_{l = 1}^n \phi_l + \arg(b_0)\ri)\ri) \\
  \text{ and } \IM\lf(\frac{\partial g}{\partial \phi_j}(\phi)\ri) & = & |\Theta| \cdot \lf(-\cos(\phi_j) + \cos\lf(- \sum_{l = 1}^n \phi_l + \arg(b_0)\ri)\ri).
\end{eqnarray*}

$\phi$ is a critical point of $g$ if and only if $\RE(\nabla g(\phi)) = \lam_\phi \cdot \IM(\nabla g(\phi))$ with $\lam_\phi \in \fR$, i.e., if and only if for all $j \in \{1,\ldots,n\}:$
\begin{eqnarray*}
	  \lam_\phi \cdot \cos(\phi_j)  + \sin(\phi_j)	& =	& \lam_\phi \cdot \cos\lf(- \sum_{l = 1}^n \phi_l + \arg(b_0)\ri) + \sin\lf(- \sum_{l = 1}^n \phi_l + \arg(b_0)\ri).
\end{eqnarray*}
Since the right hand term is independent of $j$, this implies
\begin{eqnarray*}
	\lam_\phi \cdot \cos(\phi_j) + \sin(\phi_j)	& =	& \lam_\phi \cdot \cos(\phi_k) + \sin(\phi_k)
\end{eqnarray*}
for all $j,k \in \{1,\ldots,n\}$. This is in particular true if $\cos(\phi_j) = \cos(\phi_k)$ and $\sin(\phi_j) = \sin(\phi_k)$, i.e., if all $e^{i \cdot \phi_j}$ have the same argument, that is, $g(\phi)$ is located on the (rotated) hypocycloid given by $F(n,\psi), \psi \in [0,2\pi)$ (see \eqref{EquSimpleFiberDescription}, Lemma \ref{LemConstrF}).

The function $h(\phi_j) := \lam_\phi \cdot \cos(\phi_j) + \sin(\phi_j)$ 
is a periodic function in the interval $[0,2\pi)$ which has a vanishing derivative exactly
at the points $\phi_j$ with $\tan(\phi_j) = 1 / \lam_\phi$.
$\tan$ is $\pi$-periodic and strictly increasing on the interval $(-\pi/2,\pi/2)$.
Therefore, for a fixed solution $\phi_n$ of $h(\phi_n)$, for every $j \in \{1, \ldots, n-1\}$
there are exactly two possibilities: either $\phi_j = \phi_n$ or $\phi_j$ is the
unique solution distinct from $\phi_n$ with $h(\phi_j) = h(\phi_n)$, and that one
coincides with $- \sum_{l = 1}^n \phi_l + \arg(b_0)$.

Thus, if $\phi$ is a critical point with $g(\phi) \notin F(n,[0,2\pi))$, then there are $\phi_j$ (we choose here $j = 1,\ldots,s$ for some $1 \le s < n-1$ since every outer monomial has the same properties) satisfying 
$\lam_\phi \cdot \cos(\phi_j) = \cos\lf(- \sum_{l = 1}^n \phi_l + \arg(b_0)\ri)$ 
and $\sin(\phi_j) = \sin\lf(- \sum_{l = 1}^n \phi_l + \arg(b_0)\ri)$, 
which means that $\arg(e^{i \cdot \phi_1}) = \cdots = \arg(e^{i \cdot \phi_s}) = \arg(e^{- i \cdot \sum_{l = 1}^n \phi_l + \arg(b_0)})$ and 
$\arg(e^{i \cdot \phi_{s+1}}) = \cdots = \arg(e^{i \cdot \phi_n})$. 
Hence, $\phi_1 = \cdots = \phi_s$, $\phi_{s+1} = \cdots = \phi_n$ and
\begin{eqnarray*}
	\phi_1	& =	& - \sum_{l = 1}^n \phi_l + \arg(b_0) 
		\ =	\ - s \cdot \phi_1 - (n-s) \cdot \phi_n + \arg(b_0) \\
		& =	& - \frac{n-s}{s+1} \cdot \phi_n + \arg(b_0).
\end{eqnarray*}
Thus, $g(\phi)$ is located on the curve given by the hypocycloid with parameters 
$R = (n+1) |\Theta|$ and $r' = (s + 1) |\Theta|$ 
rotated by $\arg(b_0)$ (see \eqref{EquHypocycloidDef}).

Since $S$ is a subset of the closed ball $\cB_{(n+1) \cdot |\Theta|}(0)$ with radius $(n+1) \cdot |\Theta|$ around the origin (Theorem \ref{ThmRoughBounds}), it is bounded and since $U^A_y$ is a closed set, we have $\partial S \subset S$. 
Since the trajectory of every hypocycloid with parameters $R = n+1$ and $r \in \{2,\ldots,n-1\}$ is a subset of $T$ (coinciding with $F(n,\psi), \psi \in [0,2\pi)$ at the cusps), we have $S \subseteq T$.

Since the sets $S$ and $T$ are closed, the statement 
$\partial T \subseteq \partial S$ follows from $S \subseteq T$ and
$\partial T \subseteq S$. The first of these conditions has just
been shown and the second one is Lemma~\ref{LemConstrF} in connection
with the definition of $T$.
\end{proof}

\medskip

\noindent
\emph{Proof of Lemma~\ref{le:hypocycloid2}.}
By Lemma \ref{LemSsubsetT} we know that $S \subseteq T$ with $\partial T \subseteq \partial S$. 
Furthermore, by~\eqref{EquSimpleFiberDescription}
the image of $F$ is contained in $S$. Hence, the 
lemma is proven if we can show that the image of $F$ equals $T$ (which is simply connected by definition).

Let $k := n - 1 + (-1)^n$. We may assume $\arg(b_1),\ldots,\arg(b_n) = 0$ again (otherwise we transform the basis of $\phi_1,\ldots,\phi_n$ as in other proofs before). $F$ satisfies an 
$(n+1)$-quasiperiodicity condition $F(\mu,j \cdot \psi) = e^{i \cdot (2\pi j) / (n+1)} \cdot F(\mu,\psi)$ 
with $\mu \in [k,n], \psi \in [0,2\pi/(n+1)], j \in \{0,\ldots,n\}$. In particular,
\begin{eqnarray}
	\lf\{F(\mu,j \cdot 2\pi/(n+1)) : \mu \in [k,n]\ri\}	& =	& \lf\{e^{i \cdot 2\pi \cdot j / (n+1)} \cdot \mu : \mu \in [-k,n]\ri\} \label{EquLemSimplyConnectedProof1}
\end{eqnarray}
for $j \in \{0,\ldots,n\}$. 

We know that the path $\gamma_n(\psi) := F(n,\psi)$ with $\psi \in [0,2\pi)$ is a hypocycloid (Lemma \ref{LemConstrF}). Let $T = T_1 \cup \cdots \cup T_{n+1}$ where 
\begin{eqnarray*}
	  T_j & :=	& T \cap \{x \in \fC : \arg(x) \in [(j-1) \cdot 2\pi / (n+1), j \cdot 2\pi / (n+1)]\}.
\end{eqnarray*}
We show that the image of $F$ equals $T$ and thus is in particular simply connected. This follows 
from the quasiperiodicity, if the image of $F(\mu,\psi)$ with $\psi \in [0,2\pi/(n+1)]$ covers $T_1$.

The path-segment $\gamma_n(\psi)$ with $\psi \in [0,2\pi / (n+1)]$ is a loop-free path,
which is injective in the argument. The path-segment $\gamma_0(\psi)$ with $\psi \in [0,2\pi / (n+1)]$ is a segment of a circle in $(T \bs T_1) \cup \partial T_1$, which is also injective in the argument. Thus, for every $\psi \in (0,2\pi / (n+1))$ the segment $\sig_\psi := [F(0,\psi),F(n,\psi)]$ intersects $\{x \in [0,n)\} \cup \{x \in e^{i \cdot 2\pi / (n+1)} \cdot [0,n)\}$ at some point $t_{\psi}$. This implies with \eqref{EquLemSimplyConnectedProof1} that $F$ covers the homotopy $H: [0,2\pi/(n+1)] \ra \{[x_1,x_2] \subset \fC\}, \psi \ra [t_\psi,\gamma_n(\psi)]$ of line segments with $H(0) = [n,n], H(2\pi/(n+1)) = e^{i \cdot 2\pi / (n+1)} \cdot [n,n]$. The image of $H$ is $T_1$. Hence, the image of $F(\mu,\psi)$ with $\psi \in [0,2\pi/(n+1)]$ covers $T_1$ and therefore the image of $F$ is $T$. Since $\im(F) \subseteq S$ and $S \subseteq T$ we have $S = T$ and thus, $S$ is simply connected.
\ifpictures
\begin{figure}[ht]
	\begin{center}
	\begin{picture}(220,200)(0,0)
		\put(0,-20){\includegraphics[width=220pt]{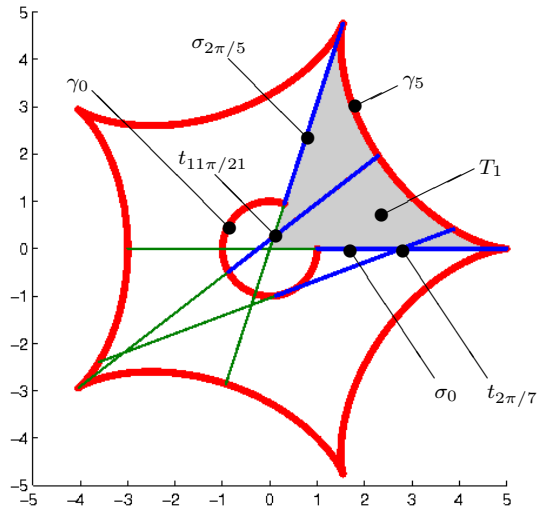}}

		\put(143,93.25){\circle*{5}}
		\put(143,93.25){\line(2,-3){32}}
		\put(175,37.5){\tiny{\mbox{$\sig_{0}$}}}
		
		\put(163,93.25){\circle*{5}}
		\put(163,93.25){\line(2,-3){32}}
		\put(195,37.5){\mbox{\tiny{$t_{2 \pi/7}$}}}
		
		\put(115,99){\circle*{5}}
		\put(115,99){\line(-1,1){23}}
		\put(78,126){\tiny{\mbox{$t_{11 \pi/21}$}}}
		
		\put(127,136){\circle*{5}}
		\put(127,136){\line(-1,1){30}}
		\put(83,170){\tiny{\mbox{$\sig_{2 \pi/5}$}}}
		
		\put(97.5,102){\circle*{5}}
		\put(97.5,102){\line(-1,1){52}}
		\put(36,155){\tiny{\mbox{$\gamma_0$}}}
		
		\put(155,107){\circle*{5}}
		\put(155,107){\line(2,1){35}}
		\put(192,122.5){\tiny{\mbox{$T_1$}}}
		
		\put(145,148){\circle*{5}}
		\put(145,148){\line(2,1){16}}
		\put(163,156){\tiny{\mbox{$\gamma_5$}}}
	\end{picture}
	\caption{Illustration of the covering of the set $S$ by the function $F$.}
	\label{FigureHomotopy}
	\end{center}
\end{figure}
\fi
\hfill $\Box$

\begin{exa}Figure \ref{FigureHomotopy} illustrates the proof of Lemma \ref{LemSimplyConnected} for the case of $f := c + \sum_{j = 1}^5 \mathbf{z}^{\alp(j)} \in \fC[z_1,\ldots,z_4]$ with $\sum_{j = 1}^5 \alp(j) = 0$. Here, $\Theta = 1$ hence $R = 5$ and $r = 1$. Due to quasiperiodicity it suffices to cover the grey region $T_1$. Of course, $\gamma_5$ is the hypocycloid with the upper values of $R$ and $r$ and $\gamma_0$ is the circle of radius $1$ around the origin. In the figure on can see the path-segments $\sig(0)$ and $\sig(\frac{2}{5} \pi)$ yielding the start- and endpoint of the homotopy $H$ (the two cusps intersecting $T_1$) and the path-segments $\sig(\frac{2}{7} \pi)$ and $\sig(\frac{11}{21} \pi)$ which yield $H(\frac{2}{7} \pi)$ and $H(\frac{11}{21} \pi)$ given by the subsegments from the point on $\gamma_5$ to $t_{2 \pi /7}$ resp. $t_{11 \pi /21}$. One can see how the complete area $T_1$ is covered by these subsegments given by $H$.
\end{exa}

\end{document}